\documentclass[reqno,a4paper,12pt]{amsart}
\usepackage[margin=3cm]{geometry}
\usepackage[T1]{fontenc}
\usepackage[utf8]{inputenc}
\usepackage{libertinus}
\usepackage{quiver}

\usepackage[stix2, upint, vvarbb]{newtxmath}
\usepackage{amsmath, mathtools, amsthm, thmtools}
\mathtoolsset{mathic}

\usepackage[dvipsnames]{xcolor}
\usepackage[]{hyperref}
\usepackage{cleveref}
\usepackage[final]{microtype}
\usepackage{csquotes}
\usepackage[english]{babel}
\usepackage{tabularx}
\usepackage{booktabs}

\usepackage[backend=biber,style=alphabetic,sorting=nyt]{biblatex}

\hypersetup{
  colorlinks=true,
  linkcolor=Mahogany,
  citecolor=Mahogany,
  urlcolor=Mahogany
}

\tikzcdset{arrow style=math font}

\DeclareFontFamily{U}{min}{}
\DeclareFontShape{U}{min}{m}{n}{<-> udmj30}{}
\newcommand\yo{\!\text{\usefont{U}{min}{m}{n}\symbol{'207}}\!}

\numberwithin{equation}{section}

\declaretheorem[name=Theorem,numberwithin=section]{theorem}
\declaretheorem[name=Lemma,sibling=theorem]{lemma}
\declaretheorem[name=Proposition,sibling=theorem]{proposition}
\declaretheorem[name=Corollary,sibling=theorem]{corollary}
\declaretheorem[name=Porism,sibling=theorem]{porism}

\declaretheorem[numbered=no,name=Proposition]{proposition*}

\declaretheorem[style=definition,sibling=theorem,name=Definition]{definition}
\declaretheorem[style=definition,sibling=theorem,name=Example]{example}
\declaretheorem[style=definition,sibling=theorem,name=Notation]{notation}
\declaretheorem[style=definition,sibling=theorem,name=Construction]{construction}
\declaretheorem[style=definition,sibling=theorem,name=Convention]{convention}

\declaretheorem[style=definition,sibling=theorem,name=Remark]{remark}

\declaretheorem[style=definition,name=Acknowledgements,numbered=no]{acknowledgements}

\makeatletter
\newenvironment{repthm}[1]{%
  \@thm{\th@definition}{}{#1}%
}{%
  \@endtheorem
}
\makeatother

\title{Dold--Kan correspondences via Morita normalization}
\author{Florian Warg}
\address{TU Dresden, Institut für Geometrie, Zellescher Weg 12--14, 01062 Dresden, Germany}
\email{florian.warg@tu-dresden.de}

\subjclass[2020]{Primary: 18G31; Secondary: 16E40, 16D90}

\newcommand{\id}{\mathrm{id}}
\newcommand{\nd}{\mathrm{nd}}

\newcommand{\bN}{\mathbb{N}}

\newcommand{\bZ}{\mathbb{Z}}

\newcommand{\cat}[1]{\mathbf{#1}}
\newcommand{\Cat}{\cat{Cat}}

\newcommand{\Set}{\cat{Set}}

\newcommand{\Mod}{\cat{Mod}}

\newcommand{\Ch}{\cat{Ch}}

\newcommand{\Du}{\cat{Du}}

\newcommand{\Ring}{\cat{Ring}}

\newcommand{\cA}{\mathscr{A}}
\newcommand{\cB}{\mathscr{B}}
\newcommand{\cC}{\mathscr{C}}
\newcommand{\cD}{\mathscr{D}}

\newcommand{\cP}{\mathscr{P}}

\newcommand{\Nor}{\mathcal{N}}
\newcommand{\N}{\mathrm{N}}

\newcommand{\op}{\mathrm{op}}

\newcommand{\iso}{\cong}

\newcommand{\la}{\langle}
\newcommand{\ra}{\rangle}
\newcommand{\eps}{\varepsilon}

\newcommand*{\noloq}{%
  \nobreak%
  \mathpunct{}%
  \nonscript%
  {:}%
  \mskip6mu
  \relax
}

\DeclareMathOperator{\Aut}{Aut}
\DeclareMathOperator{\End}{End}
\DeclareMathOperator{\im}{im}
\DeclareMathOperator{\coker}{coker}
\DeclareMathOperator{\Rev}{Rev}
\DeclareMathOperator{\Dec}{Dec}
\DeclareMathOperator{\Ob}{Ob}

\DeclareMathOperator{\Fun}{Fun}

\DeclareMathOperator{\sgn}{sgn}

\begin{document}

\begin{abstract}
    The classical Dold--Kan correspondence identifies simplicial modules with connective chain complexes.
    Using Morita theory, we construct a Dold--Kan-type correspondence for every linear category carrying a suitable action of the maximum-preserving simplicial operators.
    The construction functorially assigns a Morita-equivalent normalized category to every such linear category.
    In particular, we obtain Dold--Kan-type correspondences for all crossed simplicial groups.
    We prove a rank formula for the normalized hom-modules and show that a crossed simplicial group is self-dual precisely when its underlying simplicial set of automorphism groups has dimension one.
    Furthermore, we construct normalization-compatible self-dualities and recover the Dwyer--Kan correspondence for duplicial, paracyclic and \(N\)-cyclic modules.
\end{abstract}

\maketitle
\tableofcontents

\section{Introduction}
First proved independently by Dold and Kan in 1958, the Dold--Kan correspondence \cite{dold-58,kan58} gives an equivalence between simplicial \(k\)-modules and connective chain complexes:
\[ \Fun(\Delta^\op, k\Mod) \simeq \Ch_{\geq 0}(k\Mod). \]
It has since inspired numerous generalizations to other indexing categories and to higher categorical settings: 
Pirashvili establishes a Dold--Kan-type theorem for abelian \(\Gamma\)-groups \cite{pirashvilli-2000}, while S{\l}omi\'nska studies such correspondences from the perspective of Morita equivalences of functor categories \cite{slominska}.
Lack and Street construct Dold--Kan correspondences for categories which satisfy certain factorization conditions \cite{lack-street-15}.

Walde develops a homotopy-coherent extension \cite{walde-22} of the Lack--Street framework. Related developments include the Dold--Kan correspondence for stable \(\infty\)-categories by Lurie \cite{lurie07}.
Dyckerhoff categorifies the Dold--Kan correspondence using \(2\)-simplicial stable \(\infty\)-categories \cite{dyckerhoff21}, and Heine extends this approach to the Dwyer--Kan correspondence using \(2\)-duplicial stable \(\infty\)-categories \cite{heine23}.

Returning to the classical setting, Lyubashenko describes the Dold--Kan correspondence using a family of idempotents in the linearized simplex category \cite{lyubashenko-dk-22}.
This family is related by order-reversal to the Dold--Puppe projectors \cite{dold-puppe61}, which are used by Getzler to give a direct proof of the Dwyer--Kan theorem for paracyclic modules \cite{getzler26}. 
In this article, we combine Lyubashenko's and Getzler's insights to develop a functorial framework for constructing Dold--Kan-type correspondences.

\subsection{Main results} Here we briefly recall the necessary definitions and then state the main results of the article.
Let \(\Delta\) be the simplex category. 
We denote by \(\Delta_{\max}, \Delta_{\min} \subseteq \Delta\) the subcategories spanned by the maximum-preserving resp. minimum-preserving simplicial operators. 
These categories are isomorphic and self-dual, so one may be replaced by the other in what follows.
Furthermore, we fix a (nonzero) commutative unital ring \(k\) and refer to \(k\Mod\)-enriched categories as linear categories.
Functors and natural transformations between linear categories are assumed to be \(k\Mod\)-enriched.
If \(\cC\) is an ordinary category, then \(k[\cC]\) denotes its linearization.
Given a small linear category \(\cA\), an \(\cA\)-ring is a linear category \(\cC\) equipped with a functor \(\cA \to \cC\) which is bijective on objects.
\(\cA\)-rings form a full subcategory \(\Ring_{\cA}\) of the coslice category \(\cA \downarrow k\Cat\).

For each \(k[\Delta_{\max}]\)-ring \(\cC\) we construct a linear category \(\Nor(\cC)\).
This gives rise to a functor \(\Nor \colon \Ring_{k[\Delta_{\max}]} \to \Ring_{\Nor(\Delta_{\max})}\) which has the following properties.

\begin{repthm}{\Cref{thm:abstract-dk} \normalfont(Abstract Dold--Kan correspondence)}~
    \begin{enumerate}
        \item The categories \(\cC\) and \(\Nor(\cC)\) are Morita equivalent. In particular, the functor categories \(\Fun_k(\cC^\op, \cA)\) and \(\Fun_k(\Nor(\cC)^\op, \cA)\) are equivalent for every linear, additive and idempotent complete category \(\cA\).
        \item The functor \(\Nor \colon \Ring_{k[\Delta_{\max}]} \to \Ring_{\Nor(k[\Delta_{\max}])}\) is an equivalence.
    \end{enumerate}
\end{repthm}

As an application, we recover the Dwyer–Kan correspondences for duplicial, paracyclic and \(N\)-cyclic modules by computing their normalized indexing categories in \Cref{sec:cyclic-family}.

More generally, \Cref{thm:abstract-dk} yields a Dold--Kan-type correspondence for every crossed simplicial group \(\Delta G\).
The hom-modules of \(\Nor(\Delta G) \coloneq \Nor(k[\Delta G])\) encode information about the underlying simplicial set of automorphism groups
\[ \Delta^\op \to (\Delta G)^\op \xrightarrow{\Delta G(-, 0)} \Set \]
which we denote by \(G\).
In \Cref{thm:rank-thm} we prove a rank formula for the hom-modules based on the nondegenerate simplices of \(G\). We then characterize the crossed simplicial groups \(\Delta G\) whose normalization is self-dual.
\begin{repthm}{\Cref{thm:dim-one-classification}}
    The following are equivalent for a crossed simplicial group \(\Delta G\).
    \begin{enumerate}
        \item There exists an equivalence \((\Delta G)^\op \simeq \Delta G\).
        \item There exists a linear equivalence \(\Nor(\Delta G)^\op \simeq \Nor(\Delta G)\).
        \item The simplicial dimension of \(G\) is one.
    \end{enumerate}
\end{repthm}
\noindent Moreoever, every equivalence \((\Delta G)^\op \simeq \Delta G\) and \(\Nor(\Delta G)^\op \simeq \Nor(\Delta G)\) is necessarily an isomorphism.
This is shown in \Cref{lem:crossed-simplicial-strong-equiv} and \Cref{thm:crossed-self-strong-equiv}.

The proof of \Cref{thm:dim-one-classification} makes use of the following constructions we develop in \Cref{sec:crossed-self-duality}. 
\begin{itemize}
    \item We construct normalization-compatible self-dualities of the paracyclic and paradihedral categories using order adjoints on the integers.
    \item Using a quotient presentation, we extend this construction to all crossed simplicial groups of simplicial dimension one.
\end{itemize}

\begin{acknowledgements}
    The author would like to thank Ulrich Krähmer for many useful discussions and valuable comments on the initial draft of this article, Ivan Bartulović for encouring him to pursue the topic and Tony Zorman for enlightening discussions on promonads.
\end{acknowledgements}

\section{Preliminaries} \label{sec:prelims}
    \subsection{Simplicial constructions} \label{sec:simplicial}

    \begin{notation}
        For every natural number \(n \in \bN\), we define the set \([n] \coloneq \{ 0, 1, \dots, n \}\) and equip it with its natural order.
    \end{notation}

    \begin{definition} We define a category \(\Delta\) with the following data:
        \begin{itemize}
            \item The objects are the natural numbers \(n \in \bN\).
            \item A morphism \(f \in \Delta(m,n)\) is a weakly increasing map \(f\colon [m] \to [n]\).
        \end{itemize}

        The following morphisms of \(\Delta\) will be frequently used.
        \begin{itemize}
            \item \(\delta_i^n \in \Delta(n-1, n)\) is the unique injective morphism that misses \(i \in [n]\).
            \item \(\sigma_j^n \in \Delta(n, n-1)\) is the unique surjective morphism that hits \(j \in [n-1]\) twice.
        \end{itemize}
        We will refer to the \(\delta_i^n\) as \emph{cofaces} and to the \(\sigma^n_j\) as \emph{codegeneracies}.
    \end{definition}

     \begin{definition} \label{def:order-reversal-aut}
        We denote by \(\Rev\colon \Delta \to \Delta\) the order reversal automorphism which is the unique non-trivial auto-equivalence of \(\Delta\). It is the identity on objects and maps \(f \in \Delta(m,n)\) by \(\Rev(f)(i) = n - f(m - i)\) for all \(i \in [m]\).
    \end{definition}

    \begin{remark} \label{rem:rev-action}
        The automorphism \(\Rev\) is involutive and satisfies
        \[ \Rev(\delta^n_i) = \delta^n_{n-i} \quad \text{and} \quad \Rev(\sigma^n_j) = \sigma^n_{n-1-j}. \]
    \end{remark}

    \begin{definition}
        A simplicial object in a category \(\cC\) is a functor \(X\colon \Delta^\op \to \cC\).
    \end{definition}

    \begin{notation}
        Given a simplicial object \(X\colon \Delta^\op \to \cC\), we write \(X_n \coloneq X(n)\). Furthermore, we introduce the notation
        \(d^n_i \coloneq X(\delta^n_i)\colon X_{n} \to X_{n-1}\) and \(s^n_j \coloneq X(\sigma^n_j) \colon X_{n-1} \to X_n\).
    \end{notation}

    \begin{notation}
        The upper indices of \(\delta^n_i\) and \(\sigma^n_j\) may be omitted. The same convention applies to their image under functors.
    \end{notation}

    \begin{remark} \label{rem:delta-generators}
        The following cosimplicial relations in \(\Delta\) will be frequently used:
        \begin{equation} \label{eq:cosimplicial}
            \begin{aligned}
                \delta_i \circ \delta_j &= \delta_{j+1} \circ \delta_i, && \quad i \leq j, \\
                \sigma_j \circ \sigma_i &= \sigma_i \circ \sigma_{j+1}, && \quad  i \leq j, \\
                \sigma_j \circ \delta_i &= \delta_i \circ \sigma_{j-1}, && \quad i < j, \\
                &= \id, &&  \quad  i = j, \ i = j+1 \\
                &= \delta_{i-1} \circ \sigma_j, &&  \quad  i > j + 1.
            \end{aligned}
        \end{equation}
        We note that the \(e_j \coloneq \delta_j \circ \sigma_j \in \Delta(n,n)\) are idempotent. They satisfy
        \[ \sigma_i \circ e_j = \begin{cases}
             e_{j-1} \circ \sigma_i, & \text{if } i < j - 1 \\
             \sigma_{i+1}, & \text{if } i = j - 1 
        \end{cases} \quad \text{and} \quad e_j \circ \delta_i = \begin{cases}
            \delta_i \circ e_j, & \text{if } j < i - 1 \\
            \delta_{i-1}, & \text{if } j = i - 1
        \end{cases}. \]
    \end{remark}

    \begin{definition}
        Let \(X\colon \Delta^\op \to \Set\) be a simplicial set. 
        An element \(x \in X_n\) is \emph{degenerate}, if \(x = s_j(y)\) for some \(y \in X_{n-1}\) and \(j \in [n-1]\). 
        Otherwise \(x\) is \emph{nondegenerate}. We denote the nondegenerate elements by \(X_n^{\nd} \coloneq X_n \setminus \bigcup_{j \in [n-1]}s_j(X_{n-1})\) and define the \emph{dimension} of \(X\) by \(\dim X \coloneq \sup \big( \{ -1\} \cup \{ n \in \bN : X_n^{\nd} \neq \varnothing \} \big)\). 
    \end{definition}

    \begin{proposition} \label{prop:ez-decomp}
        Let \(X\colon \Delta^\op \to \Set\) be a simplicial set.
        Every \(x \in X_n\) factors as
        \[ x = X(\phi)(x') \]
        for a surjective \(\phi \in \Delta(n,r)\) and \(x' \in X^{\nd}_r\). Moreover, this factorization is unique.
    \end{proposition}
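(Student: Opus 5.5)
Existence follows by induction on \(n\), and uniqueness from the fact that every surjection in \(\Delta\) has a section. This is the classical Eilenberg--Zilber lemma.

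\emph{Existence.} If \(x \in X_n^{\nd}\), take \(\phi = \id\) and \(x' = x\). Otherwise \(x = s_j(y) = X(\sigma_j)(y)\) for some \(y \in X_{n-1}\). By induction on \(n\) (the case \(n=0\) is trivial, since there are no degeneracies into \(X_0\)), we may write \(y = X(\psi)(x')\) with \(\psi\) surjective and \(x'\) nondegenerate. Then \(x = X(\psi \circ \sigma_j)(x')\), and \(\psi\circ\sigma_j\) is again surjective.

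\emph{Uniqueness.} Suppose \(x = X(\phi)(x') = X(\phi')(x'')\) with \(\phi\in\Delta(n,r)\) and \(\phi'\in\Delta(n,r')\) surjective and \(x',x''\) nondegenerate. Choose a section \(\eta'\) of \(\phi'\), i.e.\ \(\phi'\circ\eta' = \id_{r'}\). Applying \(X(\eta')\) to both expressions for \(x\) gives
\[ x'' = X(\phi\circ\eta')(x'). \]
Factor the map \(\phi\circ\eta'\colon [r']\to[r]\) as a surjection followed by an injection, \(\phi\circ\eta' = \iota\circ\pi\). Then \(x'' = X(\pi)(X(\iota)(x'))\). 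If \(\pi\) were not the identity, it would be a composite of codegeneracies, so \(x''\) would lie in the image of some \(s_j\), contradicting nondegeneracy. Hence \(\pi=\id\), \(\phi\circ\eta'\) is injective, and \(r'\le r\). By symmetry \(r\le r'\). Thus \(r=r'\), and \(\phi\circ\eta'\) is an injective monotone self-map of \([r]\), hence the identity. It follows that \(x''=x'\) for every choice of section \(\eta'\) of \(\phi'\).

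It remains to show \(\phi=\phi'\). From the previous step, \(\phi\circ\eta' = \id\) for every section \(\eta'\) of \(\phi'\). Given \(i\in[n]\), choose the section \(\eta'\) of \(\phi'\) sending \(\phi'(i)\) to \(i\); this is possible because sections may pick any point of each fibre and remain monotone. Then \(\phi(i) = \phi(\eta'(\phi'(i))) = \phi'(i)\). Hence \(\phi=\phi'\).

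The only mildly delicate point is the step that \(\pi=\id\). It uses the fact from \Cref{rem:delta-generators} that every non-identity surjection in \(\Delta\) factors through some \(\sigma_j\), and that \(X(\sigma_j)=s_j\).
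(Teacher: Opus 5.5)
Your proof is correct and follows essentially the paper's route. Existence by induction on \(n\), peeling off one degeneracy at a time, is an unrolled form of the paper's choice of a minimal \(r\). Your uniqueness argument is the standard one that the paper delegates to \cite[Proposition 0014]{kerodon} rather than reproducing: sections \(\eta'\) of \(\phi'\) force \(\phi\circ\eta'\) to be injective, hence \(r=r'\) and \(\phi\circ\eta'=\id\), and then sections through arbitrary fibre points give \(\phi=\phi'\). One citation is inaccurate: \Cref{rem:delta-generators} only lists the cosimplicial relations and does not state that a non-identity surjection \(\pi\) factors as \(\pi'\circ\sigma_j\) for any \(j\) with \(\pi(j)=\pi(j+1)\), but that fact is elementary, so this is not a gap.
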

    
    \begin{proof}
        Choose \(r \in \bN\) minimal such that we can write \(x = X(\phi)(x')\) for some \(\phi \in \Delta(n,r)\) and \(x' \in X_r\).
        Minimality of \(r\) implies that \(\phi\) is surjective. (Otherwise we could replace \(\phi\) by its image.)
        It also implies that \(x' \in X^{\nd}_r\). For uniqueness we refer to \cite[Proposition 0014]{kerodon}.
    \end{proof}

    \begin{definition} \label{def:ez-decomp}
        We refer to the decomposition \(x = X(\phi)(x')\) in \Cref{prop:ez-decomp} as the \emph{Eilenberg-Zilber decomposition} or \emph{EZ-decomposition} of \(x\) and call \(x'\) the \emph{nondegenerate part} of \(x\).

    \end{definition}

     \begin{remark}
        It will be occasionally useful to regard \(\Delta\) as a full subcategory of \(\Cat\) whose objects are the poset categories \([n]\). In particular, \(\Delta\) has the structure of a strict 2-category.
    \end{remark}

    \begin{proposition} \label{prop:adjoints-in-simplex}
        Regard \(\Delta\) as a 2-category.
        \begin{itemize}
            \item A morphism \(\phi \in \Delta(m,n)\) admits a left adjoint if and only if \(\phi(m) = n\).
            \item A morphism \(\phi \in \Delta(m,n)\) admits a right adjoint if and only if \(\phi(0) = 0\). 
        \end{itemize}
    \end{proposition}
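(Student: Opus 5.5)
We work with the 2-category structure on \(\Delta\) in which \([m],[n]\) are posets regarded as categories and the 2-cells are pointwise inequalities \(f \leq g\). An adjunction \(\psi \dashv \phi\) with \(\psi\colon [n]\to[m]\) monotone is then a Galois connection:
\[ \psi(j) \leq i \iff j \leq \phi(i) \qquad \text{for all } i\in[m],\ j\in[n]. \]
The plan is to prove the first bullet directly and deduce the second from it via \(\Rev\).

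For the first bullet, necessity is quick. A right adjoint preserves all limits that exist, and the terminal object of \([m]\) is \(m\), that of \([n]\) is \(n\). So \(\phi(m)=n\). Concretely, \(\psi(n)\le m\) gives \(n\le\phi(m)\).

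For sufficiency, assume \(\phi(m)=n\) and define
\[ \psi(j)\coloneq \min\{\, i\in[m] : j\le \phi(i)\,\}. \]
The set is nonempty because \(\phi(m)=n\ge j\), so \(\psi\) is well defined. It is weakly increasing because the defining sets shrink as \(j\) grows. The Galois condition remains to be checked. If \(\psi(j)\le i\), then monotonicity of \(\phi\) gives \(j\le\phi(\psi(j))\le\phi(i)\). Conversely, if \(j\le\phi(i)\), then \(i\) lies in the set, so \(\psi(j)\le i\). Hence \(\psi\dashv\phi\), and \(\psi\) is a morphism of \(\Delta(n,m)\).

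The second bullet can be done dually: set \(\rho(j)\coloneq\max\{i : \phi(i)\le j\}\), which needs \(\phi(0)=0\) for the set to be nonempty. Alternatively, use the automorphism \(\Rev\). It is induced by conjugating with the order-reversing bijections \(i\mapsto m-i\), so as a 2-functor it reverses 2-cells and therefore swaps left and right adjoints. Moreover \(\Rev(\phi)(m)=n-\phi(0)\), so the condition \(\Rev(\phi)(m)=n\) is equivalent to \(\phi(0)=0\). Applying the first bullet to \(\Rev(\phi)\) then gives the second.

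There is no real obstacle. The only point needing care is the direction convention: checking that "\(\phi\) admits a left adjoint" corresponds to \(\phi\) being the right adjoint, and hence to the condition \(\phi(m)=n\) rather than \(\phi(0)=0\).
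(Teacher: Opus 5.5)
Your proof is correct and takes essentially the paper's route. Both arguments read the adjunction as a Galois connection and build the adjoint explicitly as an extremum of a preimage set, whose nonemptiness is exactly the endpoint condition. The paper treats the right-adjoint case directly via \(\max\{x : \phi(x) \le y\}\) and calls the left-adjoint case dual. You treat the left-adjoint case and transfer it via \(\Rev\), correctly noting that \(\Rev\) reverses 2-cells and so swaps left and right adjoints; this difference is inessential.
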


    \begin{proof}
        Let \(\phi \in \Delta(m,n)\) and \(\psi \in \Delta(n,m)\). 
        The adjunction property \(\phi \dashv \psi\) translates to \(\phi(x) \leq y \Leftrightarrow x \leq \psi(y)\) for all \(x \in [m]\) and \(y \in [n]\).
        Let \(S(y) = \{ x \in [m] : \phi(x) \leq y \}\). The adjunction requires \(\psi(y) = \max S(y)\) for all \(y \in [n]\). In particular, \(\phi(0) = 0\). 
        Conversely, \(\phi(0) = 0\) implies that all \(S(y)\) are nonempty, so \(y \mapsto \max S(y)\) is a well-defined weakly increasing map and thus the right adjoint of \(\phi\). 
        The argument for the left adjoint is dual.
    \end{proof}

    We will adopt and extend some of the notation found in \cite[Subsection 04S6]{kerodon}.

    \begin{definition} \label{def:delta-min-max}
        Let \(\Delta_{\min} \subseteq \Delta\) be the wide subcategory consisting of all minimum-preserving morphisms. 
        Similarly, we define \(\Delta_{\max} \subseteq \Delta\) as the wide subcategory consisting of all maximum-preserving morphisms.
    \end{definition}

    \begin{corollary}
        The functor \((\Delta_{\min})^\op \to \Delta_{\max}\) which maps a morphism to its right adjoint is an isomorphism. 
        Its inverse maps a morphism to its left adjoint. 
        Since \(\Rev(\Delta_{\max}) = \Delta_{\min}\), both categories are self-dual.
    \end{corollary}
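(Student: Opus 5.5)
We prove the corollary by applying \Cref{prop:adjoints-in-simplex} to define the functor, and then checking functoriality, bijectivity on hom-sets, and the behaviour of \(\Rev\).

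First we define the assignment. For \(\phi \in \Delta_{\min}(m,n)\) we have \(\phi(0)=0\), so \Cref{prop:adjoints-in-simplex} provides a right adjoint \(\psi \in \Delta(n,m)\). From the proof we know its explicit form, \(\psi(y) = \max\{x : \phi(x) \le y\}\). Evaluating at \(y=n\) gives \(\psi(n) = m\), so \(\psi\) is maximum-preserving and lies in \(\Delta_{\max}(n,m)\). We then send \(\phi\), viewed as a morphism \(n \to m\) of \((\Delta_{\min})^\op\), to \(\psi\). On objects the assignment is the identity.

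Next we check functoriality. In the \(2\)-category \(\Delta\), adjunctions compose and right adjoints are unique. Uniqueness holds because \(\Delta\) is locally a poset, so isomorphic \(1\)-cells are equal. Hence if \(\phi \dashv \psi\) and \(\phi' \dashv \psi'\), then \(\phi'\circ\phi \dashv \psi\circ\psi'\). Together with \(\id \dashv \id\), this shows that the assignment is a contravariant functor \(\Delta_{\min}\to\Delta_{\max}\), that is, a functor \((\Delta_{\min})^\op \to \Delta_{\max}\).

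We then show it is an isomorphism of categories. Dually, every \(\psi \in \Delta_{\max}(n,m)\) satisfies \(\psi(n)=m\) and so has a left adjoint \(\phi\) by \Cref{prop:adjoints-in-simplex}. By the dual formula \(\phi(x) = \min\{y : x \le \psi(y)\}\), we get \(\phi(0)=0\), so \(\phi \in \Delta_{\min}\). Because adjoints are unique, the two assignments \(\phi\mapsto\psi\) and \(\psi \mapsto \phi\) are mutually inverse on each hom-set. The inverse is again functorial by the same composition argument, so it gives the inverse functor.

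Finally we treat self-duality. By \Cref{def:order-reversal-aut}, \(\Rev(f)(m) = n - f(0)\) and \(\Rev(f)(0) = n - f(m)\). Hence \(f\) preserves the minimum if and only if \(\Rev(f)\) preserves the maximum, and vice versa. Since \(\Rev\) is an involutive automorphism, it restricts to an isomorphism \(\Delta_{\min}\iso\Delta_{\max}\). Composing this with the isomorphism above gives \((\Delta_{\min})^\op \iso \Delta_{\max} \iso \Delta_{\min}\), and similarly for \(\Delta_{\max}\). So both categories are self-dual.

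The only point needing care is that the adjoint of a minimum-preserving map is maximum-preserving and conversely. This follows at once from the explicit max/min formulas, so no real obstacle is expected.
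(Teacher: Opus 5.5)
Your proof is correct and matches the argument the paper leaves implicit: the paper states this corollary without proof, as an immediate consequence of \Cref{prop:adjoints-in-simplex}, with uniqueness and composability of adjoints in the locally posetal 2-category \(\Delta\) giving the functor and its inverse, and \(\Rev\) exchanging the conditions \(f(0)=0\) and \(f(m)=n\). Your separate functoriality check for the left-adjoint assignment is redundant, since the inverse of a functor that is bijective on objects and on hom-sets is automatically a functor.
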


    \begin{remark} \label{rem:order-adjoint-chain}
        Note that we have the following chain of adjunctions
        \[ \delta^n_n \dashv \sigma^n_{n-1} \dashv \dots \dashv \delta^n_{i+1} \dashv \sigma^n_i \dashv \delta^n_i \dashv \dots \dashv \sigma^n_0 \dashv \delta^n_0. \]
        Every \(\sigma^n_j\) belongs to both \(\Delta_{\min}\) and \(\Delta_{\max}\). 
        The same is true for the \(\delta^n_i\), except at the boundary: \(\delta^n_0\) is missing from \(\Delta_{\min}\) and \(\delta^n_n\) is missing from \(\Delta_{\max}\).
    \end{remark}
    
    \begin{definition}
        The \emph{concatenation functor} \(C_0 \colon \Delta \to \Delta_{\min}\) is the left adjoint of the inclusion \(\Delta_{\min} \hookrightarrow \Delta\). 
        On objects, we have \(C_0(n) = n + 1\) and a morphism \(\phi \in \Delta(m,n)\) is mapped to \(C_0(\phi) \in \Delta_{\min}(m+1, n+1)\) where
        \[ C_0(\phi)(x) = \begin{cases}
            0 & \text{if } x = 0, \\
            \phi(x - 1) + 1 & \text{else}.
        \end{cases} \]
        Similarly, the \emph{concatenation functor} \(C^0\colon \Delta \to \Delta_{\max}\) is the left adjoint of the inclusion \(\Delta_{\max} \hookrightarrow \Delta\). 
        On objects, we have \(C^0(n) = n + 1\) and a morphism \(\phi \in \Delta(m,n)\) is mapped to \(C^0(\phi) \in \Delta_{\max}(m+1, n+1)\) where
        \[ C^0(\phi)(x) = \begin{cases}
            n + 1 & \text{if } x = m + 1, \\
            \phi(x) & \text{else}.
        \end{cases} \]
    \end{definition}

    \begin{remark} \label{rem:decalage-monad}
        The unit of the adjunction \(C_0 \dashv (\iota\colon \Delta_{\min} \hookrightarrow \Delta)\) is the natural transformation \(\delta_{\min} \colon \id_{\Delta} \Rightarrow \iota \circ C_0\) whose components are \((\delta_{0}^{n+1})_{n \in \bN}\). 
        Similarly, the unit of the adjunction \(C^0 \dashv (\iota\colon \Delta_{\max} \hookrightarrow \Delta)\) is the natural transformation \(\delta_{\max} \colon \id_{\Delta} \Rightarrow \iota \circ C^0\) whose components are \((\delta^{n+1}_{n+1})_{n \in \bN}\). 
    \end{remark}

    \begin{definition}
        Let \(X\colon \Delta^\op \to \cC\) be a simplicial object. 
        We define the two variants of the \emph{d\'ecalage} of \(X\) by \(\Dec_0(X) \coloneq X \circ (\iota \circ C_0)^\op\) and \(\Dec^0(X) \coloneq X \circ (\iota \circ C^0)^\op\). 
        The units given in \Cref{rem:decalage-monad} induce natural transformations which we denote by
        \(d_{\min}\colon \Dec_0(X) \Rightarrow X\) and \(d_{\max}\colon \Dec^0(X) \Rightarrow X\). 
    \end{definition}

    \subsection{Linear categories and modules} \label{sec:linear-cats}

    For the remainder of the article we fix a (nonzero) commutative unital ring \(k\). We assume that the reader is familiar with the \(k\Mod\)-enriched analogs of standard categorical constructions (see, e.g.~, \cite[\nopp Sections~1 and 6]{borceux94_2} and \cite{kelly}). 

    \begin{definition}
        A \emph{linear category} \(\cA\) is a category which is \(k\Mod\)-enriched. A \emph{linear functor} \(F\colon \cA \to \cB\) is a \(k\Mod\)-enriched functor. If \(\cA\) is small, we denote by \(\Fun_k(\cA, \cB)\) the linear category of linear functors \(\cA \to \cB\) and linear natural transformations. Unless stated otherwise, functors between linear categories are implicitly assumed to be linear.
    \end{definition}

    If \(X\) is a set, then \(k[X]\) denotes the free \(k\)-module with basis \(X\). We extend this notion to categories.

    \begin{definition}
        The \emph{linearization} of a category \(\cC\) is the linear category \(k[\cC]\) whose objects are the same as those of \(\cC\) and whose hom-modules are the free \(k\)-modules \(k[\cC](x,y) = k[\cC(x,y)]\). Composition is linearly extended from \(\cC\).
    \end{definition}

    Following the perspective introduced by Mitchell \cite{mitchell72}, we regard linear categories as \(k\)-algebras with several objects. 
    From this viewpoint, many standard constructions for rings and modules extend naturally to the many-object setting.

    \begin{definition}
        A \emph{(two-sided) ideal} \(I\) of a linear category \(\cC\) is a family of \(k\)-submodules 
        \[ I(x,y) \subseteq \cC(x,y) \]
        satisfying \(I(y,z) \circ \cC(x,y) \subseteq I(x,z)\) and \(\cC(y,z) \circ I(x,y) \subseteq I(x,z)\) for all \(x,y,z \in \cC\).
        Composition in \(\cC\) descends to the quotient modules.
        We obtain a \emph{quotient category} \(\cC / I\) equipped with a canonical quotient functor \(\cC \to \cC / I\).
    \end{definition}

    \begin{definition}
        The \emph{tensor product} \(\cA \otimes_k \cB\) of linear categories is a linear category consisting of the following data.
        \begin{itemize}
            \item Objects of \(\cA \otimes_k \cB\) are pairs \((a,b)\) with \(a \in \cA\) and \(b \in \cB\).
            \item Hom-modules are \((\cA \otimes_k \cB)((a,b), (a', b')) = \cA(a, a') \otimes_k \cB(b, b')\).
        \end{itemize}
        Composition is defined in the obvious way, using the tensor product on \(k\Mod\). 
    \end{definition}

    \begin{remark}
        The tensor product of linear categories extends to a monoidal structure with tensor unit \(k\), interpreted as a linear category with one object. We will use musical notation to denote mates under the tensor-hom adjunction
        \begin{align*}
            \dfrac{F\colon \cA \otimes_k \cB \to \cC}{F^\sharp\colon \cA \to \Fun_k(\cB, \cC)}, \quad \dfrac{G\colon \cA \to \Fun_k(\cB, \cC)}{G^\flat\colon \cA \otimes_k \cB \to \cC}.
        \end{align*}
    \end{remark}

    \begin{definition}
        Let \(\cA, \cB\) be small linear categories.
        \begin{itemize}
            \item A \emph{right \(\cA\)-module} is a linear functor \(\cA^\op \to k\Mod\).
            \item A \emph{left \(\cB\)-module} is a linear functor \(\cB \to k\Mod\).
            \item An \emph{(\(\cA\),\(\cB\))-bimodule} is a linear functor \(\cA \otimes_k \cB^\op \to k\Mod\).
        \end{itemize}
        The corresponding functor categories are labeled \(\Mod_{\cA}\), \({}_{\cB}\Mod\) and \({}_{\cA}\Mod_{\cB}\). We denote the hom functors associated to an object \(a \in \cA\) by
        \[ \cA_a \coloneq \cA(a, -)\colon \cA \to k\Mod \quad \text{and} \quad \cA^a \coloneq \cA(-,a)\colon \cA^\op \to k\Mod \]
        and identify the category \(\cA\) with the bimodule \(\cA \otimes_k \cA^\op \iso \cA^\op \otimes_k \cA \xrightarrow{\cA(-,-)} k\Mod\).
    \end{definition}

    \begin{definition}
        Given a small linear category \(\cA\) we denote by \(\yo\colon \cA \to \Mod_{\cA}\) the (enriched) Yoneda embedding.
    \end{definition}

    \begin{definition}
        Let \(\cA, \cB, \cC\) be small linear categories. The \emph{tensor product} of bimodules \(M \in {}_{\cA}\Mod_{\cB}\) and \(N \in {}_{\cB}\Mod_{\cC}\) is given by the coend
        \[ M \otimes_{\cB} N \coloneq \int^{\mathrlap{b \in \cB}} M(-, b) \otimes_k N(b, -) \in {}_{\cA}\Mod_{\cC}. \]
    \end{definition}

    We assume familiarity with the calculus of (co)ends and refer to \cite{kelly} and \cite{loregian21} for a detailed treatment of the subject.

    \begin{remark}
        For each small linear category \(\cA\) we have a monoidal category of \(\cA\)-bimodules \(({}_{\cA}\Mod_{\cA}, \otimes_{\cA}, \cA)\).
        More abstractly, this is (up to variance convention) the category of enriched profunctors on \(\cA\) with monoidal structure given by profunctor composition.
    \end{remark}

    \begin{remark}
        Using the natural isomorphisms \(k \otimes_k \cA \iso \cA \iso \cA \otimes_k k\), every left or right module can be interpreted as a bimodule with a trivial \(k\)-action on the other side. Therefore, the coend above also defines the tensor product \(M \otimes_{\cB} N \in k\Mod\) for modules \(M \in \Mod_{\cB}\) and \(N \in {}_{\cB}\Mod\).
    \end{remark}

    We remark that our definition of the tensor product is (up to variance convention) the composition of linear profunctors; see e.g.~\cite[Section~5]{loregian21}.

\section{Generalized Morita theory} \label{sec:morita}
The results of this section are special cases of more general results on modules and profunctors of enriched bicategories (see e.g.~\cite{garner-shulman15}) and are included to keep the article self-contained.
For the remainder of this section, let \(\cA\) and \(\cB\) be small linear categories.

\begin{definition}
    We say that \(\cA\) and \(\cB\) are \emph{Morita equivalent}, if there exists a linear equivalence \(\Mod_{\cA} \simeq \Mod_{\cB}\).
\end{definition}

\begin{remark}
    Our definition recovers the classical notion of Morita equivalence of \(k\)-algebras. We recover a generalized Morita equivalence of rings by taking \(k = \bZ\). Note that every \(\Set\)-enriched equivalence \(\Mod_{\cA} \simeq \Mod_{\cB}\) is automatically \(\bZ\)-linear.
\end{remark}

\subsection{Adjunctions arising from bimodules}
Just as in the classical setting, we may study Morita equivalences by means of bimodules.

\begin{proposition} \label{prop:bimodule-tensor-hom}
    Every bimodule \(X \in {}_{\cA}\Mod_{\cB}\) induces a tensor-hom adjunction
    \[ (-) \otimes_{\cA} X \colon \Mod_{\cA} \leftrightarrows \Mod_{\cB} \noloq \Mod_{\cB}(X,-). \]
\end{proposition}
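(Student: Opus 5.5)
We construct the adjunction isomorphism directly from coend calculus, one representable variable at a time. First we make both functors precise.

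For \(M \in \Mod_{\cA}\), i.e.\ \(M\colon \cA^\op \to k\Mod\), we set
\[ (M \otimes_{\cA} X)(b) = \int^{a \in \cA} M(a) \otimes_k X(a,b). \]
This is contravariant in \(b\), because \(X\) is contravariant in its \(\cB\)-variable. Hence \(M \otimes_{\cA} X \in \Mod_{\cB}\), and it is functorial in \(M\) by functoriality of coends.

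For \(N \in \Mod_{\cB}\) we set
\[ \Mod_{\cB}(X,N)(a) = \Mod_{\cB}(X(a,-), N) = \int_{b \in \cB} \big[X(a,b), N(b)\big]. \]
Here \([-,-]\) denotes the internal hom of \(k\Mod\). Since \(X(a,-)\) is covariant in \(a\), this is contravariant in \(a\), so we obtain a right \(\cA\)-module. These are well-defined small (co)ends because \(\cA\) and \(\cB\) are small and \(k\Mod\) is complete and cocomplete.

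Next we compute, naturally in \(M\) and \(N\):
\begin{align*}
\Mod_{\cB}(M \otimes_{\cA} X, N)
&\iso \int_{b} \Big[ \int^{a} M(a) \otimes_k X(a,b),\, N(b) \Big] \\
&\iso \int_{b} \int_{a} \big[ M(a) \otimes_k X(a,b),\, N(b) \big] \\
&\iso \int_{a} \int_{b} \big[ M(a),\, [X(a,b), N(b)] \big] \\
&\iso \int_{a} \Big[ M(a),\, \int_{b} [X(a,b), N(b)] \Big] \\
&\iso \Mod_{\cA}\big(M, \Mod_{\cB}(X,N)\big).
\end{align*}
The steps are justified as follows.
\begin{itemize}
\item The first and last isomorphisms express the \(k\)-module of enriched natural transformations as an end (see \cite{kelly}).
\item The second uses that \([-, N(b)]\) sends colimits, in particular coends, to limits.
\item The third uses the tensor–hom adjunction in \(k\Mod\) together with Fubini for ends.
\item The fourth uses that \([M(a), -]\) preserves limits, in particular ends.
\end{itemize}
This gives an isomorphism of hom-\(k\)-modules natural in both variables, i.e.\ an enriched (linear) adjunction \((-)\otimes_{\cA} X \dashv \Mod_{\cB}(X,-)\).

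The main obstacle is bookkeeping rather than any deep step. One must check that the variances line up so that each integrand is genuinely a functor of the form \(\cC^\op \otimes_k \cC \to k\Mod\) in the variable being integrated over. One must also check that the identifications are natural in \(M\) and \(N\), which the calculus provides, and in the residual variables, as required for Fubini to apply.

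Optionally, as a sanity check, we could also exhibit the unit and counit explicitly. The unit is \(M(a) \to \int_b [X(a,b), \int^{a'} M(a')\otimes X(a',b)]\), given by the coprojections; the counit is the evaluation map. This is not needed once the natural isomorphism is established.
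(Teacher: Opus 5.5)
Your proof is correct and follows the same route as the paper. Both write the enriched homs as ends, pass the coend out of the contravariant hom, apply the tensor--hom adjunction of \(k\Mod\), and reassemble the inner end as \(\Mod_{\cB}(X,N)\); you simply spell out the continuity and Fubini steps and the variance checks that the paper's three-line chain of isomorphisms compresses.
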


\begin{proof}
    Let \(M \in \Mod_{\cA}\) and \(N \in \Mod_{\cB}\). We apply the natural isomorphisms
    \begin{align*}
        \Mod_{\cB}(M \otimes_{\cA} X, N) &\iso \textstyle\int_a \int_b k\Mod(M(a) \otimes_k X(a,b), N(b)) \\
        &\iso \textstyle\int_a \int_b k\Mod(M(a), k\Mod(X(a,b), N(b))) \\
        &\iso \Mod_{\cA}(M, \Mod_{\cB}(X, N)). \qedhere
    \end{align*}
\end{proof}

\begin{proposition}
    If \(F\colon \Mod_{\cA} \to \Mod_{\cB}\) is an equivalence, then \(F \iso (-) \otimes_{\cA} X\) where the bimodule \(X\) is given by \((F \circ \yo)^\flat\). 
\end{proposition}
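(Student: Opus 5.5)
We have to show that an equivalence F of module categories is given by tensoring with the bimodule X = (F∘\yo)^\flat, meaning X(a,b) = F(\yo a)(b). The plan is to produce a natural comparison map (-)\otimes_{\cA} X \Rightarrow F, check that it is an isomorphism on representables, and then extend to all modules. The extension uses that both functors preserve colimits and that every module is a colimit of representables.

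First we construct the comparison. For M \in \Mod_{\cA}, the enriched Yoneda lemma gives M(a) \iso \Mod_{\cA}(\yo a, M). Composing with the action of F on hom-modules yields maps
\[ M(a) \otimes_k F(\yo a)(b) \to \Mod_{\cB}(F\yo a, FM) \otimes_k F(\yo a)(b) \to FM(b), \]
where the second arrow is evaluation. These maps are dinatural in a, so they induce a map
\[ \theta_M \colon M \otimes_{\cA} X = \int^{a} M(a) \otimes_k X(a,-) \to FM, \]
which is natural in M. For M = \yo a' we use the co-Yoneda (density) isomorphism \yo a' \otimes_{\cA} X \iso X(a',-) = F(\yo a'). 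Under this identification \theta_{\yo a'} is the identity, which one checks by tracing \id_{a'} through the construction.

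Next we pass from representables to all modules. An equivalence of linear categories has a linear quasi-inverse, which is both a left and a right adjoint. Hence F preserves all small (weighted) colimits, and in particular direct sums and cokernels. The functor (-)\otimes_{\cA} X is a left adjoint by \Cref{prop:bimodule-tensor-hom}, so it also preserves colimits. Every M \in \Mod_{\cA} is canonically a coend M \iso \int^a M(a) \otimes_k \yo a. Equivalently, M has a presentation
\[ \bigoplus_{j} \yo a_j \to \bigoplus_{i} \yo a_i \to M \to 0 \]
by direct sums of representables. Since \theta is natural and an isomorphism on each \yo a, it is an isomorphism on direct sums of representables. Applying the five lemma to the two right exact rows then shows that \theta_M is an isomorphism.

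The main obstacle is bookkeeping rather than substance. One must check that \theta is natural and dinatural, and that it genuinely restricts to the co-Yoneda isomorphism on representables, so that the colimit argument applies to a natural transformation. We would rather not merely produce abstract isomorphisms objectwise. An alternative that avoids the explicit construction of \theta is to note that both functors are cocontinuous and agree, up to natural isomorphism, after restriction along \yo. The universal property of \Mod_{\cA} as the free cocompletion of \cA then gives F \iso (-)\otimes_{\cA} X directly. I would present the hands-on version and mention this as the conceptual reason.
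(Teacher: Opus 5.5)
Your proof is correct and rests on the same two ingredients as the paper's: the co-Yoneda isomorphism \(M \iso \int^a M(a) \otimes_k \yo(a)\) and the cocontinuity of the linear equivalence \(F\). Your \(\theta_M\) is, after composing with \(F\) applied to the co-Yoneda isomorphism, exactly the canonical comparison map \(\int^a M(a) \otimes_k F(\yo(a)) \to F\big(\int^a M(a) \otimes_k \yo(a)\big)\) that appears in the paper's one-line chain \(F(M) \iso F(\int^a M(a)\otimes_k \yo(a)) \iso \int^a M(a) \otimes_k F(\yo(a)) \iso M \otimes_{\cA} X\). The only difference is how invertibility is established. The paper gets it in one step, because the linear equivalence \(F\) preserves this coend, which is a weighted colimit involving the copowers \(M(a) \otimes_k -\). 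You instead check it on representables and propagate it through a free presentation with the five lemma, which needs only that both functors preserve direct sums and cokernels. Your route is more elementary and makes naturality explicit, at the cost of length. The paper's route is essentially the ``conceptual alternative'' you mention at the end (compare \Cref{lem:yoneda-dense}). Neither argument uses more about \(F\) than linearity and cocontinuity, so both actually prove the Eilenberg--Watts-type statement for any cocontinuous linear functor \(\Mod_{\cA} \to \Mod_{\cB}\).
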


\begin{proof}
    Consider the following natural isomorphisms in \(M \in \Mod_{\cA}\):
    \[
        F(M) \iso F(\textstyle\int^a M(a) \otimes_k \yo(a)) \iso \int^a M(a) \otimes_k F(\yo(a)) \iso M \otimes_{\cA} X. \qedhere
    \]
\end{proof}

We obtain the (co)extension of scalars along a functor as a special case of this adjunction.

\begin{definition}
    A functor \(F\colon \cA \to \cB\) between small linear categories yields a bimodule \(\cB(-, F(-)) \in {}_{\cA}\Mod_{\cB}\). 
    We denote the associated tensor-hom adjunction by
    \[ F_! \colon \Mod_{\cA} \leftrightarrows \Mod_{\cB} \noloq F^* \]
    The other bimodule \(\cB(F(-), -) \in {}_{\cB}\Mod_{\cA}\) induces an adjunction
    \[ F^*\colon \Mod_{\cB} \leftrightarrows \Mod_{\cA} \noloq F_*. \]
    Note that \(F^*\) is naturally isomorphic to restriction along \(F\) by the Yoneda lemma.
\end{definition}

\subsection{Tiny modules and generators}

\begin{remark} \label{rem:necessary-morita}
    If \((-) \otimes_{\cA} X\) is an equivalence, then \(\Mod_{\cB}(X, -)\) is a quasi-inverse by \Cref{prop:bimodule-tensor-hom}. In particular, \(\Mod_{\cB}(X, -)\) is cocontinuous. Since colimits in functor categories are computed pointwise, each functor \(\Mod_{\cB}(X^\sharp(a), -)\) must be cocontinuous as well. This imposes a strong smallness condition on the modules \(X^\sharp(a)\). Additionally, since \(\Mod_{\cB}(X,M) = 0\) holds precisely if \(M = 0\), and the property of being a zero object is verified pointwise, the family \((X^\sharp(a))_{a \in \cA}\) detects zero objects. These observations motivate the following definitions.
\end{remark}

\begin{definition} \label{def:tiny-generator}
    A module \(M \in \Mod_{\cA}\) is called \emph{tiny} if \(\Mod_{\cA}(M,-)\) is cocontinuous. A family of tiny modules \((M_i)_{i \in I} \subseteq \Mod_{\cA}\) is called a \emph{tiny generator} if they jointly reflect zero objects, i.e.~if \(\Mod_{\cA}(M_i, X) = 0\) for all \(i \in I\) implies \(X = 0\).
\end{definition}

\begin{remark}
    More generally, a family of modules \((M_i)_{i \in I} \subseteq \Mod_{\cA}\) is called a generator if the functors \(\Mod_{\cA}(M_i, -)\) are jointly faithful. However, if the \(M_i\) are projective, joint faithfulness is equivalent to jointly reflecting zero objects.
\end{remark}

\begin{proposition} \label{prop:characterization-tiny}
    A module in \(\Mod_{\cA}\) is tiny if and only if it is a retract of a finite direct sum of representables.
\end{proposition}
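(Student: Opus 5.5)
We prove the two directions separately, working throughout with the enriched Yoneda lemma \(\Mod_{\cA}(\yo(a), M) \iso M(a)\), natural in \(M\).

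\emph{Retracts of finite sums of representables are tiny.} For a representable \(\yo(a)\), the functor \(\Mod_{\cA}(\yo(a),-)\) is naturally isomorphic to evaluation at \(a\). Colimits in \(\Mod_{\cA}\) are computed pointwise, so evaluation is cocontinuous and \(\yo(a)\) is tiny. Next, \(\Mod_{\cA}(\bigoplus_{i=1}^n \yo(a_i), -) \iso \prod_{i=1}^n \Mod_{\cA}(\yo(a_i),-)\), and in \(k\Mod\) finite products are finite direct sums. Finite direct sums of cocontinuous functors into \(k\Mod\) are cocontinuous, because biproducts commute with colimits. Finally, suppose \(M\) is a retract of a tiny \(P\), via \(i\colon M \to P\) and \(r\colon P \to M\) with \(ri = \id\). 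Then \(\Mod_{\cA}(M,-)\) is a retract of \(\Mod_{\cA}(P,-)\) in the functor category, through the maps induced by \(r\) and \(i\). A retract of a cocontinuous functor is cocontinuous. To see this, compare the canonical map \(\operatorname{colim} \Mod_{\cA}(M, X_j) \to \Mod_{\cA}(M, \operatorname{colim} X_j)\) with the corresponding map for \(P\). It is a retract, in the arrow category, of that isomorphism, and a retract of an isomorphism is an isomorphism.

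\emph{Tiny modules are such retracts.} Let \(M\) be tiny. By the co-Yoneda lemma, \(M\) is canonically a colimit of representables, \(M \iso \int^a M(a) \otimes_k \yo(a)\). More concretely, there is an epimorphism
\[ \pi\colon \bigoplus_{a \in \cA,\, x \in M(a)} \yo(a) \to M. \]
The key step is to realize \(M\) as a \emph{filtered} colimit of finite sums. Write the large direct sum as the filtered colimit of its finite subsums \(S_F\), indexed by finite subsets \(F\) of the index set. Since \(\pi\) is surjective, \(\Mod_{\cA}(M,-)\) preserves epimorphisms. This holds because tininess gives preservation of cokernels, and \(\Mod_{\cA}(M,-)\) is additive since it is enriched. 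Hence \(\id_M\) lifts to a map \(s\colon M \to \bigoplus \yo(a)\) with \(\pi s = \id_M\). By cocontinuity, \(\Mod_{\cA}(M, \bigoplus \yo(a)) \iso \operatorname{colim}_F \Mod_{\cA}(M, S_F)\). Filtered colimits in \(k\Mod\) are computed on underlying sets, so \(s\) factors through some finite subsum \(S_F\). Therefore \(M\) is a retract of \(S_F\), a finite direct sum of representables, with the retraction given by \(\pi\) restricted to \(S_F\).

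The main obstacle is the lifting of \(\id_M\) through \(\pi\), that is, showing \(M\) is projective. Cocontinuity is a colimit condition, and epimorphisms are cokernel-type colimits, so the argument must check that preserving the cokernel of the kernel map indeed turns \(\pi_*\) into a surjection. This uses additivity of the enriched hom functor, which holds because we work in \(k\Mod\)-enriched categories. The rest is formal manipulation of pointwise colimits.
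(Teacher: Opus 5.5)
Your proof is correct and follows essentially the same route as the paper. Representables are tiny because colimits are computed pointwise, and tininess is closed under finite direct sums and retracts. Conversely, a tiny module is projective, so it splits off a large direct sum of representables, and by cocontinuity the section factors through a finite subsum.

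The differences are minor. You index the sum by elements and check surjectivity pointwise, where the paper shows \(\coker \pi = 0\) by lifting along projective representables. You also justify explicitly why tininess implies projectivity, which the paper uses without comment.
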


\begin{proof}
    Observe that all representables are tiny. Indeed, colimits of diagrams in \(\Mod_{\cA}\) are computed pointwise, so the result follows by the Yoneda lemma. 
    Clearly, tiny modules are closed under finite direct sums -- they are also closed under retracts. 
    Suppose \(M \in \Mod_{\cA}\) is a retract of a tiny module \(T \in \Mod_{\cA}\). 
    Then \(\Mod_{\cA}(M, -)\) is a retract of \(\Mod_{\cA}(T,-)\). Since cocontinuity is stable under retracts of functors, we see that \(M\) is also tiny.

    Given a tiny module \(P \in \Mod_{\cA}\), the set of morphisms from all representables to \(P\), i.e. morphisms of the form \(f\colon \cA^a \to P\) with \(a \in \cA\), induces a morphism from the direct sum \(\pi\colon \bigoplus_f \cA^{a} \to P\). We claim that \(\pi\) is an epimorphism. To this end, we will show that \(\coker \pi = 0\). Since all representable \(\cA^a\) are tiny and thus projective, any morphism \(\cA^a \to \coker \pi\) lifts along the cokernel \(P \twoheadrightarrow \coker \pi\) to some \(f\colon \cA^a \to P\). Since every such \(f\) appears in the direct sum, we have \(\Mod_{\cA}(\cA^a, \coker \pi) = 0\) for all \(a \in \cA\) and thus \(\coker \pi = 0\). Now, since \(P\) is projective, \(\pi\) admits a section. Using \(\Mod_{\cA}(P, \bigoplus_{f} \cA^a) \iso \bigoplus_{f} \Mod_{\cA}(P, \cA^a)\), we see that this section is supported only in finitely many direct summands.
\end{proof}

\begin{porism}
    In the proof above we may replace \((\cA^a)_{a \in \cA}\) by a tiny generator \((M_i)_{i \in I}\) to deduce that every tiny module is a retract of a finite direct sum of certain \(M_i\).
\end{porism}

\begin{remark}
    If \(\cA\) corresponds to a classical \(k\)-algebra, then a module is tiny, precisely if it is finitely generated projective. In classical Morita theory, it is common to use the notion of a compact projective generator. A module \(M \in \Mod_{\cA}\) is compact, if \(\Mod_{\cA}(M, -)\) preserves filtered colimits. Using that \(\Mod_{\cA}\) is abelian and cocomplete, it can be shown that \(M\) is compact and projective if and only if \(M\) is tiny.
\end{remark}

\begin{proposition} \label{prop:bimodule-induces-equivalence}
    Let \(X \in {}_{\cA}\Mod_{\cB}\) be a bimodule. The following are equivalent.
    \begin{enumerate}
        \item \((-) \otimes_{\cA} X\colon \Mod_{\cA} \to \Mod_{\cB}\) is an equivalence.
        \item \(X^\sharp \colon \cA \to \Mod_{\cB}\) is fully faithful and \((X^\sharp(a))_{a \in \cA} \subseteq \Mod_{\cB}\) is a tiny generator.
    \end{enumerate}
\end{proposition}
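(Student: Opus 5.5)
For (1) \(\Rightarrow\) (2), I would first compute the functor \((-)\otimes_{\cA} X\) on representables. By the co-Yoneda lemma, \(\yo(a) \otimes_{\cA} X \iso X(a,-) = X^\sharp(a)\), naturally in \(a\). So \(X^\sharp\) is isomorphic to the composite \(((-)\otimes_{\cA} X)\circ \yo\). This composite is fully faithful, because \(\yo\) is fully faithful and an equivalence is fully faithful. Next, an equivalence preserves tininess, since it commutes with colimits and with hom-modules up to isomorphism. The representables are tiny by \Cref{prop:characterization-tiny}, so each \(X^\sharp(a)\) is tiny. The generating property is the argument of \Cref{rem:necessary-morita}. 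Let \(G \coloneq \Mod_{\cB}(X,-)\) be the quasi-inverse. Suppose \(\Mod_{\cB}(X^\sharp(a),N)=0\) for all \(a\). Then \(G(N)(a)=0\) for all \(a\), so \(G(N)=0\), and hence \(N\iso 0\).

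For (2) \(\Rightarrow\) (1), I would prove that the unit and counit of the adjunction in \Cref{prop:bimodule-tensor-hom} are isomorphisms. First note that \(G=\Mod_{\cB}(X,-)\) is cocontinuous. Indeed, \(G(N)(a)=\Mod_{\cB}(X^\sharp(a),N)\), each \(X^\sharp(a)\) is tiny, and colimits in \(\Mod_{\cA}\) are computed pointwise. The left adjoint \(F=(-)\otimes_{\cA}X\) is cocontinuous as well.

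\emph{Unit.} For a representable \(\yo(a)\), the unit is the map
\[ \yo(a)\to G F\yo(a)\iso \Mod_{\cB}(X, X^\sharp(a)) = \Mod_{\cB}(X^\sharp(-),X^\sharp(a)). \]
Via Yoneda this is exactly the action of \(X^\sharp\) on homs \(\cA(-,a)\to \Mod_{\cB}(X^\sharp(-),X^\sharp(a))\), which is an isomorphism by full faithfulness. I would need to check this identification of the unit component carefully, but it is a routine Yoneda/triangle-identity computation. Every \(M\) is a colimit of representables, \(M\iso\int^a M(a)\otimes_k\yo(a)\), and \(GF\) is cocontinuous. Hence the unit is an isomorphism for all \(M\), and \(F\) is fully faithful.

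\emph{Counit.} Take \(N\in\Mod_{\cB}\) and the counit \(\varepsilon_N\colon FG(N)\to N\). Let \(K=\ker\varepsilon_N\) and \(C=\coker\varepsilon_N\). The triangle identity says \(G(\varepsilon_N)\circ\eta_{G N}=\id\), and \(\eta\) is an isomorphism, so \(G(\varepsilon_N)\) is an isomorphism. Each \(\Mod_{\cB}(X^\sharp(a),-)\) is exact, since tiny modules are projective and hom preserves kernels. Hence \(G\) is exact, and so \(G(K)=\ker G(\varepsilon_N)=0\) and \(G(C)=0\). Concretely, \(\Mod_{\cB}(X^\sharp(a),K)=0=\Mod_{\cB}(X^\sharp(a),C)\) for all \(a\). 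The generator hypothesis then gives \(K=C=0\), so \(\varepsilon_N\) is an isomorphism.

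I expect the main obstacle to be the bookkeeping in identifying the unit on representables with the hom-map of \(X^\sharp\). The exactness argument for the counit is the conceptual step: it needs projectivity (tininess) so that \(G\) preserves both kernels and cokernels, and I would state explicitly that tiny implies projective because \(\Mod_{\cB}(M,-)\) preserves cokernels.
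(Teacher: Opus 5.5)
Your proposal is correct and follows essentially the same route as the paper. For (1) \(\Rightarrow\) (2) you identify \(X^\sharp\) with \((-)\otimes_{\cA}X\) restricted along the Yoneda embedding and invoke \Cref{rem:necessary-morita}; for (2) \(\Rightarrow\) (1) you show the unit is invertible using full faithfulness and cocontinuity of \(\Mod_{\cB}(X,-)\), then conclude from exactness plus the generator property. Your kernel/cokernel argument for the counit just unpacks the paper's step ``exact and reflects zero objects, hence conservative.'' Your explicit identification of the unit on representables is slightly more careful than the paper, which only exhibits an abstract isomorphism \(\Mod_{\cB}(X, M\otimes_{\cA}X)\iso M\).
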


\begin{proof}
    Suppose (1) holds. We have seen in \Cref{rem:necessary-morita} that the family \((X^\sharp(a))_{a \in \cA}\) is a tiny generator. To see that \(X^\sharp\) is fully faithful, we apply the natural isomorphisms
    \begin{align*}
        \cA(a, a') \iso \Mod_{\cA}(\cA^{a}, \cA^{a'}) \iso \Mod_{\cB}(\cA^{a} \otimes_\cA X, \cA^{a'} \otimes_\cA X) \iso \Mod_{\cB}(X^\sharp(a), X^\sharp(a'))
    \end{align*}
    for \(a, a' \in \cA\). Hence (2) holds. Conversely, suppose (2) holds. Then \(\Mod_{\cB}(X, -)\) is cocontinuous. We apply the chain of natural isomorphisms in \(M \in \Mod_{\cA}\)
    \[
        \Mod_{\cB}(X, M \otimes_{\cA} X) \iso \textstyle \int^a M(a) \otimes_k \Mod_{\cB}(X, X^\sharp(a))
        \iso \textstyle \int^a M(a) \otimes_k \cA^a \iso M.
    \]
    This shows that the unit of \((-) \otimes_{\cA} X \dashv \Mod_{\cB}(X, -)\) is an isomorphism. So to conclude (1), it suffices to show that \(\Mod_{\cB}(X,-)\) is conservative. Since it is an exact functor between abelian categories, it suffices to show that it reflects zero objects. However, this is easy to see -- if \(\Mod_{\cB}(X, M) = 0\), then \(\Mod_{\cB}(X^\sharp(a), M) = 0\) for all \(a \in \cA\) and therefore the generator property implies \(M = 0\).
\end{proof}

\begin{corollary} \label{cor:morita-endo}
    Let \(P = (P_i)_{i \in I} \subseteq \Mod_{\cA}\) be a tiny generator. The full subcategory \(\End_{\cA}(P)\) of \(\Mod_{\cA}\) spanned by the objects \(P_i\) is Morita equivalent to \(\cA\). The equivalence is induced by the bimodule \((\End_{\cA}(P) \hookrightarrow \Mod_{\cA})^\flat\).
\end{corollary}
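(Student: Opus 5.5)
We apply \Cref{prop:bimodule-induces-equivalence} with \(\End_{\cA}(P)\) in the role of \(\cA\) and the original \(\cA\) in the role of \(\cB\). Let \(\cE \coloneq \End_{\cA}(P)\) and let \(J\colon \cE \hookrightarrow \Mod_{\cA}\) be the inclusion of the full subcategory. Since \(\cE\) is small (indexed by \(I\)) and \(J\) is linear, the mate \(X \coloneq J^\flat\colon \cE \otimes_k \cA^\op \to k\Mod\) is an \((\cE, \cA)\)-bimodule. Concretely, \(X(P_i, a) = P_i(a)\), and \(\cE\) acts through evaluation of module morphisms.

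By the tensor-hom adjunction, \((J^\flat)^\sharp \iso J\), so the functor \(X^\sharp\colon \cE \to \Mod_{\cA}\) is just the inclusion. We then verify condition (2) of \Cref{prop:bimodule-induces-equivalence}. First, \(X^\sharp = J\) is fully faithful because \(\cE\) is by definition a full subcategory of \(\Mod_{\cA}\). Second, the family \((X^\sharp(P_i))_{i \in I} = (P_i)_{i \in I}\) is a tiny generator by hypothesis.

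The proposition therefore gives an equivalence \((-) \otimes_{\cE} X\colon \Mod_{\cE} \to \Mod_{\cA}\). This is in particular a linear equivalence \(\Mod_{\cE} \simeq \Mod_{\cA}\), so \(\cE\) and \(\cA\) are Morita equivalent, and the equivalence is induced by the bimodule \(J^\flat\), as claimed.

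The only delicate point is bookkeeping. We must check that the identification \(X^\sharp = J\) matches the variance conventions of \Cref{prop:bimodule-induces-equivalence}. There, \(X^\sharp\) sends an object of the left-acting category to a right module over the right-acting category, and here that means \(P_i\) is sent to \(P_i\) itself. This is immediate from the definitions of \(\sharp\) and \(\flat\) as mutually inverse mates. Beyond this there is no real obstacle, since all the content sits in the earlier proposition.
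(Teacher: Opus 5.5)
Your proposal is correct and matches the paper, which states this corollary without a separate proof as an immediate consequence of \Cref{prop:bimodule-induces-equivalence}. Like the paper, you apply the implication (2)\(\Rightarrow\)(1) to the bimodule \(J^\flat\), using \((J^\flat)^\sharp = J\): this functor is fully faithful because \(\End_{\cA}(P)\) is a full subcategory, and its values form a tiny generator by hypothesis.
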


\subsection{Transport of ring structure}
If \(R\) is a (not necessarily commutative) \(k\)-algebra, then an \(R\)-ring is a monoid in the category of \(R\)-bimodules (see e.g.~\cite{bergman74} or \cite[Section~2.1]{boehm08}).
The monoid structures on an \(R\)-bimodule \(A\) are in bijective correspondence with \(k\)-algebra maps \(R \to A\).
This motivates the following definition in the many-object setting.

\begin{definition} \label{def:a-ring}
    An \emph{\(\cA\)-ring} is a functor \(u\colon \cA \to \cC\) which is bijective on objects.
    \(\cA\)-rings form a full subcategory \(\Ring_{\cA}\) of the coslice category \(\cA \downarrow k\Cat\).
    By abuse of notation, we will usually refer to \(\cC\) as the \(\cA\)-ring, assume that \(u\) is the identity on objects, and conflate \(\cA\) with \(u(\cA) \subseteq \cC\).
\end{definition}

\begin{remark}
    If \(\cA\) is a one-object category, then we recover the classical notion of an \(R\)-ring.
    It can be shown that \(\Ring_{\cA}\) is equivalent to the category of monoids in \({}_{\cA}\Mod_{\cA}\) or the category of promonads on \(\cA\) in the bicategory \(k\cat{Prof}\) of small linear categories, profunctors and natural transformations.
\end{remark}

\begin{construction} \label{cons:algebra-transport}
    Given a functor \(\Phi\colon \Mod_{\cA} \to \Mod_{\cB}\) we define the \emph{transport of structure} functor \(\mathcal{T}_\Phi \colon \Ring_{\cB} \to \Ring_{\cA}\).
    Let \(P_a = \Phi(\cA^a) \in \Mod_{\cB}\). From a \(\cB\)-ring \(u\colon \cB \to \cC\), we construct a linear category \(\cC_u\) with the following data:
    \begin{itemize}
        \item \(\Ob(\cC_u) = \Ob(\cA)\), and
        \item \(\cC_u(x, y) = \Mod_{\cC}(u_! (P_x), u_! (P_y))\) with composition of \(\Mod_{\cC}\).
    \end{itemize}
    This category comes with a functor \(u_\Phi\colon \cA \to \cC_u\) which corresponds to \(u_! \circ \Phi \circ \yo_{\cA} \).
    We define \(\mathcal{T}_{\Phi}(\cC) = \cC_u \).
    If \(v\colon \cB \to \cD\) is another \(\cB\)-ring, and \(F\colon \cC \to \cD\) is a morphism of \(\cB\)-rings, then \(F\) induces a functor \(F_\Phi\colon \cC_u \to \cC_v\) via \(\alpha \mapsto \theta \circ F_!(\alpha) \circ \theta^{-1}\) where \(\theta\colon F_! \circ u_! \Rightarrow v_!\) is the natural isomorphism induced by \(F \circ u = v\).
    We set \(\mathcal{T}_\Phi(F) = F_\Phi\).
\end{construction}

The following result is essentially the observation that the Yoneda embedding is dense.

\begin{lemma} \label{lem:yoneda-dense}
    Suppose \(F,G\colon \Mod_{\cA} \to \cC\) preserve colimits and \(\cC\) is cocomplete. Then the following are equivalent.
    \begin{enumerate}
        \item \(F\) and \(G\) are naturally isomorphic.
        \item \(F \circ \yo\) and \(G \circ \yo\) are naturally isomorphic.
    \end{enumerate}
\end{lemma}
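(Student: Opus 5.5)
The implication (1)\(\Rightarrow\)(2) is immediate: if \(\alpha\colon F \Rightarrow G\) is a natural isomorphism, then whiskering along \(\yo\) gives a natural isomorphism \(\alpha\yo\colon F\circ\yo \Rightarrow G\circ\yo\). All the work is in (2)\(\Rightarrow\)(1). The plan is to extend a given isomorphism \(\beta\colon F\circ\yo \Rightarrow G\circ\yo\) to all of \(\Mod_{\cA}\) via the density (co-Yoneda) formula, and then use that \(F\) and \(G\) preserve colimits.

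First, every module \(M\in\Mod_{\cA}\) has a canonical presentation
\[ M \iso \textstyle\int^{a\in\cA} M(a)\otimes_k \yo(a), \]
natural in \(M\). This is the enriched co-Yoneda lemma, already used in the proof that equivalences are tensoring with a bimodule. Here \(M(a)\otimes_k \yo(a)\) denotes the copower in \(\Mod_{\cA}\). Since \(F\) and \(G\) are linear and preserve colimits, they preserve copowers by \(k\)-modules and coends, which are weighted colimits built from direct sums and cokernels. This gives natural isomorphisms
\[ F(M)\iso \textstyle\int^{a} M(a)\otimes_k F(\yo a), \qquad G(M)\iso \int^{a} M(a)\otimes_k G(\yo a), \]
with the right-hand coends taken in the cocomplete category \(\cC\).

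Next, define \(\alpha_M\) as the composite
\[ F(M)\iso \textstyle\int^{a} M(a)\otimes_k F(\yo a) \xrightarrow{\int^a \id\otimes\beta_a} \int^{a} M(a)\otimes_k G(\yo a)\iso G(M). \]
The middle map is well defined because \(\beta\) is natural in \(a\), so \(\id\otimes\beta_a\) is a morphism of the coend diagrams. It is invertible with inverse induced by \(\beta^{-1}\). Naturality of \(\alpha\) in \(M\) follows because each of the three pieces is natural in \(M\): a map \(M\to M'\) acts only on the \(M(a)\) factor, which commutes with \(\id\otimes\beta_a\).

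The only delicate point is ensuring that "preserve colimits" covers the enriched copowers \(V\otimes_k -\). For linear functors this holds: present \(V\) as a cokernel of free modules \(k^{(J)}\to k^{(I)}\), so that \(V\otimes X\) is a cokernel of direct sums of copies of \(X\), and both direct sums and cokernels are preserved. I expect this to be the only step needing care; the remaining verifications are routine coend manipulations.
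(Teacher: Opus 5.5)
Your proposal is correct and is essentially the paper's argument: you write $M$ as the co-Yoneda coend, pull $F$ and $G$ inside using cocontinuity, and apply $\beta$ termwise, which is exactly the paper's one-line chain of isomorphisms. Your remark that linear cocontinuous functors preserve copowers $V \otimes_k -$ (via a presentation of $V$ by direct sums and cokernels) just makes explicit a step the paper leaves implicit.
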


\begin{proof}
    It suffices to show that (2) implies (1). The co-Yoneda lemma gives the following isomorphisms that are natural in \(M \in \Mod_{\cA}\): 
    \[
        \textstyle F(M) \iso F(\int^a M(a) \otimes_k \cA^a) \iso \int^a M(a) \otimes_k F(\cA^a) \iso \int^a M(a) \otimes_k G(\cA^a) \iso G(M). \qedhere
    \]
\end{proof}

We will now show that Morita equivalences induce equivalences on ring structures.
In bimodule language, this amounts to observing that conjugation by an invertible bimodule is a strong monoidal equivalence.

\begin{theorem} \label{thm:structure-transport}
    If \(\Phi\colon \Mod_{\cA} \to \Mod_{\cB}\) is an equivalence, then its associated transport of structure functor \(\mathcal{T}_\Phi\colon \Ring_{\cB} \to \Ring_{\cA}\) is an equivalence.
\end{theorem}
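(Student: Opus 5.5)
The plan is to build a quasi-inverse \(\mathcal{T}_{\Psi}\colon \Ring_{\cA} \to \Ring_{\cB}\) from a quasi-inverse \(\Psi\) of \(\Phi\), and then to exhibit natural isomorphisms \(\mathcal{T}_\Phi \mathcal{T}_\Psi \iso \id\) and \(\mathcal{T}_\Psi\mathcal{T}_\Phi \iso \id\) in the coslice categories. Write \(P_a = \Phi(\cA^a)\) and \(Q_b = \Psi(\cB^b)\).

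First, translate both functors into bimodule language. By the earlier proposition, \(\Phi \iso (-) \otimes_{\cA} X\) with \(X = (\Phi \circ \yo)^\flat\), and \(X^\sharp(a) = P_a\). By \Cref{prop:bimodule-induces-equivalence}, \((P_a)_{a\in\cA}\) is a tiny generator of \(\Mod_\cB\) and \(a \mapsto P_a\) is fully faithful.

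Next, fix a \(\cB\)-ring \(u\colon \cB\to\cC\). The key observation is that \(u_!\) sends a tiny generator of \(\Mod_\cB\) to a tiny generator of \(\Mod_\cC\).
\begin{itemize}
\item \(u_!\) preserves representables, \(u_!(\cB^b)\iso \cC^{u(b)}\), and it preserves finite sums and retracts. Hence each \(u_!(P_a)\) is tiny by \Cref{prop:characterization-tiny}.
\item Each \(\cB^b\) is a retract of a finite sum of the \(P_a\) (porism). Hence each \(\cC^{u(b)}\) is a retract of a finite sum of the \(u_!(P_a)\).
\item Since \(u\) is bijective on objects, every representable of \(\cC\) arises this way. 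Therefore \((u_!P_a)_a\) jointly reflects zero objects.
\end{itemize}
By \Cref{cor:morita-endo}, \(\cC_u\) is then Morita equivalent to \(\cC\), via an equivalence \(\Phi_u\colon \Mod_{\cC_u}\to\Mod_\cC\) sending \(\cC_u^{a}\) to \(u_!P_a\).

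I then compare \(\Phi_u\circ (u_\Phi)_!\) with \(u_!\circ\Phi\) as cocontinuous functors \(\Mod_\cA\to\Mod_\cC\). On representables both give \(u_!P_a\), naturally in \(a\), so \Cref{lem:yoneda-dense} yields \(\Phi_u (u_\Phi)_! \iso u_!\Phi\).

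Now apply \(\mathcal{T}_\Psi\) to the \(\cA\)-ring \(u_\Phi\colon\cA\to\cC_u\). Its hom-modules are
\[\Mod_{\cC_u}((u_\Phi)_!Q_b,(u_\Phi)_!Q_{b'}).\]
Applying the equivalence \(\Phi_u\) and the isomorphism above, this becomes
\[\Mod_\cC(u_!\Phi Q_b,u_!\Phi Q_{b'}) \iso \Mod_\cC(u_!\cB^b,u_!\cB^{b'}) \iso \cC(u b,u b'),\]
using \(\Phi\Psi\iso\id\) and Yoneda. This gives a linear isomorphism \(\mathcal{T}_\Psi\mathcal{T}_\Phi(\cC)\iso\cC\) that is identity on objects. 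One checks it is compatible with the structure maps from \(\cB\): both come from \(\yo_\cB\) pushed through the same isomorphisms. The composite \(\mathcal{T}_\Phi\mathcal{T}_\Psi\iso\id\) follows symmetrically, swapping the roles of \(\Phi\) and \(\Psi\).

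The main obstacle is naturality of these isomorphisms in morphisms of rings \(F\colon\cC\to\cD\). This requires checking that the conjugations by \(\theta\colon F_!u_!\Rightarrow v_!\) used in \Cref{cons:algebra-transport} intertwine with the comparison \(\Phi_u(u_\Phi)_!\iso u_!\Phi\). I would handle this by making every isomorphism canonical, arising from Yoneda and coend isomorphisms, so the required squares commute by naturality of those canonical maps. Alternatively, I would prove only that \(\mathcal{T}_\Phi\) is fully faithful and essentially surjective. Essential surjectivity is the construction above. Full faithfulness is checked by noting that a morphism \(\cC_u\to\cC_v\) under \(\cA\) determines \(F\) on the full subcategories of generators, and hence on representables via the retract presentations of the \(\cB^b\).
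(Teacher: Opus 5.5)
Your proposal is correct and follows the paper's proof essentially step for step. Both arguments use the quasi-inverse \(\mathcal{T}_\Psi\), the Morita equivalence \(\Phi_u\colon \Mod_{\cC_u} \simeq \Mod_{\cC}\) coming from the tiny generator \((u_!P_a)_a\), the comparison \(\Phi_u \circ (u_\Phi)_! \iso u_! \circ \Phi\) via \Cref{lem:yoneda-dense}, and the identification of the homs of \(\mathcal{T}_\Psi\mathcal{T}_\Phi(\cC)\) with \(\cC(ub,ub')\) using \(\Phi\Psi \iso \id\). The only variation is that you check the tiny-generator property through the retract characterization and the porism, whereas the paper reads it off the adjunction \(\Mod_{\cC}(u_!P_a, N) \iso \Mod_{\cB}(P_a, u^*N)\); both work, and the paper likewise asserts naturality in \(u\) and compatibility with the \(\cB\)-ring structure only ``by construction''.
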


\begin{proof}
    Choose an adjoint quasi-inverse \(\Psi\colon \Mod_{\cB} \to \Mod_{\cA}\).
    We will show that \(\mathcal{T}_\Psi\) is a quasi-inverse of \(\mathcal{T}_\Phi\).
    This is mainly a matter of book-keeping.
    Let \(u\colon \cB \to \cC\) be a \(\cB\)-ring.
    In view of \Cref{cons:algebra-transport}, we write \(P_a = \Phi(\cA^a)\) for \(a \in \cA\) and denote by
    \[ J_u\colon \cC_u \to \Mod_{\cC}, \quad a \mapsto u_!(P_a) \]
    the canonical functor which is the identity on morphisms.
    From the adjunction 
    \[ \Mod_{\cC}(u_!(P_a), N) \iso \Mod_{\cB}(P_a, u^*(N)) \]
    we deduce that the family \(u_!(P_a)\) is a tiny generator of \(\Mod_{\cC}\).
    \Cref{prop:bimodule-induces-equivalence} therefore gives an equivalence \(L_u \coloneq (-) \otimes_{\cC_u} J_u^\flat\colon \Mod_{\cC_u} \to \Mod_{\cC}\).

    Now let \(v = u_\Phi\colon \cA \to \cC_u\). 
    We claim that \(L_u \circ v_! \iso u_! \circ \Phi\).
    This is certainly true for their restrictions along the Yoneda embedding:
    \begin{align*}
        L_u(v_!(\cA^a)) \iso L_u(\cC_u^{a}) \iso J_u(a) = u_!(P_a) = u_!(\Phi(\cA^a)).
    \end{align*}
    Hence, by \Cref{lem:yoneda-dense} this extends to \(L_u \circ v_! \iso u_! \circ \Phi\).
    The hom-modules of \((\cC_u)_v\) are formed from all the morphisms between the modules \(v_!(\Psi(\cB^b))\) for \(b \in \cB\).
    Using
    \[ L_u \circ v_! \circ \Psi \circ \yo_{\cB} \iso u_! \circ \Phi \circ \Psi \circ \yo_{\cB} \iso u_! \circ \yo_{\cB} \iso \yo_{\cC} \circ u \]
    and the fact that \(L_u\) is an equivalence, we obtain a fully faithful functor
    \[ \cC \to (\cC_u)_v, \quad u(b) \mapsto b. \]
    Since \(u\) is bijective on objects, this is an isomorphism.
    By construction, it is compatible with the \(\cB\)-ring structure and is natural in \(u\).
    We conclude \(\mathcal{T}_{\Psi} \circ \mathcal{T}_{\Phi} \iso \id\).
    Exchanging the roles of \(\Phi\) and \(\Psi\) proves the claim.
\end{proof}

\subsection{Change of coefficients}
Our notion of Morita equivalence relies on categories of the form \(\Fun_k(\cA^\op, k\Mod)\).
However, the equivalence extends to functor categories where \(k\Mod\) is replaced by a suitable additive and idempotent complete category.

\begin{definition}
    Let \(\cA\) be a small linear category. We define \(\widehat{\cA}\) to be its Cauchy-completion. We will use the following explicit construction. For every idempotent morphism \(e\) on a finite direct sum of representables choose an image object \(\im e \in \Mod_{\cA}\). Then \(\widehat{\cA}\) is the full subcategory of \(\Mod_{\cA}\) spanned by the objects \(\im e\) for every \(e\).
\end{definition}

\begin{proposition}
    Let \(\cC\) be a \(k\)-linear category which is additive and idempotent complete. Restriction along the Yoneda embedding \(\cA \hookrightarrow \widehat \cA\) yields a \(k\)-linear equivalence \(\Fun_k(\widehat{\cA}^\op, \cC) \simeq \Fun_k(\cA^\op, \cC)\).
\end{proposition}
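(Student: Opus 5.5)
The plan is to construct explicit quasi-inverse functors and to reduce everything to the universal property of Cauchy completion, i.e.\ the fact that every object of \(\widehat{\cA}\) is a retract of a finite direct sum of representables. Write \(\iota\colon \cA \hookrightarrow \widehat{\cA}\) for the Yoneda embedding. This functor is fully faithful by the Yoneda lemma, since the representables \(\cA^a\) lie in \(\widehat{\cA}\) as images of identity idempotents. Restriction \(\iota^*\colon \Fun_k(\widehat{\cA}^\op, \cC) \to \Fun_k(\cA^\op, \cC)\) is clearly \(k\)-linear. It therefore suffices to show that \(\iota^*\) is fully faithful and essentially surjective.

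\textbf{Essential surjectivity.} Given \(F\colon \cA^\op \to \cC\), I first extend \(F\) to the additive hull \(\cA^\oplus\), whose objects are finite formal sums of objects of \(\cA\) and whose morphisms are matrices. On objects I send \(\bigoplus_i a_i\) to the chosen biproduct \(\bigoplus_i F(a_i)\) in \(\cC\), which exists because \(\cC\) is additive. On morphisms I apply \(F\) entrywise to the matrix. Linearity and functoriality follow from matrix multiplication. Next, an object \(\im e\) of \(\widehat{\cA}\), with \(e\) an idempotent on \(\bigoplus_i \cA^{a_i}\), corresponds by Yoneda to an idempotent matrix \(\epsilon\) in \(\cA^\oplus\). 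I set \(\widehat F(\im e) := \im F(\epsilon)\), a chosen splitting in \(\cC\), which exists by idempotent completeness. For a morphism \(f\colon \im e \to \im e'\), I use the factorization \(f = r' \circ \tilde f \circ s\) through the sums, where \(\tilde f = s' f r\) is a matrix with \(e' \tilde f e = \tilde f\). I then define \(\widehat F(f)\) as the composite of the splitting maps of \(F(\epsilon)\), \(F(\epsilon')\) and \(F(\tilde f)\) (with variance reversed). Functoriality holds because \(\tilde f\) is uniquely determined by \(f\) and \(\widetilde{gf} = \tilde g \tilde f\). If each representable is chosen as its own image, then \(\widehat F \circ \iota = F\) on the nose; otherwise the two agree up to a canonical isomorphism.

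\textbf{Full faithfulness.} Let \(\alpha\colon G\iota \Rightarrow H\iota\) be a natural transformation. It extends uniquely to direct sums, since \(G\) and \(H\), being linear, preserve finite biproducts. It then extends uniquely to retracts, because at \(\im e\) naturality with respect to the section and retraction \(s, r\) forces \(\beta_{\im e} = G(s)\,\alpha_{\oplus}\,H(r)\), up to variance. Naturality of \(\beta\) is checked by the same factorization \(f = r'\tilde f s\). This gives a bijection on hom-modules, and the bijection is \(k\)-linear.

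The main obstacle is bookkeeping rather than conceptual: one must show that \(\widehat F\) is well defined and functorial, independent of the presentation of an object as \(\im e\). This is where I would use that \(\widehat{\cA}\) consists of the chosen images, so that each object carries a fixed \(e\), and the uniqueness of \(\tilde f\). Alternatively, one can cite that \(\widehat{\cA}\) is the free additive idempotent-complete completion, i.e.\ the Cauchy completion in the sense of enriched category theory \cite{kelly}, and that \(\widehat{\cA}^\op \simeq \widehat{\cA^\op}\).
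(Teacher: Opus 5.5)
Your proposal is correct and takes essentially the same approach as the paper, whose proof only observes that a linear functor extends over finite direct sums because \(\cC\) is additive and over retracts because idempotents in \(\cC\) split, and refers to Borceux--Dejean for details. Your matrix construction of \(\widehat F\) via \(\tilde f = s' f r\) and your retract argument for full faithfulness supply exactly the bookkeeping the paper leaves implicit; with the variance fixed, the forced component is \(\beta_{\im e} = H(s)\,\alpha_{\oplus}\,G(r)\).
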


\begin{proof}

    This amounts to observing that a \(k\)-linear functor extends over finite direct sums since \(\cC\) is additive and extends over retracts since idempotents in \(\cC\) split.
    We refer to \cite{BorceuxDejean86} for more details.
\end{proof}

\begin{corollary} \label{cor:morita-coefficients}
    Suppose \(\cA\) and \(\cB\) are Morita equivalent. For every \(k\)-linear additive and idempotent complete category \(\cC\) we have an equivalence \(\Fun_k(\cA^\op, \cC) \simeq \Fun_k(\cB^\op, \cC)\). 
\end{corollary}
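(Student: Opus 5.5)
We combine the preceding proposition with the Morita equivalence, by way of Cauchy completions. Applying the proposition to \(\cA\) and to \(\cB\) gives
\[ \Fun_k(\cA^\op, \cC) \simeq \Fun_k(\widehat{\cA}^\op, \cC) \quad\text{and}\quad \Fun_k(\cB^\op, \cC) \simeq \Fun_k(\widehat{\cB}^\op, \cC). \]
It therefore suffices to produce a linear equivalence \(\widehat{\cA} \simeq \widehat{\cB}\). Precomposing with its opposite then yields the claimed equivalence of functor categories.

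To compare the two Cauchy completions, fix a linear equivalence \(\Phi\colon \Mod_{\cA} \to \Mod_{\cB}\). We show that \(\Phi\) maps \(\widehat{\cA}\) into the essential image of \(\widehat{\cB}\) in \(\Mod_{\cB}\). By \Cref{prop:characterization-tiny}, the objects of \(\widehat{\cA}\) are, up to isomorphism, exactly the tiny modules in \(\Mod_{\cA}\): each image \(\im e\) of an idempotent on a finite sum of representables is such a retract, and conversely every retract of a finite direct sum of representables is the image of the induced idempotent. Tininess is defined purely in terms of the linear category \(\Mod_{\cA}\): \(\Mod_{\cA}(M,-)\) must preserve colimits. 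An equivalence preserves colimits and hom-modules, so \(\Phi\) preserves and reflects tiny objects.

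It follows that \(\Phi\) restricts to a fully faithful functor from \(\widehat{\cA}\) to the full subcategory of tiny \(\cB\)-modules. This functor is essentially surjective onto that subcategory, because a quasi-inverse of \(\Phi\) likewise preserves tininess. The full subcategory of tiny \(\cB\)-modules is equivalent to \(\widehat{\cB}\): it is the replete closure of \(\widehat{\cB}\), again by \Cref{prop:characterization-tiny}. Composing these steps gives \(\widehat{\cA} \simeq \widehat{\cB}\).

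The main obstacle is the bookkeeping in identifying \(\widehat{\cA}\) with the tiny modules up to isomorphism. One has to check that the chosen images \(\im e\) exhaust all isomorphism classes of tiny modules, including representables, which are images of identity idempotents. Given \Cref{prop:characterization-tiny}, this is a short argument, and everything else is formal.
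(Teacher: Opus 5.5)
Your proof is correct and follows the paper's route: you pass to Cauchy completions on both sides and compare them via the Morita equivalence \(\Phi\colon \Mod_{\cA} \to \Mod_{\cB}\). The paper only asserts that \(\Phi\) ``clearly'' induces \(\widehat{\cA} \simeq \widehat{\cB}\); your argument supplies that step, namely that \(\widehat{\cA}\) is, up to isomorphism, the full subcategory of tiny modules (by \Cref{prop:characterization-tiny}) and that an equivalence and its quasi-inverse both preserve tininess.
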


\begin{proof}
    Let \(F\colon \Mod_{\cA} \to \Mod_{\cB}\) be an equivalence. Clearly, \(F\) induces an equivalence \(\widehat \cA \simeq \widehat \cB\). We conclude the desired result from the chain of equivalences
    \[ \Fun_k(\cA^\op, \cC) \simeq \Fun_k(\widehat \cA^\op, \cC) \simeq \Fun_k(\widehat \cB^\op, \cC) \simeq \Fun_k(\cB^\op, \cC). \qedhere \]
\end{proof}

\section{Morita normalization} \label{sec:morita-normalization}

\subsection{The Dold--Puppe projectors} The basis of our normalization approach is a family of idempotent projectors which can be constructed from the counits of the lower and upper d\'ecalage comonads.

\begin{construction}
    Let \(X\colon \Delta^\op \to k\Mod\) be a simplicial module. We introduce the notation \(K(X) = \ker (d_{\min}\colon \Dec_0(X) \Rightarrow X)\). For each \(n\), we have an exact sequence
\[\begin{tikzcd}[ampersand replacement=\&,cramped]
	0 \& {K(X)_n} \&\& {\Dec_0(X)_n} \&\& {X_n} \& 0
	\arrow[from=1-1, to=1-2]
	\arrow[hook, from=1-2, to=1-4]
	\arrow["{(d_{\min})_n}", from=1-4, to=1-6]
	\arrow[from=1-6, to=1-7]
\end{tikzcd}\]
which is right split by \(s_0\). 
This yields a retraction \(X_{n+1} = \Dec_0(X)_n \to K(X)_n\) which acts via \(\id - s_0 \circ d_0\). 
By iterating this construction, we obtain a chain of retractions
\[ K^0(X)_n \leftrightarrows K^1(X)_{n-1} \leftrightarrows K^2(X)_{n-2} \leftrightarrows \dots \leftrightarrows K^{n-1}(X)_1 \leftrightarrows K^n(X)_0 \]
which have the same form. Since \(\Dec_0\), and thus, \(K\) shifts the index of cofaces and codegeneracies, the resulting idempotent on \(K^0(X)_n = X_n\) is given by
\[ (\id - s_{n-1} \circ d_{n-1}) \circ \dots \circ (\id - s_1 \circ d_1) \circ (\id - s_0 \circ d_0). \]
The analogous construction with \(\Dec^0(X)\) and \(\ker(d_{\max}\colon \Dec^0(X) \Rightarrow X)\) yields another idempotent which can be identified as
\[ 
    (\id - s_0 \circ d_1) \circ (\id - s_1 \circ d_2) \circ \dots \circ (\id - s_{n-1} \circ d_n).
\]

\end{construction}
Under the Yoneda lemma, these projectors correspond to certain morphisms in \(k[\Delta]\) which we introduce below.

\begin{definition}
    For \(n \in \bN\) we define the projectors
    \begin{align*}
        p_n &\coloneq (\id - \delta_0 \circ \sigma_0) \circ (\id - \delta_1 \circ \sigma_1) \circ \dots \circ (\id - \delta_{n-1} \circ \sigma_{n-1}) \in k[\Delta_{\max}] \\
        q_n &\coloneq (\id - \delta_n \circ \sigma_{n-1}) \circ (\id - \delta_{n-1} \circ \sigma_{n-2}) \circ \dots \circ (\id - \delta_1 \circ \sigma_0) \in k[\Delta_{\min}].
    \end{align*}
    By convention, \(p_0\) and \(q_0\) are defined as \(\id_0\), corresponding to the empty composition.
\end{definition}

\begin{remark}
    The construction of \(X(q_n)\) already appears in one of the earliest publications on the Dold--Kan correspondence by Dold and Puppe \cite[\nopp Section~3.9]{dold-puppe61}.
    The truncated loop space construction which we denoted by \(K(X)\) is attributed to J. Moore. 
    It can also be found in \cite[\nopp Exercise~8.3.9]{weibel94} under the name ``brutal loop space''.
\end{remark}

\begin{remark}
    The projector families are related via \(q_n = \Rev(p_n)\) (cf.~\Cref{def:order-reversal-aut}).
\end{remark}

\begin{remark} \label{rem:projector-expansion}
    We will use the following elementary observations about the projectors. If we write \(p_n = \prod_{i=0}^{n-1} (\id - \delta_i \circ \sigma_i)\), then we may expand it in the form
    \[ 
        (\id - \delta_0 \circ \sigma_0) \circ \prod_{i=1}^{n-1} (\id - \delta_i \circ \sigma_i) = \delta_0 \circ a_0 + \prod_{i=1}^{n-1} (\id - \delta_i \circ \sigma_i) = \dots = \id + \sum_{i=0}^{n-1} \delta_i \circ a_i
    \]
    where \(a_i \in k[\Delta_{\max}](n, n-1)\). 
    Analogously, we can expand it to the form
    \[
        \prod_{i=0}^{n-2} (\id - \delta_{i} \circ \sigma_{i}) \circ (\id - \delta_{n-1} \circ \sigma_{n-1}) = b_{n-1} \circ \sigma_{n-1} + \prod_{i=0}^{n-2} (\id - \delta_i \circ \sigma_i) = \dots = \id + \sum_{i=0}^{n-1} b_i \circ \sigma_i
    \]
    where \(b_i \in k[\Delta_{\max}](n-1, n)\). By applying \(\Rev\), we obtain an analogous formula
    \[ q_n = \id + \sum_{i = 1}^n \delta_i \circ a_i' = \id + \sum_{i=0}^{n-1} b_i' \circ \sigma_i \]
    where \(a_i' \in k[\Delta_{\min}]\) and \(b_i' \in k[\Delta_{\min}]\). Note that we have shifted bounds for the cofaces.
\end{remark}
    
\subsection{Dold--Kan correspondences} \label{sub:dk}
    Throughout this section, we will fix an object 
    \[ (u\colon k[\Delta_{\max}] \to \cC) \in \Ring_{k[\Delta_{\max}]}. \]
    All results carry over to \(\Ring_{k[\Delta_{\min}]}\) via the isomorphism
    \[ (-) \circ k[\Rev]\colon \Ring_{k[\Delta_{\min}]} \to \Ring_{k[\Delta_{\max}]}. \]
    Under this identification, we replace \(p_n\) by \(\Rev(p_n) = q_n\).

    \begin{definition}
        Let \(X \in \Mod_{\cC}\). The \emph{degenerate submodule} in degree \(n \in \bN\) is defined as the \(k\)-module \(D_n(X) \coloneq \sum_{j < n} \im s^n_j \subseteq X_n\). We extend this definition to a family of functors \((D_n\colon \Mod_{\cC} \to k\Mod)_{n \in \bN}\) in the obvious way.
    \end{definition}

    \begin{proposition} \label{prop:projector-idempotent}
        The morphisms \(\id - p_n\) are idempotent and exhibit \(D_n\) as a retract of the evaluation functor \((-)_n\).
    \end{proposition}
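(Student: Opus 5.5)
The plan is to reduce everything to a single identity in \(k[\Delta_{\max}]\), namely
\[ \sigma_j \circ p_n = 0 \quad \text{for all } j \in [n-1]. \]
Once this holds, both assertions follow formally from the expansion in \Cref{rem:projector-expansion}.

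\emph{Step 1 (the key identity).} Write \(p_n = \prod_{i=0}^{n-1}(\id - e_i)\) with \(e_i = \delta_i \circ \sigma_i\). Since \(\sigma_j \circ \delta_j = \id\), we get \(\sigma_j \circ (\id - e_j) = \sigma_j - \sigma_j \circ \delta_j \circ \sigma_j = 0\). The remaining task is to move \(\sigma_j\) past the factors \((\id - e_i)\) with \(i < j\), which stand to the left of \((\id - e_j)\) in the product. For \(i < j\) the cosimplicial relations \eqref{eq:cosimplicial} give
\[ \sigma_j \circ \delta_i \circ \sigma_i = \delta_i \circ \sigma_{j-1} \circ \sigma_i = \delta_i \circ \sigma_i \circ \sigma_j . \]
Here the second equality is the relation \(\sigma_{j-1}\circ\sigma_i = \sigma_i \circ \sigma_j\), valid since \(i \le j-1\). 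Hence \(\sigma_j \circ (\id - e_i) = (\id - e_i)\circ \sigma_j\), where the \(e_i\) on the right lives in degree \(n-1\); this makes sense because \(i \le j-1 \le n-2\). Commuting \(\sigma_j\) successively through the first \(j\) factors gives
\[ \sigma_j \circ p_n = \Bigl(\prod_{i<j}(\id - e_i)\Bigr)\circ \sigma_j \circ (\id - e_j)\circ \prod_{i>j}(\id - e_i) = 0. \]
I expect the only real care needed to be this index bookkeeping: the degrees of the \(e_i\) change as \(\sigma_j\) passes through, and one must check that each relation is applied in its valid range.

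\emph{Step 2 (idempotence).} By \Cref{rem:projector-expansion}, \(p_n = \id + \sum_{i=0}^{n-1} b_i \circ \sigma_i\). Combining this with Step 1,
\[ p_n \circ p_n = p_n + \sum_i b_i \circ \sigma_i \circ p_n = p_n . \]
So \(p_n\) is idempotent, and therefore so is \(\id - p_n\). This already holds in \(k[\Delta_{\max}]\), hence in \(\cC\) via \(u\).

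\emph{Step 3 (the image is \(D_n\)).} Let \(X \in \Mod_{\cC}\). Contravariance gives \(X(\id - p_n) = -\sum_i X(\sigma_i)\circ X(b_i) = -\sum_i s_i \circ X(b_i)\), so the image of \(X(\id - p_n)\) lies in \(D_n(X)\). Conversely, Step 1 gives \(X(p_n)\circ s_j = X(\sigma_j \circ p_n) = 0\) for every \(j<n\). Hence \(X(\id - p_n)\) restricts to the identity on \(D_n(X)\), and its image is exactly \(D_n(X)\).

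Finally, the maps \(X(\id - p_n)\) are natural in \(X\), being the action of a fixed morphism of \(\cC\). Together with the inclusions \(D_n(X) \hookrightarrow X_n\), they therefore exhibit \(D_n\) as a retract of the evaluation functor \((-)_n\).
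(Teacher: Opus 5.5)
Your proof is correct and follows the paper's argument. The paper likewise combines the expansion \(p_n - \id = \sum_{i<n} b_i \circ \sigma_i\) from \Cref{rem:projector-expansion} with the identity \(\sigma_i \circ p_n = 0\), obtained from the cosimplicial relations, to deduce \(p_n \circ p_n = p_n\) and \(D_n(X) = \ker X(p_n)\); you additionally spell out the commutation \(\sigma_j \circ (\id - e_i) = (\id - e_i) \circ \sigma_j\) for \(i<j\) and the naturality of the retraction, both of which the paper leaves implicit.
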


    \begin{proof}
        By \Cref{rem:projector-expansion} we may write \(p_n - \id = \sum_{i < n} b_i \circ \sigma_i\). Using \Cref{rem:delta-generators}, we see that \(\sigma_i \circ p_n = 0\) for \(i < n\) which then implies \((p_n - \id) \circ p_n = 0\). Combining these observations, we conclude \(D_n(X) = \ker X(p_n)\) for all \(X \in \Mod_{\cC}\).
    \end{proof}

    \begin{proposition} \label{prop:projector-decomp}
        The morphism \(p_n\) induces \(\cC^n \iso P_n \oplus Q_n\) in \(\Mod_{\cC}\) where
        \[ P_n \coloneq \im \yo(p_n) = \bigcap_{i = 0}^{n-1} \ker \yo(\sigma^n_i) \quad \text{and} \quad Q_n \coloneq \ker \yo(p_n) = \sum_{i = 0}^{n-1} \im \yo(\delta^n_i). \]
    \end{proposition}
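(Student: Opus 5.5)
The plan is to work entirely inside \(\cC\) via the image of \(k[\Delta_{\max}]\), and to read off both descriptions of the summands from the two expansions of \(p_n\) in \Cref{rem:projector-expansion}. Recall that for \(f \in \cC(m,n)\) the map \(\yo(f)\colon \cC^m \to \cC^n\) is postcomposition \(g \mapsto f \circ g\). By \Cref{prop:projector-idempotent}, \(\id - p_n\) is idempotent, hence so is \(p_n\). Therefore \(\yo(p_n)\) is an idempotent endomorphism of \(\cC^n\) in the abelian category \(\Mod_{\cC}\), and it splits as \(\cC^n \iso \im \yo(p_n) \oplus \ker \yo(p_n)\). It remains to identify the two summands.

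\emph{Two annihilation identities.} First, \(\sigma_i \circ p_n = 0\) for all \(i < n\); this was already established in the proof of \Cref{prop:projector-idempotent}. Second, I will show the dual identity \(p_n \circ \delta_i = 0\) for \(i < n\). Write \(p_n = (\id - e_0) \circ \dots \circ (\id - e_{n-1})\) with \(e_j = \delta_j \circ \sigma_j\), and move \(\delta_i\) leftwards through the factors. For \(j > i\), the cosimplicial relations give
\[ e_j \circ \delta_i = \delta_j \circ \delta_i \circ \sigma_{j-1} = \delta_i \circ \delta_{j-1} \circ \sigma_{j-1} = \delta_i \circ e_{j-1}. \]
Hence \((\id - e_j) \circ \delta_i = \delta_i \circ (\id - e_{j-1})\), so \(\delta_i\) passes through every factor with index \(j > i\). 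It then meets the factor \(\id - e_i\), and \((\id - e_i) \circ \delta_i = \delta_i - \delta_i \circ \sigma_i \circ \delta_i = 0\) because \(\sigma_i \circ \delta_i = \id\). All morphisms involved lie in \(\Delta_{\max}\), since \(i < n\), so the computation is valid in \(\cC\). I expect this commutation bookkeeping to be the only real obstacle, and it reduces to the relations in \Cref{rem:delta-generators}.

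\emph{Image.} Since \(\sigma_i \circ p_n = 0\), we get \(\im \yo(p_n) \subseteq \bigcap_i \ker \yo(\sigma_i)\). Conversely, suppose \(g\) satisfies \(\sigma_i \circ g = 0\) for all \(i < n\). The expansion \(p_n = \id + \sum_i b_i \circ \sigma_i\) then gives \(p_n \circ g = g\), so \(g\) lies in the image.

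\emph{Kernel.} Since \(p_n \circ \delta_i = 0\), we get \(\sum_i \im \yo(\delta_i) \subseteq \ker \yo(p_n)\). Conversely, suppose \(p_n \circ g = 0\). The expansion \(p_n = \id + \sum_i \delta_i \circ a_i\) gives \(g = g - p_n \circ g = -\sum_i \delta_i \circ (a_i \circ g)\), which lies in \(\sum_i \im \yo(\delta_i)\).

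All of these are objectwise statements about submodules of \(\cC^n\), so they hold in \(\Mod_{\cC}\). This yields \(\cC^n \iso P_n \oplus Q_n\) with the stated descriptions of \(P_n\) and \(Q_n\).
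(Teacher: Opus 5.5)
Your proof is correct and follows the paper's argument. It uses the same two expansions of \(p_n\) from \Cref{rem:projector-expansion}, the same annihilation identities \(\sigma_i \circ p_n = 0\) and \(p_n \circ \delta_i = 0\) for \(i < n\), and the same pair of inclusions for the image and the kernel. The only difference is that you verify \(p_n \circ \delta_i = 0\) explicitly, via \(e_j \circ \delta_i = \delta_i \circ e_{j-1}\) for \(j > i\) and \((\id - e_i) \circ \delta_i = 0\), where the paper simply cites the cosimplicial relations.
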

    
    \begin{proof}
        The proof of \Cref{prop:projector-idempotent} already shows \(\im \yo(p_n) \subseteq \bigcap_{i < n} \ker \yo(\sigma_i) \). Conversely, if \(\sigma_i \circ x = 0\) for all \(i < n\), then \(p_n \circ x = x\), so \(\im \yo(p_n) = \bigcap_{i < n} \ker \yo(\sigma_i) \) as claimed. 
        The characterization of \(\ker p_n\) is dual. By \Cref{rem:projector-expansion} we may write \(p_n - \id = \sum_{i < n} \delta_i \circ a_i\) and using \Cref{rem:delta-generators} we deduce \(p_n \circ \delta_i = 0\) for \(i < n\). Since \(\ker \yo(p_n) = \im \yo(p_n - \id)\), the result follows.
    \end{proof}

    \begin{lemma} \label{lem:dold-kan-tiny-generator}
        The family \((P_n)_{n \in \bN}\) is a tiny generator of \(\Mod_{\cC}\).
    \end{lemma}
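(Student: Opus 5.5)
Tininess is immediate. Each \(P_n = \im \yo(p_n)\) is a retract of the representable \(\cC^n\) by \Cref{prop:projector-decomp}, hence tiny by \Cref{prop:characterization-tiny}. The real content is the generator property. By the Yoneda lemma, \(\Mod_{\cC}(P_n, X) \iso \im X(p_n) \subseteq X_n\). Indeed, maps out of the image of an idempotent on a representable correspond to the image of that idempotent acting on \(X_n\). By \Cref{prop:projector-idempotent} this is a complement of \(D_n(X)\), so \(X_n \iso X(p_n)(X_n) \oplus D_n(X)\). We must therefore show: if \(X(p_n)(X_n)=0\) for all \(n\), then \(X=0\). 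Equivalently, \(X_n = D_n(X)\) for all \(n\) forces \(X_n = 0\).

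We argue by induction on \(n\). For \(n=0\), \(p_0=\id\), so \(X_0 = X(p_0)(X_0) = 0\). Now suppose \(X_m=0\) for all \(m<n\). Since \(D_n(X) = \sum_{j<n} \im s_j^n\) and each \(s_j^n\) has source \(X_{n-1}=0\), we get \(D_n(X)=0\). Hence \(X_n = X(p_n)(X_n) \oplus 0 = 0\). The key point is that the codegeneracies \(\sigma_j^n\) lie in \(\Delta_{\max}\) by \Cref{rem:order-adjoint-chain}, so \(s_j^n = X(u(\sigma_j^n))\) makes sense for a \(\cC\)-module \(X\). Only the \(k[\Delta_{\max}]\)-structure through \(u\) is used, and no morphisms of \(\cC\) beyond it are needed. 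As \(\cC\) is bijective on objects over \(k[\Delta_{\max}]\), its objects are exactly the \(n\in\bN\), and so \(X=0\).

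The only step needing care is the identification \(\Mod_{\cC}(P_n,X)\iso X(p_n)(X_n)\) together with the direct-sum decomposition. I would justify it from the splitting \(\cC^n \iso P_n\oplus Q_n\): applying \(\Mod_{\cC}(-,X)\) gives \(X_n \iso \Mod_{\cC}(P_n,X)\oplus\Mod_{\cC}(Q_n,X)\), where the first summand is the image of precomposition with \(p_n\), i.e.\ of \(X(p_n)\). Its complementary summand is \(\ker X(p_n) = D_n(X)\) by \Cref{prop:projector-idempotent}. Everything else is routine.
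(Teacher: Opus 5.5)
Your proposal is correct and is essentially the paper's proof: \(P_n\) is tiny because it is a retract of \(\cC^n\), and the generator property follows from \(\Mod_{\cC}(P_n,X)\iso X_n/D_n(X)\) together with the fact that \(D_n(X)=0\) once \(X_{n-1}=0\). Your induction on \(n\) is simply the contrapositive form of the paper's argument, which picks \(n\) minimal with \(X_n\neq 0\).
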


    \begin{proof}
        Each \(P_n\) is a retract of a representable and thus tiny (cf.~\Cref{prop:characterization-tiny}). For \(X \in \Mod_{\cC}\) we have \(\Mod_{\cC}(P_n, X) \iso X_n / D_n(X)\). If \(X \neq 0\), let \(n \in \bN\) be minimal such that \(X_n \neq 0\). Then \(D_n(X) = 0\) and thus \(\Mod_{\cC}(P_n, X) \neq 0\) which shows that \((P_n)\) is a generator.
    \end{proof}

    \begin{definition}
        The \emph{normalization} of \(\cC\) is the linear category \(\Nor(\cC)\) where
        \begin{itemize}
            \item \(\Ob \Nor(\cC) = \bN\), and
            \item \(\Nor(\cC)(m, n) = \Mod_{\cC}(P_{u(m)}, P_{u(n)})\) with composition as in \(\Mod_{\cC}\).
        \end{itemize}
        If \(\cC = k[\cD]\) is the linearization of a small category \(\cD\), we write \(\Nor(\cD) \coloneq \Nor(\cC)\).
        In the case \(u = \id_{k[\Delta_{\max}]}\), we obtain a canonical embedding \(J\colon \Nor(\Delta_{\max}) \hookrightarrow \Mod_{k[\Delta_{\max}]}\).
        We denote the transport of structure functor associated to \(\Phi = (-) \otimes_{\Nor(\Delta_{\max})} J^\flat\) by
        \[ \Nor \coloneq \mathcal{T}_{\Phi} \colon \Ring_{k[\Delta_{\max}]} \to \Ring_{\Nor(\Delta_{\max})}. \]
        Extension of scalars sends the image of \(p_n\) to the image of \(u(p_n)\).
        This identifies the transported category with \(\Nor(\cC)\) as defined above.
    \end{definition}

    \begin{theorem}[Abstract Dold--Kan correspondence] \label{thm:abstract-dk}~
        \begin{enumerate}
            \item The categories \(\cC\) and \(\Nor(\cC)\) are Morita equivalent.
            In particular, the functor categories \(\Fun_k(\cC^\op, \cA)\) and \(\Fun_k(\Nor(\cC)^\op, \cA)\) are equivalent for every linear, additive and idempotent complete category \(\cA\).
            \item The functor \(\Nor \colon \Ring_{k[\Delta_{\max}]} \to \Ring_{\Nor(\Delta_{\max})}\) is an equivalence.
        \end{enumerate}
    \end{theorem}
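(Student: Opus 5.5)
Part (1) follows from \Cref{cor:morita-endo} applied to the family \(P = (P_{u(n)})_{n \in \bN} \subseteq \Mod_{\cC}\), where \(P_{u(n)} = \im \yo(u(p_n))\) is the image of the idempotent \(u(p_n)\) on the representable \(\cC^{n}\).
\begin{itemize}
\item \emph{Tiny generator.} \Cref{lem:dold-kan-tiny-generator} shows that \(P\) is a tiny generator. That lemma is stated for the fixed \(k[\Delta_{\max}]\)-ring \(\cC\), since the degeneracies \(s_j\) act on any \(X \in \Mod_{\cC}\) through \(u\).
\item \emph{Identification with the endomorphism category.} By definition, the full subcategory \(\End_{\cC}(P)\) spanned by the \(P_{u(n)}\) is exactly \(\Nor(\cC)\). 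Hence \(\cC\) and \(\Nor(\cC)\) are Morita equivalent, with the equivalence induced by the bimodule \(J_u^\flat\), where \(J_u\colon \Nor(\cC) \hookrightarrow \Mod_{\cC}\).
\item \emph{Other coefficients.} \Cref{cor:morita-coefficients} gives \(\Fun_k(\cC^\op, \cA) \simeq \Fun_k(\Nor(\cC)^\op, \cA)\) for every linear, additive, idempotent complete \(\cA\).
\end{itemize}

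Part (2) begins with the special case \(u = \id_{k[\Delta_{\max}]}\). Part (1) then says that \(\Phi = (-) \otimes_{\Nor(\Delta_{\max})} J^\flat\colon \Mod_{\Nor(\Delta_{\max})} \to \Mod_{k[\Delta_{\max}]}\) is an equivalence, by \Cref{prop:bimodule-induces-equivalence}: \(J\) is fully faithful by construction and its image is a tiny generator. \Cref{thm:structure-transport} then shows that \(\mathcal{T}_\Phi\colon \Ring_{k[\Delta_{\max}]} \to \Ring_{\Nor(\Delta_{\max})}\) is an equivalence. Since \(\Nor\) is defined as \(\mathcal{T}_\Phi\), this proves (2).

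The remaining task is to check that the transported category \(\mathcal{T}_\Phi(\cC)\) agrees with \(\Nor(\cC)\) as defined explicitly, so that the two parts are consistent. First, \(\Phi(\Nor(\Delta_{\max})^n) \iso J(n) = P_n\). Second, \(u_!\) is additive and preserves images of idempotents, being a left adjoint and a functor, and \(u_!\) sends representables to representables, \(u_!(k[\Delta_{\max}]^n) \iso \cC^{n}\). Together these give \(u_!(P_n) = u_!(\im \yo(p_n)) \iso \im \yo_{\cC}(u(p_n)) = P_{u(n)}\). Therefore the hom-modules \(\Mod_{\cC}(u_!P_m, u_!P_n)\) of \(\mathcal{T}_\Phi(\cC)\) are identified with \(\Nor(\cC)(m,n)\), compatibly with composition and with the structure functor from \(\Nor(\Delta_{\max})\).

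I expect this identification to be the main obstacle, although it is bookkeeping rather than a conceptual step. The isomorphisms \(u_!(P_n) \iso P_{u(n)}\) must be chosen naturally in \(u\), for instance by transporting along the canonical isomorphism \(u_! \circ \yo \iso \yo \circ u\). Otherwise the comparison of functors on morphisms of rings will not commute with the maps \(F_\Phi\) of \Cref{cons:algebra-transport}.
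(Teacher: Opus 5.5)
Your proposal is correct and follows the paper's proof. Part (1) is \Cref{prop:bimodule-induces-equivalence} (which you invoke in its packaged form \Cref{cor:morita-endo}) combined with \Cref{lem:dold-kan-tiny-generator} and \Cref{cor:morita-coefficients}, and part (2) is \Cref{thm:structure-transport} applied to \(\Phi = (-) \otimes_{\Nor(\Delta_{\max})} J^\flat\). Your check that \(u_!(P_n) \iso P_{u(n)}\) is the one-line remark the paper makes in its definition of \(\Nor\); it is not actually needed for (2), since \(\Nor\) is defined as \(\mathcal{T}_\Phi\).
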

    
    \begin{proof}
        (1) follows from \Cref{prop:bimodule-induces-equivalence} and \Cref{lem:dold-kan-tiny-generator} applied to the bimodule 
        \[ (\Nor(\cC) \to \Mod_{\cC})^\flat \in {}_{\Nor(\cC)} \Mod_{\cC}, \]
        and then applying \Cref{cor:morita-coefficients}. (2) follows by \Cref{thm:structure-transport}.
    \end{proof}

    \begin{construction}
        We give a description of \(\Nor(\cC)\) via quotients.
        The Yoneda embedding \(\cC \to \Mod_{\cC}\) gives a natural isomorphism of \(k\)-modules
        \[ p_n \circ \cC(m,n) \circ p_m \xrightarrow{ \ \sim \ } \Nor(\cC)(m,n). \]
        Denote by \(L \coloneq p_n \circ (-)\) and \(R \coloneq (-) \circ p_m\) the endomorphisms of \(\cC(m,n)\) that compose with the projector. \Cref{prop:projector-decomp} shows \(\ker L = \sum_{i=0}^{n-1} \delta_i \circ \cC(m, n-1)\). 
        The proof of \Cref{prop:projector-idempotent} shows \(\ker R = \sum_{j=0}^{m-1} \cC(m-1,n) \circ \sigma_j\).
        Since \(R \circ L = L \circ R\), we conclude
        \[ 
            \Nor(\cC)(m,n) \iso \cC(m,n) / (\ker L + \ker R).
        \]
        Under this identification, composition of the classes of \(f \in \cC(m,r)\) and \(g \in \cC(r,n)\) is given by \([g] \circ [f] = [g \circ p_r \circ f]\).

        Even more is true if \(u\colon k[\Delta_{\max}] \to \cC\) is the linearization of a functor \(\Delta_{\max} \to \cD\).
        In this case, \(\ker L\) and \(\ker R\) are spanned by subsets of \(\cD(m,n)\),
        and thus \(\Nor(\cD)(m,n)\) is a free module.
        A basis of the quotient is given by the non-vanishing classes of \(f \in \cD(m,n)\).
        Note that \([f] = 0\) holds precisely if
        \begin{itemize}
            \item \(f = \delta_i \circ f'\) for some \(i < n\), or
            \item \(f = f' \circ \sigma_j\) for some \(j < m\).
        \end{itemize}

    \end{construction}

    \begin{construction} \label{construction:normalized-module}
        We give a description of the equivalence \(\Mod_{\cC} \to \Mod_{\Nor(\cC)}\).
        The right-adjoint of the tensor-hom adjunction associated to the bimodule \((\Nor(\cC) \to \Mod_{\cC})^\flat\) maps \(X \in \Mod_{\cC}\) to a module \(\N(X) \in \Mod_{\Nor(\cC)}\) with components
        \[ \N(X)_n = \Mod_{\cC}(P_n, X) \iso X_n / D_n(X). \]
        Under this identification, the right action of \(\Nor(\cC)\) is given as follows. For \(x \in X_n\) and \(f \in \cC(m,n)\) we have \([x] \triangleleft [f] = [x \triangleleft (p_n \circ f)]\).
    \end{construction}

    \begin{example}[The trivial case] \label{ex:trivial}
        We consider \(u = \id_{k[\Delta_{\max}]}\).
        Given a morphism \(\phi \in \Delta_{\max}(m,n)\), we have \([\phi] \neq 0\) precisely if \([n-1] \subseteq \phi([m])\) and \(\phi\) is injective.
        Only the identity morphisms satisfy this property.
        It follows that \(\Nor(\Delta_{\max})\) is the linearization of a discrete category and thus \(\Mod_{\Nor(\Delta_{\max})}\) is simply the category of \(\bN\)-graded \(k\)-modules. 
    \end{example}

    \begin{example}[The classical case] \label{ex:classical-dk}
    We consider the inclusion \(u = (k[\Delta_{\max}] \hookrightarrow k[\Delta]) \). As in the previous example, given \(\phi \in \Delta(m,n)\) we have \([\phi] \neq 0\) precisely if \([n-1] \subseteq \phi([m])\) and \(\phi\) is injective. We conclude
    \[ \Nor(\Delta)(m,n) \iso \begin{cases}
        \la [\id] \ra & \text{if } m = n, \\
        \la [\delta_n] \ra & \text{if } m = n - 1, \\
        0 & \text{otherwise.}
    \end{cases} \]
    In view of \Cref{construction:normalized-module}, we compute \(p_n \circ \delta_n\). 
    The cosimplicial relations (\ref{eq:cosimplicial}) yield \(p_n \circ \delta_n = \big( \sum_{i=0}^n (-1)^{i} \delta_{n-i} \big) \circ p_{n-1}\). 
    If \(\delta_n\) is replaced by \((-1)^n \delta_n\), we obtain the familiar action \([x] \triangleleft [(-1)^n \delta_n] = [x \triangleleft \sum_{i=0}^n (-1)^i \delta_i]\).
    Note that \(\Nor(\Delta)\) is isomorphic to \(\Ch^{\geq 0}\); the quotient of the free linear category on the quiver \((\partial^n \colon n \to n+1)_{n \in \bN}\) modulo the relations \(\partial^{n+1} \circ \partial^n = 0\). 
    The right modules of \(\Ch^{\geq 0}\) are precisely the nonnegative chain complexes in \(k\Mod\). 
    We recover the classical Dold--Kan correspondence 
    \[ \Fun(\Delta^\op, k\Mod) \iso \Mod_{k[\Delta]} \simeq \Mod_{\Ch^{\geq 0}} \iso \Ch_{\geq 0}(k\Mod). \]
    \end{example}

\subsection{Interaction with standard constructions} \label{sec:normalization-standard-constructions}
For this section we fix a functor \(u\colon k[\Delta_{\max}] \to \cC\) which is bijective on objects.
For convenience we will assume, without loss of generality, that \(u\) is the identity on objects.

\begin{remark}[Base change and duality] \label{rem:normalization-base-change}
    Suppose we have a commutative diagram in \(k\Cat\) of the form
\[\begin{tikzcd}[ampersand replacement=\&,cramped]
	\cC \&\& \cD \\
	\\
	{k[\Delta_{\max}]} \&\& \cA
	\arrow["F", from=1-1, to=1-3]
	\arrow["u", from=3-1, to=1-1]
	\arrow["\Psi", from=3-1, to=3-3]
	\arrow["v"', from=3-3, to=1-3]
\end{tikzcd}\]
    where all functors are bijective on objects.
    Then \(v \circ \Psi\) is a \(k[\Delta_{\max}]\)-ring structure, so normalization yields a functor
    \(\Nor(F)\colon \Nor(\cC) \to \Nor(\cD)\).
    In particular, a diagram of the form
    \[\begin{tikzcd}[ampersand replacement=\&,cramped]
        {\cC^\op} \&\& \cC \\
        \\
        {k[\Delta_{\max}]^\op} \&\& {k[\Delta_{\max}]}
        \arrow["F", from=1-1, to=1-3]
        \arrow["{u^\op}", from=3-1, to=1-1]
        \arrow["\Psi", from=3-1, to=3-3]
        \arrow["u"', from=3-3, to=1-3]
    \end{tikzcd}\]
    where \(F\) and \(\Psi\) are isomorphisms, induces a self-duality \(\Nor(\cC)^\op \iso \Nor(\cC)\).
\end{remark}

\begin{remark}[Quotients]
    Let \(I \subseteq \cC\) be an ideal and set
    \[ \widebar{I}(m,n) = u(p_n) \circ I(m,n) \circ u(p_m). \] 
    Then \(\widebar{I} \subseteq \Nor(\cC) \) is an ideal.
    The composite \(k[\Delta_{\max}] \to \cC \to \cC / I\)
    induces a universal functor \(\Nor(\cC) / \widebar{I} \to \Nor(\cC / I)\) which is an isomorphism (and bijective on objects).
\end{remark}

\begin{remark}[Localization]
    Let \(\eta \colon \id_{\cC} \Rightarrow \id_{\cC}\) be a natural transformation.
    We define \(\widebar{\eta}\colon \id_{\Nor(\cC)} \Rightarrow \id_{\Nor(\cC)}\) via \(\widebar{\eta}_n = u(p_n) \circ \eta_n \circ u(p_n)\).
    The composite \(k[\Delta_{\max}] \to \cC \to \cC[\eta^{-1}]\)
    induces a universal functor \(\Nor(\cC)[\widebar{\eta}^{-1}] \to \Nor(\cC[\eta^{-1}])\) which is an isomorphism.
\end{remark}

\section{Crossed simplicial groups} \label{sec:crossed-groups}
Crossed simplicial groups furnish \(\Delta\) with a sequence of automorphism groups in each degree.
They generalize Connes' cyclic category \(\Lambda\) which is introduced in \Cref{def:cyclic-category}.
We refer the reader to \cite{krasauskas87,fiedorowicz-loday-91,dyckerhoff-kapranov-15} for more details on the subject.

\subsection{Preliminaries}
\begin{definition}
    A \emph{crossed simplicial group} is a functor \(i\colon \Delta \to \Delta G\) which is bijective on objects and satisfies the property that every morphism \(f \in \Delta G(i(m), i(n))\) factors uniquely as \(f = i(\phi) \circ g\) for \(\phi \in \Delta(m,n)\) and \(g \in \Aut_{\Delta G}(i(m))\). We call \(\phi \circ g\) the \emph{normal form} of \(f\).
    Crossed simplicial groups form a full subcategory of the coslice category \(\Delta \downarrow \Cat\).
\end{definition}

\begin{convention}
    By abuse of notation, we will usually assume that \(i\) is the identity on objects and conflate \(\Delta\) with \(i(\Delta) \subseteq \Delta G\).
\end{convention}

\begin{definition}
    Given a crossed simplicial group \(\Delta G\), we denote by \(G\) the simplicial set
    \[ \Delta^\op \to (\Delta G)^\op \xrightarrow{\Delta G(-, 0)} \Set. \]
    The normal form of \(\Delta G\) allows us to naturally identify \(G_n = \Delta G(n,0) \iso \Aut_{\Delta G}(n)\).
    Under this identification, we will write \(h = g \triangleleft \phi\) whenever \(\omega_{n} \circ h = \omega_{m} \circ g \circ \phi\) and \(\omega_{i} \in \Delta(i,0)\) is the terminal morphism.
\end{definition}

\begin{definition}
    Let \(f\colon \Delta G \to \Delta H\) be a morphism of crossed simplicial groups with structures given by \(i\colon \Delta \to \Delta G\) and \(j\colon \Delta \to \Delta H\).
    We define the kernel of \(f\) as the pullback
    \[\begin{tikzcd}[ampersand replacement=\&,cramped]
        {\Delta K} \&\& \Delta \\
        \\
        {\Delta G} \&\& {\Delta H}
        \arrow[from=1-1, to=1-3]
        \arrow[from=1-1, to=3-1]
        \arrow["j"', from=1-3, to=3-3]
        \arrow["f", from=3-1, to=3-3]
    \end{tikzcd}\]
    equipped with the universal functor \(\Delta \to \Delta K\) induced by \(\Delta \xleftarrow{\id} \Delta \xrightarrow{i} \Delta G\).
    This makes \(\Delta K \to \Delta G\) a morphism of crossed simplicial groups.
\end{definition}

\begin{definition}
    A morphism of crossed simplicial groups \(p \colon \Delta G \to \Delta H\) is called an \emph{extension}, if it is full.
    Let \(\Delta K\) be the kernel of \(p\).
    Then 
    \[ \Delta K \to \Delta G \xrightarrow{p} \Delta H \]
    induces a degreewise short exact sequence of groups
    \[ 1 \to K_n \to G_n \to H_n \to 1. \]
\end{definition}

\begin{theorem} \label{thm:classification-extensions}
    For every crossed simplicial group \(\Delta G\) there exists an extension \(\Delta G \to \Delta H\)
    with kernel \(\Delta K\) such that \(K\) is a simplicial group and \(\Delta H\) is a crossed simplicial group of fundamental type listed in \cite[Table 2]{dyckerhoff-kapranov-15}.
    This extension is unique up to isomorphism.
\end{theorem}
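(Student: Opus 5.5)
The plan is to realize $\Delta H$ as the image of a canonical ``action'' of $\Delta G$ on the finite ordinals, and to take $\Delta K$ to be the part of $\Delta G$ that acts trivially.

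First I construct the action. For $g \in G_n = \Aut_{\Delta G}(n)$ and a vertex $x\colon 0 \to n$ in $\Delta$, I form $g^{-1}\circ x$ and take its normal form $x' \circ h$ with $x' \in \Delta(0,n)$ and $h \in G_0$. The assignment $x \mapsto x'$ defines a map $\rho_n(g)\colon [n] \to [n]$. Uniqueness of normal forms shows $\rho_n(g)$ is a bijection. Applying the same device to the edges $\Delta(1,n)$ records whether $g$ preserves or reverses the order on each edge. This produces a group homomorphism
\[ \rho_n\colon G_n \to \bZ/2 \ltimes \Sigma_{n+1}, \]
which is compatible with the crossed structure maps $g \mapsto g \triangleleft \phi$ and $\phi \mapsto g_*\phi$. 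The target family assembles into the hyperoctahedral crossed simplicial group, so $\rho$ is a morphism of crossed simplicial groups. Following Fiedorowicz--Loday, I would then show that the image of $\rho$ is one of the seven sub-crossed simplicial groups $\Delta$, $\Delta\bZ/2$, $\Lambda$, $\Xi$, $\Delta S$, $\Delta H$ and the reflexive variant. The argument uses the constraint that $g_*\phi$ is again weakly increasing: applied to the cofaces, this forces each image group to be trivial, $\bZ/2$, cyclic, dihedral, full symmetric or hyperoctahedral, uniformly in $n$.

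Next I treat the kernel $\Delta K$ of $\rho$. An element $g$ lies in it exactly when it acts trivially on all of $\Delta$, that is, $g_*\phi = \phi$ for all $\phi$. The crossed relations $(gh)\triangleleft\phi = (g\triangleleft h_*\phi)(h\triangleleft \phi)$ then reduce to $(gh)\triangleleft\phi = (g\triangleleft\phi)(h\triangleleft\phi)$. Hence $K$ is a simplicial group, and $\Delta K \iso \Delta \ltimes K$.

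The main obstacle is that this coarse quotient does not yet match \cite[Table 2]{dyckerhoff-kapranov-15}. That table contains the ``$\infty$'' types: paracyclic, paradihedral, braid-type and the other topological ones. In these types the groups $\Aut(n)$ are not finite and the image of $\rho$ does not detect the full structure. For example, the paracyclic category maps onto $\Lambda$, but its kernel $n\bZ \subseteq \bZ$ is not a simplicial subgroup compatible with the quotient in the required way. To refine, I would replace the discrete permutation action by the action on the geometric realization of the standard simplex, or equivalently on $\bZ$-periodic lifts as with the order adjoints on $\bZ$ mentioned in the introduction. Concretely, I pass to the universal cover of the connected component of the image. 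The step I would try first is to prove the following claim: the subgroup of $G_n$ acting trivially up to homotopy on the cyclic or braided orderings is a normal crossed sub-object, and the resulting quotient is the universal (``para-'') lift of the Fiedorowicz--Loday type. This is where I expect most of the work, namely checking closure under the crossed action.

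Finally, I would establish uniqueness. Suppose $\Delta G \to \Delta H'$ is another extension with simplicial-group kernel and fundamental quotient. Since a simplicial group acts trivially on $\Delta$, the kernel must lie in the trivially-acting part constructed above. Comparing the two quotients, which are both fundamental, through the universal property of the list then forces an isomorphism of extensions.
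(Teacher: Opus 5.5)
The paper gives no argument of its own here; it quotes \cite[Theorem~I.7]{dyckerhoff-kapranov-15}, which goes back to Fiedorowicz--Loday. That proof uses the canonical map to the hyperoctahedral crossed simplicial group, which is your first two steps done correctly, and stops there. The genuine gap is your third step, which rests on a misreading of \cite[Table 2]{dyckerhoff-kapranov-15}. The fundamental types there are the seven \emph{finite} ones: trivial, reflexive, cyclic, dihedral, symmetric, reflexosymmetric and hyperoctahedral. The paper uses them this way. In the proof of \Cref{thm:dim-one-classification} it identifies \(\Delta H\) by the growth of \(|H_n|\). In \Cref{sec:lifting-through-extensions} it treats the paracyclic and \(N\)-cyclic categories as extensions of \(\Lambda\) by a constant \(K\). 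Your claimed obstruction is false. The kernel of \(\Lambda_\infty \to \Lambda\) is \(\la \tau_n^{n+1} \ra\) in degree \(n\), and \(\tau_n^{n+1} \circ \phi = \phi \circ \tau_m^{m+1}\) for every \(\phi \in \Delta(m,n)\). So it is the constant simplicial group \(\bZ\), acting trivially on \(\Delta\). The ``universal cover'' refinement you propose is unproved, as you say yourself, and it would also break the statement. If para-types were admissible quotients, \(\Lambda_\infty\) would have two non-isomorphic extensions, \(\id\colon \Lambda_\infty \to \Lambda_\infty\) and \(\Lambda_\infty \to \Lambda\), contradicting uniqueness.

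The first two steps also need repair. The target is not \(\bZ/2 \ltimes \Sigma_{n+1}\) but \((\bZ/2)^{n+1} \rtimes \Sigma_{n+1}\), with one sign per vertex, namely \(\eps(g \triangleleft v_i)\). Here \(\eps\colon G_0 \to \bZ/2\) records whether \(a \triangleleft \omega_1\) reverses \([1]\). A single global sign cannot capture the hyperoctahedral image. Your list of seven also names the reflexive type twice and omits the reflexosymmetric one.

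Nor is the kernel the set of \(g\) with \(g_*\phi = \phi\) for all \(\phi\). In the reflexive crossed simplicial group, the nontrivial \(r_0 \in G_0\) acts trivially on the singletons \(\Delta(m,0)\). Yet \(r_0 \triangleleft \omega_1 = r_1\) reverses \([1]\), so this set is not closed under the simplicial operators. Instead, take \(K\) to be the kernel of the morphism of crossed simplicial groups to the hyperoctahedral one. Uniqueness of normal forms in the target then gives \(g \circ \phi = \phi \circ (g \triangleleft \phi)\) with \(g \triangleleft \phi \in K\), so \(K\) is a simplicial group.

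Uniqueness of the extension follows from naturality of this canonical map together with its injectivity on the fundamental types. For any other such extension \(\Delta G \to \Delta H'\), the canonical map of \(\Delta G\) factors through the embedding of \(\Delta H'\). Hence the two images and the two kernels coincide. There is no ``universal property of the list'' to invoke.
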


\begin{proof}
    \cite[\nopp Theorem~I.7]{dyckerhoff-kapranov-15}.
\end{proof}

\begin{proposition} \label{prop:crossed-reverse-codeg}
    If \(g \circ \sigma_i = h \circ \sigma_j\) then \(i = j\) and \(g = h\) for all automorphisms \(g,h\).
\end{proposition}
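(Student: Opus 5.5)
The plan is to use the sections of the codegeneracies together with uniqueness of the normal form in \(\Delta G\); I do not expect to need the structure of the induced actions of automorphisms on morphisms of \(\Delta\). Throughout, \(\sigma_i, \sigma_j \in \Delta(n, n-1)\) and \(g, h \in \Aut_{\Delta G}(n-1)\), and we suppose \(g \circ \sigma_i = h \circ \sigma_j\).

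First, precompose both sides with the coface \(\delta_i \in \Delta(n-1, n)\). By the cosimplicial relations (\ref{eq:cosimplicial}) we have \(\sigma_i \circ \delta_i = \id\), so the left-hand side becomes \(g\). This gives
\[ g = h \circ \sigma_j \circ \delta_i, \qquad\text{hence}\qquad \sigma_j \circ \delta_i = h^{-1} \circ g \quad \text{in } \Delta G(n-1, n-1). \]

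Next, compare normal forms of this single morphism of \(\Delta G\). Since \(\sigma_j \circ \delta_i\) lies in \(\Delta(n-1,n-1)\), the left-hand side is already in normal form \(\phi \circ \id\) with \(\phi = \sigma_j \circ \delta_i\). The right-hand side is in normal form \(\id \circ (h^{-1} g)\). Uniqueness of the factorization \(f = i(\phi) \circ g'\) in the definition of a crossed simplicial group then yields two conclusions. First, \(\sigma_j \circ \delta_i = \id_{n-1}\) in \(\Delta\). Second, \(h^{-1} g = \id\), that is, \(g = h\).

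Finally, cancel the isomorphism \(g = h\) in the original equation to get \(i(\sigma_i) = i(\sigma_j)\) in \(\Delta G\). Uniqueness of normal forms, applied to \(\sigma_i \circ \id = \sigma_j \circ \id\), shows that \(i\) is faithful, so \(\sigma_i = \sigma_j\) in \(\Delta\). Hence \(i = j\), because \(\sigma_i\) is the unique surjection hitting \(i\) twice.

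I expect no real obstacle. The only point to handle with care is applying uniqueness of the normal form to a morphism whose source and target are the same object, \(n-1\). One must also check that the conventions identifying \(\Delta\) with its image \(i(\Delta)\) legitimately allow us to read \(\sigma_j \circ \delta_i\) as a normal form with trivial automorphism part.
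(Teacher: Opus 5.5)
Your proof is correct, but it argues in the opposite order from the paper. You precompose with the single section \(\delta_i\) and then use uniqueness of normal forms to split \(\sigma_j \circ \delta_i = h^{-1} \circ g\) into its \(\Delta\)-part and its automorphism part. This gives \(g = h\) immediately, and \(i = j\) then follows from faithfulness of \(\Delta \to \Delta G\).

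The paper precomposes with both sections \(\delta_i\) and \(\delta_{i+1}\) of \(\sigma_i\) and cancels \(h\), obtaining \(\sigma_j \circ \delta_i = \sigma_j \circ \delta_{i+1}\) in \(\Delta\). Evaluating at \(i\) gives \(\sigma_j(i+1) = \sigma_j(i)\), hence \(i = j\). Only then does it deduce \(g = h\), using that \(\sigma_i\) is a split epimorphism.

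What each route buys:
\begin{itemize}
\item The paper's argument uses the crossed simplicial group axioms only through faithfulness of \(\Delta \to \Delta G\). It therefore works in any category into which \(\Delta\) embeds faithfully, with \(h\) merely monic.
\item Your argument relies on the full uniqueness of the factorization, in particular that no nontrivial automorphism lies in the image of \(\Delta\). In exchange, it needs no combinatorics of the codegeneracies and gets \(g = h\) in one step.
\end{itemize}
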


\begin{proof}
    From the cosimplicial identities \(\sigma_i \circ \delta_i = \id = \sigma_i \circ \delta_{i+1}\) we deduce
    \[ g \circ \sigma_i = h \circ \sigma_j \implies h \circ \sigma_j \circ \delta_i = h \circ \sigma_j \circ \delta_{i+1} \implies \sigma_j \circ \delta_{i} = \sigma_j \circ \delta_{i+1}. \]
    Evaluating these maps on \(i\) yields \(\sigma_j(i+1) = \sigma_j(i)\) and thus \(i = j\) as well as \(g = h\).
\end{proof}

\begin{definition}
    For \(g \in \Aut_{\Delta G}(n)\) we call \(j \in [n-1]\) a \emph{degeneracy direction} of \(g\), if \(\sigma_j \circ g\) is degenerate in the simplicial set \(\Delta G(-, n - 1)|_{\Delta^\op}\).
    We denote the set of degeneracy directions of \(g\) by \(D(g) \subseteq [n-1]\).
\end{definition}

\begin{proposition}
    Suppose \(g \in \Aut_{\Delta G}(n)\) has EZ-decomposition \(g = h \triangleleft \phi\) in \(G\) with \(h \in G^{\nd}_r\).
    Then \(g\) has exactly \(n - r\) degeneracy directions.
\end{proposition}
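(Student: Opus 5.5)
The plan is to reduce \(D(g)\) to the set of indices \(i\) for which \(g\) lies in the image of the degeneracy \(s_i\) of \(G\), and then count those indices using \Cref{prop:ez-decomp}.

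\emph{Step 1 (reformulating degeneracy directions).} By definition, \(j \in D(g)\) means \(\sigma_j \circ g = f \circ \sigma_i\) for some \(f \in \Delta G(n-1,n-1)\) and some \(i \in [n-1]\). Since \(f\) has normal form \(f = \psi \circ k\) with \(\psi \in \Delta(n-1,n-1)\) and \(k \in \Aut_{\Delta G}(n-1)\), rewrite \(k \circ \sigma_i\) in normal form as \(\psi' \circ g'\). Here \(\psi' \in \Delta(n,n-1)\) and \(g' = k \triangleleft \sigma_i\) under the identification \(G_n \iso \Aut_{\Delta G}(n)\); this is the step where I check that the identification matches the definition of \(\triangleleft\) via the terminal maps \(\omega\). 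Uniqueness of normal forms applied to \(\sigma_j \circ g = (\psi\circ\psi')\circ g'\) gives two conclusions:
\begin{itemize}
    \item \(g = k \triangleleft \sigma_i\);
    \item \(\sigma_j = \psi \circ \psi'\). Since \(\sigma_j\) is surjective, \(\psi\) is surjective, hence \(\psi = \id\), and so \(\sigma_j\) equals the \(\Delta\)-part of \(k \circ \sigma_i\).
\end{itemize}
Conversely, if \(g = k \triangleleft \sigma_i\), then \(k \circ \sigma_i = \psi' \circ g\) with \(\psi'\) surjective. I would argue that \(\psi'\) is a codegeneracy \(\sigma_j\); for instance, \(\psi'\) has the sections \(k^{-1}\)-twisted \(\delta_i,\delta_{i+1}\) forced by \(k\circ\sigma_i\circ\delta_i = k = k\circ\sigma_i\circ\delta_{i+1}\), so it is a surjection \(n \to n-1\). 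Then \(j \in D(g)\). This yields a map
\[ \{ i \in [n-1] : g \in \im s_i \} \to D(g), \quad i \mapsto j, \]
which is well defined because \(s_i\) is injective (\(\sigma_i\) has a section), so \(k\) is determined by \(i\). The map is surjective by the argument above.

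\emph{Step 2 (bijectivity).} For injectivity, suppose \(i\) and \(i'\) give the same \(j\) via \(k\) and \(k'\). Then \(k \circ \sigma_i = \sigma_j \circ g = k' \circ \sigma_{i'}\), and \Cref{prop:crossed-reverse-codeg} gives \(i = i'\).

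\emph{Step 3 (counting).} It remains to show that \(g \in \im s_i\) if and only if \(\phi(i) = \phi(i+1)\), where \(g = h \triangleleft \phi\) is the EZ-decomposition with \(\phi \in \Delta(n,r)\) surjective.
\begin{itemize}
    \item If \(\phi(i) = \phi(i+1)\), then \(\phi = \phi' \circ \sigma_i\), so \(g = s_i(h \triangleleft \phi')\).
    \item If \(g = s_i(y)\), decompose \(y = y' \triangleleft \chi\) by \Cref{prop:ez-decomp}. Then \(g = y' \triangleleft (\chi \circ \sigma_i)\) is an EZ-decomposition, so uniqueness forces \(\phi = \chi \circ \sigma_i\).
\end{itemize}
A surjection \([n] \to [r]\) has exactly \(n - r\) indices \(i\) with \(\phi(i) = \phi(i+1)\), which gives \(|D(g)| = n - r\).

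I expect the main obstacle to be the bookkeeping in Step 1: matching normal-form composition in \(\Delta G\) with the right action \(\triangleleft\) on \(G\), and confirming that the \(\Delta\)-part of \(k \circ \sigma_i\) is indeed a codegeneracy.
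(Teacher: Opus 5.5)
Your proposal is correct and is essentially the paper's own argument. Both set up a bijection between the indices \(i\) with \(\phi(i) = \phi(i+1)\) (equivalently \(g \in \im s_i\)) and \(D(g)\) by reading off the codegeneracy in the normal form of \(k \circ \sigma_i\), with injectivity from \Cref{prop:crossed-reverse-codeg} and surjectivity from uniqueness of normal forms and of the EZ-decomposition; the only cosmetic differences are that you show \(f\) is an automorphism via normal-form uniqueness (\(\psi = \id\)) where the paper calls it a split epic endomorphism, and you spell out why the \(\Delta\)-part is a codegeneracy, which the paper leaves implicit.
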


\begin{proof}
    Let \(S \subseteq [n-1]\) be the set of indices \(j\) such that \(\phi(j) = \phi(j+1)\).
    Note that \(|S| = n - r\), since \(\phi\) is surjective.
    For \(j \in S\) we have \(\phi = \phi_j \circ \sigma_j\) for a unique \(\phi_j \in \Delta(n-1,r)\).
    We set \(h_j = h \triangleleft \phi_j\). It follows that \(h_j \circ \sigma_j = \sigma_{\pi(j)} \circ g\) for a unique index \(\pi(j) \in [n-1]\).
    By \Cref{{prop:crossed-reverse-codeg}}, the assignment \(j \mapsto \pi(j)\) is an injective map \(S \to D(g)\).
    It is also surjective.
    Indeed, given \(k \in D(g)\) write \(\sigma_k \circ g = f \circ \sigma_j\).
    It follows that \(f\) is a split epic endomorphism and thus an automorphism.
    Hence, \(g = f \triangleleft \sigma_j\). Uniqueness of the EZ-decomposition yields \(j \in S\) and thus \(\pi(j) = k\).
\end{proof}

\subsection{Structure of the normalized category}

\begin{theorem} \label{thm:rank-thm}
    Let \(g \in G_m\) have its nondegenerate part in degree \(r\). The number of morphisms \(\phi \in \Delta(m,n)\) such that \([\phi \circ g] \neq 0\) in \(\Nor(\Delta G)\), is given by the binomial coefficient
    \[ \binom{r+1}{m-n+1} \]
    which we define to be zero when the lower index is outside the usual range.
    In particular, \(\Nor(\Delta G)(m,n)\) is a free \(k\)-module of rank
    \[ \sum_{r=0}^m |G_r^{\nd}| \cdot \binom{m}{r} \cdot \binom{r+1}{m-n+1}. \]
\end{theorem}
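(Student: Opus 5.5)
The plan is to use the quotient description of \(\Nor(\cD)\) from the construction following \Cref{thm:abstract-dk}, with \(\cD = \Delta G\). The non-vanishing classes \([f]\), \(f \in \Delta G(m,n)\), form a basis, and \([f]=0\) precisely when \(f = \delta_i \circ f'\) for some \(i<n\) or \(f = f' \circ \sigma_j\) for some \(j<m\). By uniqueness of normal forms, every \(f\) is uniquely \(\phi\circ g\) with \(\phi \in \Delta(m,n)\) and \(g \in \Aut(m) \iso G_m\). So the rank is \(\sum_{g \in G_m} \#\{\phi : [\phi\circ g]\neq 0\}\), and it suffices to prove the first claim and then count the \(g\).

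\emph{Step 1: translate both vanishing conditions into conditions on \(\phi\).}

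For the left condition, write \(f' = \psi \circ h\) in normal form. Then \(\phi \circ g = \delta_i \circ \psi \circ h\), and uniqueness of normal forms gives \(h = g\) and \(\phi = \delta_i \circ \psi\). Thus the left condition holds iff \(\phi\) misses some \(i < n\), that is, iff \([n-1] \not\subseteq \phi([m])\).

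For the right condition, write \(f' = \psi \circ h\) and use the crossed structure to rewrite \(h \circ \sigma_j = \sigma_k \circ g'\) in normal form. Uniqueness gives \(g' = g\) and \(\phi = \psi \circ \sigma_k\). Moreover \(\sigma_k \circ g = h \circ \sigma_j\) is degenerate, so \(k \in D(g)\). Conversely, if \(k \in D(g)\), write \(\sigma_k \circ g = h \circ \sigma_j\). Then any \(\phi = \psi\circ\sigma_k\) gives \(\phi \circ g = \psi \circ h \circ \sigma_j\), so \([\phi\circ g] = 0\).

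Hence \([\phi \circ g] \neq 0\) iff \(\phi\) hits all of \([n-1]\) and \(\phi(k) \neq \phi(k+1)\) for all \(k \in D(g)\). The care needed to get both directions of this equivalence, particularly the converse for the right condition, is where I expect the main difficulty to lie.

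\emph{Step 2: count the admissible \(\phi\).}

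By the preceding proposition, \(|D(g)| = m - r\). A weakly increasing \(\phi\) whose image contains \([n-1]\) must have \(\phi(0)=0\) and all increments \(\varepsilon_k = \phi(k+1)-\phi(k) \in \{0,1\}\). Indeed, a jump of size \(\ge 2\) would skip a value \(\le n-1\). Such \(\phi\) correspond bijectively to sequences \((\varepsilon_0,\dots,\varepsilon_{m-1}) \in \{0,1\}^m\) with sum \(n-1\) or \(n\); the sum is \(n-1\) when \(n \notin \im\phi\) and \(n\) otherwise.

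The degeneracy condition forces \(\varepsilon_k = 1\) for the \(m-r\) indices \(k \in D(g)\). This leaves \(r\) free positions, whose sum must be \(n-1-(m-r)\) or \(n-(m-r)\). The count is therefore
\[ \binom{r}{r+n-m-1} + \binom{r}{r+n-m} = \binom{r+1}{r+n-m} = \binom{r+1}{m-n+1}, \]
by Pascal's rule. This identity remains valid with the stated out-of-range convention; the degenerate case \(n=0\) should be checked directly.

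\emph{Step 3: sum over \(g\).}

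By \Cref{prop:ez-decomp}, the elements \(g \in G_m\) with nondegenerate part in degree \(r\) correspond bijectively to pairs consisting of a surjection \([m]\to[r]\) and an element of \(G_r^{\nd}\). There are \(\binom{m}{r}\) such surjections. Distinct pairs \((\phi,g)\) give distinct basis classes, again by uniqueness of normal forms. Summing the count from Step 2 over \(r\) yields the rank formula
\[ \sum_{r=0}^m |G_r^{\nd}| \cdot \binom{m}{r} \cdot \binom{r+1}{m-n+1}. \]
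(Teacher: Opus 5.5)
Your proposal is correct and follows the paper's proof essentially step for step. You characterize \([\phi\circ g]\neq 0\) by \([n-1]\subseteq\im\phi\) together with strict increase of \(\phi\) at every \(k\in D(g)\), count the admissible increment patterns via Pascal's rule (your \(n=0\) caveat is already covered by the zero convention), and sum over EZ-decompositions. Your Step 1 in fact justifies the characterization that the paper only asserts. The one point you leave implicit is that the \(\Delta\)-part of the normal form of \(h\circ\sigma_j\) is a codegeneracy; this holds because that \(\Delta\)-part is split epic in \(\Delta\) by uniqueness of normal forms, hence surjective.
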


\begin{proof}
    Let \(\phi \circ g \in \Delta G(m,n)\) be in normal form.
    We have \([\phi \circ g] \neq 0\) in \(\Nor(\Delta G)\), precisely if \([n-1] \subseteq \im \phi\) and \(\phi(j) < \phi(j + 1)\) for all \(j \in D(g)\).
    Suppose that \(g\) has its nondegenerate part in degree \(r\) and put \(s = |D(g)| = m - r\).
    The image condition on \(\phi\) says that it is either
    \begin{enumerate}
        \item surjective onto \([n]\), with exactly \(n\) strict increases, or
        \item surjective onto \([n-1]\), with exactly \(n-1\) strict increases.
    \end{enumerate}
    Since \(\phi(0) = 0\), the positions of the increases determine \(\phi\) completely.
    The \(s\) positions in \(D(g)\) are fixed, the other positions can be chosen freely from the complementary set, which has \(r\) elements. Therefore, the count is
    \[ \binom{r}{n - s} + \binom{r}{n - 1 - s} = \binom{r + 1}{n - s} = \binom{r + 1}{m - n + 1}. \]
    Here, the first summand comes from case (1) and the second summand from case (2).
    If \(n = 0\), case (2) does not occur, and the same formula holds. By uniqueness of the EZ-decomposition, each \(h \in G^{\nd}_r\) gives exactly one element \(g \in G_m\) for each surjective \(\psi \in \Delta(m, r)\).
    Since there are \(\binom{m}{r}\) surjections in \(\Delta(m,r)\), the rank formula follows.
\end{proof}

\begin{corollary} \label{cor:crossed-normalization-nonzero}
    Let \(d_m = \max \{ r \leq m : G_r^{\nd} \neq \varnothing \}\). Then
    \[
        \Nor(\Delta G)(m,n) \neq 0 \iff m - d_m \leq n \leq m + 1.
    \]
\end{corollary}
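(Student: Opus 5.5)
The plan is to read the corollary off the rank formula of \Cref{thm:rank-thm}. Since \(\Nor(\Delta G)(m,n)\) is free of rank
\[ \sum_{r=0}^m |G_r^{\nd}| \cdot \binom{m}{r} \cdot \binom{r+1}{m-n+1}, \]
and every summand is a nonnegative integer, the module is nonzero if and only if at least one summand is positive. A summand indexed by \(r\) is positive exactly when three conditions hold: \(G_r^{\nd} \neq \varnothing\); \(\binom{m}{r} \neq 0\), which holds automatically for \(0 \le r \le m\); and \(0 \le m-n+1 \le r+1\). The last condition is equivalent to \(m - r \le n \le m+1\). So nonvanishing is equivalent to the existence of some \(r \le m\) with \(G_r^{\nd} \neq \varnothing\) and \(m - r \le n \le m+1\).

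The next step is to replace this existential condition by a condition on \(d_m\). The lower bound \(m - r\) decreases as \(r\) grows, so the condition is weakest when \(r\) is as large as possible among the admissible values. That value is exactly \(d_m\).

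Concretely, for the forward direction: if the \(r\)-th summand is nonzero, then \(r \le d_m\) by maximality of \(d_m\), hence \(m - d_m \le m - r \le n \le m+1\). Conversely, if \(m - d_m \le n \le m+1\), take \(r = d_m\). Then \(G_{d_m}^{\nd} \neq \varnothing\) by definition of \(d_m\), and the binomial coefficients are nonzero, so that summand is positive.

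One small point remains: \(d_m\) must be well defined, i.e.\ the set \(\{r \le m : G_r^{\nd} \neq \varnothing\}\) must be nonempty. This holds because \(G_0 = \Aut_{\Delta G}(0)\) contains the identity, and every element of \(G_0\) is nondegenerate. Hence \(0\) always lies in the set.

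There is no real obstacle. The only care needed is with the binomial convention: \(\binom{r+1}{m-n+1}\) is taken to be zero when \(m-n+1 < 0\) (that is, \(n > m+1\)) or when \(m-n+1 > r+1\). With that convention the two inequalities above are exactly the nonvanishing range, including the edge case \(n = 0\), which \Cref{thm:rank-thm} already covers.
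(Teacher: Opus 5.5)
Your proposal is correct and matches the paper's proof. The paper likewise reads nonvanishing off the rank formula of \Cref{thm:rank-thm}: the \(r\)-th summand contributes exactly when \(G_r^{\nd} \neq \varnothing\) and \(0 \le m-n+1 \le r+1\). You additionally spell out the maximality step through \(d_m\), and you note that \(d_m\) is well defined because \(G_0^{\nd} = G_0 \neq \varnothing\).

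One small wording fix: for examples such as \(\Lambda_{\infty}\) the sets \(G_r^{\nd}\) are infinite, so the summands are cardinals rather than nonnegative integers. The argument is unaffected, since a sum of cardinals is nonzero iff some summand is, and a product is nonzero iff all factors are.
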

\begin{proof}
    Note that \(r \leq m\) contributes in the rank formula precisely if \(G^{\nd}_r \neq \varnothing\) and the binomial coefficients are nonzero, i.e.~\(0 \leq m - n + 1 \leq r + 1\).
\end{proof}

\begin{corollary}
    The rank formula and EZ-decomposition imply
    \begin{enumerate}
        \item \(\Nor(\Delta G)(m,0) \iso k[G_m^{\nd}]\),
        \item \(\Nor(\Delta G)(m, m+1) \iso k[G_m]\),
        \item If \(G_r^{\nd} \neq \varnothing\), then \(\Nor(\Delta G)(n + r, n) \neq 0\) for all \(n \in \bN\).
    \end{enumerate}
    In particular, all \(P_m\) are nonzero due to \(\Nor(\Delta G)(m, m+1) \iso \Mod_{k[\Delta G]}(P_m, P_{m+1})\).
\end{corollary}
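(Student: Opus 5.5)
Each item will be deduced from the rank formula of \Cref{thm:rank-thm}, together with the uniqueness of EZ-decompositions (\Cref{prop:ez-decomp}). Throughout, we use that the summand indexed by \(r\) in \(\operatorname{rank}\Nor(\Delta G)(m,n)\) is \(|G_r^{\nd}|\binom{m}{r}\binom{r+1}{m-n+1}\).

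For (1), put \(n=0\). The last binomial becomes \(\binom{r+1}{m+1}\), which is nonzero only for \(r=m\), and then equals \(1\). So the rank is \(|G_m^{\nd}|\cdot\binom{m}{m}=|G_m^{\nd}|\). To get an actual isomorphism with \(k[G_m^{\nd}]\), and not just equal ranks, we trace the proof of \Cref{thm:rank-thm}. The basis consists of the nonvanishing classes \([\phi\circ g]\) with \(\phi\in\Delta(m,0)\). This \(\phi\) is the terminal map \(\omega_m\), so \(\omega_m\circ g\) corresponds to \(g\in G_m\). The class survives exactly when \(D(g)=\varnothing\), i.e.\ when \(r=m\), i.e.\ when \(g\) is nondegenerate. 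Sending \(g\mapsto[\omega_m\circ g]\) therefore gives the basis bijection \(G_m^{\nd}\to\) basis of \(\Nor(\Delta G)(m,0)\).

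For (2), put \(n=m+1\). The last binomial is \(\binom{r+1}{0}=1\), so the rank is \(\sum_r|G_r^{\nd}|\binom{m}{r}\). By uniqueness of the EZ-decomposition, this sum equals \(|G_m|\). Concretely, in the case count of \Cref{thm:rank-thm} with \(n=m+1\), \(\phi\) must be the unique injection missing only \(m+1\), namely \(\delta_{m+1}\). This \(\phi\) has strict increases everywhere, so the condition on \(D(g)\) is automatic. Hence the classes \([\delta_{m+1}\circ g]\), \(g\in G_m\), form a basis, which gives \(\Nor(\Delta G)(m,m+1)\iso k[G_m]\).

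For (3), take \(m=n+r\) and look only at the summand indexed by \(r\). It is \(|G_r^{\nd}|\binom{n+r}{r}\binom{r+1}{r+1}\), which is positive whenever \(G_r^{\nd}\neq\varnothing\). Since all other summands are nonnegative, \(\Nor(\Delta G)(n+r,n)\neq0\). The same conclusion also follows from \Cref{cor:crossed-normalization-nonzero}, since \(d_{n+r}\ge r\).

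Finally, \(G_m\) always contains the identity, so by (2) the module \(\Nor(\Delta G)(m,m+1)=\Mod_{k[\Delta G]}(P_m,P_{m+1})\) is nonzero. A zero object has only zero homomorphisms out of it, so \(P_m\neq0\). The only step needing any care is identifying the explicit bases in (1) and (2), rather than only comparing ranks; this is handled by reading off which \(\phi\) occur in the proof of \Cref{thm:rank-thm}.
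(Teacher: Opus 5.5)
Your proof is correct and follows the paper's route: you specialize the rank formula of \Cref{thm:rank-thm} at \(n=0\), \(n=m+1\) and \(m=n+r\), and use the EZ-decomposition count \(|G_m| = \sum_r |G_r^{\nd}|\binom{m}{r}\), which is exactly what the statement indicates. Your extra step of reading off explicit bases is sound but not required, since free \(k\)-modules of equal rank are already isomorphic.
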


An interesting consequence of this observation is that modules of a crossed simplicial group cannot be equivalent to modules of a classical ring.

\begin{corollary} \label{cor:morita-infinite}
    \(k[\Delta G]\) cannot be Morita equivalent to a category with finitely many objects.
\end{corollary}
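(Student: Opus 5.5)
Suppose, for contradiction, that \(k[\Delta G]\) is Morita equivalent to a small linear category \(\cB\) with finitely many objects \(b_1, \dots, b_N\). By \Cref{thm:abstract-dk}, \(\Mod_{k[\Delta G]} \simeq \Mod_{\Nor(\Delta G)}\), so \(\Mod_{\Nor(\Delta G)} \simeq \Mod_{\cB}\). The plan is to transport the tiny generator \(\bigoplus_i \cB^{b_i}\) to a single tiny module \(T\) over \(\Nor(\Delta G)\) and show that no single tiny module can generate. Equivalences preserve tininess and the generator property, since both are formulated purely in terms of hom-functors, colimits and zero objects. So \(T\) is a tiny module with \(\Mod_{\Nor(\Delta G)}(T, X) = 0 \Rightarrow X = 0\) for all \(X\).

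By \Cref{prop:characterization-tiny}, \(T\) is a retract of a finite direct sum of representables \(\bigoplus_{j=1}^{M} \Nor(\Delta G)^{n_j}\). Let \(D = \max_j n_j\). For any module \(X\), the group \(\Mod(T, X)\) embeds as a retract of \(\bigoplus_j X_{n_j}\), by Yoneda. Hence it suffices to find a nonzero module \(X\) with \(X_n = 0\) for all \(n \leq D\).

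Take \(X\) to be a representable \(\Nor(\Delta G)^{m}\) with \(m\) large, so that \(X_n = \Nor(\Delta G)(n, m)\). By \Cref{cor:crossed-normalization-nonzero}, \(\Nor(\Delta G)(n,m) \neq 0\) forces \(m \leq n+1\), that is \(n \geq m-1\). Choosing \(m = D + 2\) therefore gives \(X_n = 0\) for all \(n \leq D\). Meanwhile \(X_m \ni [\id_m]\), which is nonzero: by the preceding corollary all \(P_m\) are nonzero, so \(\Nor(\Delta G)(m,m) = \End(P_m) \neq 0\). Thus \(\Mod(T, X) = 0\) but \(X \neq 0\), contradicting the generator property.

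The only delicate point is justifying that tiny generators transport along an arbitrary linear equivalence. This should be straightforward: the equivalence preserves colimits and zero objects, and it preserves hom-modules up to isomorphism. Everything else reduces to the vanishing bound \(n \leq m+1\) from \Cref{cor:crossed-normalization-nonzero}, which holds for every crossed simplicial group without any hypothesis on \(G\).
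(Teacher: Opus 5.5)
Your proof is correct and is essentially the paper's argument transported across the equivalence of \Cref{thm:abstract-dk}. The paper works in \(\Mod_{k[\Delta G]}\) with the tiny generator \((P_n)\) and tests against \(P_n\) for large \(n\). You work in \(\Mod_{\Nor(\Delta G)}\) and test against the representables \(\Nor(\Delta G)^m\), which are exactly the images of the \(P_m\). Both arguments rest on the same vanishing bound \(\Nor(\Delta G)(j,n)=0\) for \(n>j+1\) and on the nonvanishing of every \(P_m\).
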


\begin{proof}
    Suppose \(k[\Delta G]\) were Morita equivalent to a linear category with finitely many objects. Then \(\Mod_{k[\Delta G]}\) admits a finite tiny generator \(M_1, \dots, M_n\). Since each \(M_i\) is a retract of a finite direct sum of certain \(P_j\), there exists a finite direct sum \(S = P_{j_1} \oplus \dots \oplus P_{j_r}\) of which every \(M_i\) is a retract. If we choose \(n > 1 + \max \{ j_1, \dots, j_r \}\), then we must have \(\Mod_{k[\Delta G]}(S, P_n) = 0\) and thus \(\Mod_{k[\Delta G]}(M_i, P_n) = 0\) for all \(i\). However, since all \(P_n\) are nonzero, this contradicts the generating property of \((M_i)\).
\end{proof}

\begin{remark} \label{rem:crossed-structure-matrix}
    We can visualize \(\Nor(\Delta G)\) as an infinite matrix whose entries are the \(k\)-modules \(\Nor(\Delta G)(m,n)\). 
    If we let \(d = \dim G\), then the matrix has the following shape.
    \[
    \begin{tikzpicture}[x=0.75cm,y=0.75cm,font=\small]
    \begin{scope}
    \clip (0,-7) rectangle (7,0);

    \fill[black!10]
      (0,0) -- (1,0) -- (7,-6) -- (7,-7)
      -- (5,-7) -- (0,-2) -- cycle;

    \draw[
      black!45,
      line width=7pt,
      line cap=butt
    ] (0,0) -- (7,-7);

    \draw[black!65,semithick] (1,0) -- (7,-6);
    \draw[black!65,semithick] (0,-2) -- (5,-7);
  \end{scope}

  \draw[thick]
  (0.3,0) -- (0,0) -- (0,-7) -- (0.3,-7);

    \draw[thick]
  (6.7,0) -- (7,0) -- (7,-7) -- (6.7,-7);

  \node[above] at (3.5,0.15) {column \(n\)};
  \node[rotate=90] at (-0.75,-3.5) {row \(m\)};

  \node at (5.6,-1.2) {\Large \(0\)};
  \node at (1.2,-5.8) {\Large \(0\)};

  \node[
    rotate=-45,
    inner sep=0pt
  ] at (4.4,-5.3)
    {\(-1\leq m-n\leq d\)};

  \draw[stealth-stealth,semithick]
    (2,-1.08) -- (2,-3.92)
    node[
      midway,
      anchor=east,
      xshift=-3pt,
      yshift=2pt,
      inner sep=0pt
    ] {\(d+2\)};
    \end{tikzpicture}
    \]
    The entries above the diagonal are controlled by the \(\Delta\)-part of \(\Delta G\); they correspond to the linearized orbit of \(\delta_n\) under the right action of \(G_{n-1}\).
    The rest is controlled by the \(G\)-part of \(\Delta G\).
    Any nonzero entries in the leftmost column are transported diagonally.
    In particular, if \(G_r^{\nd} \neq \varnothing\) for infinitely many degrees \(r\), then every column has infinitely many nonzero entries below the diagonal. 
\end{remark}

\subsection{Classification of self-dual crossed simplicial groups}
A particularly nice situation arises when \(\Nor(\Delta G)\) is self-dual. 
We will show that in this case the structure matrix in \Cref{rem:crossed-structure-matrix} is symmetric.

\begin{lemma} \label{lem:crossed-simplicial-strong-equiv}
    Every equivalence \((\Delta G)^\op \simeq \Delta G\) is the identity on objects.
\end{lemma}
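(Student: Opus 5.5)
Let \(F\colon (\Delta G)^\op \to \Delta G\) be an equivalence. The plan is to find an invariant of an object \(n\) that any equivalence must preserve and that already separates all objects. The natural choice is the automorphism group \(\Aut_{\Delta G}(n) \iso G_n\), but these groups need not distinguish degrees; for instance, all \(G_n\) are trivial for \(\Delta\) itself. So we use a finer invariant: the number of isomorphism classes of objects admitting a morphism to, or from, \(n\) with some property. Concretely, the invariant will be the length of chains of split monomorphisms and split epimorphisms.

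\emph{Step 1: no two distinct objects are isomorphic.} By the normal form, a morphism \(m \to n\) is \(\phi \circ g\) with \(g\) an automorphism. It is an isomorphism only if \(\phi\) is invertible in \(\Delta\), hence \(m = n\). So \(\Delta G\) is skeletal, and any equivalence \(F\) is injective on objects. Moreover \(F\) is essentially surjective, and by skeletality every object is isomorphic only to itself, so \(F\) is bijective on objects.

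\emph{Step 2: characterize degree categorically.} Call \(f\colon a \to b\) a proper split epimorphism if it has a section and is not an isomorphism; define proper split monomorphisms similarly.
\begin{itemize}
    \item In \(\Delta G\), \(\phi \circ g\) is split epic iff \(\phi\) is split epic. The backward implication is clear. For the forward one, from \(\phi \circ g \circ s = \id\) write \(g \circ s = \psi \circ h\) in normal form; then \(\phi \circ \psi \circ h = \id\) forces \(\phi \circ \psi\) to be an automorphism lying in \(\Delta\), hence the identity. So \(\phi\) is surjective.
    \item Likewise, \(\phi \circ g\) is split monic iff \(\phi\) is injective.
\end{itemize}
Define \(\ell(n)\) as the maximal length of a chain \(n = a_0 \to a_1 \to \dots \to a_\ell\) of proper split epimorphisms. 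By the above, each step strictly lowers the degree, and the codegeneracies realize such a chain down to \(0\). Hence \(\ell(n) = n\).

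Under \(F\), proper split epimorphisms in \((\Delta G)^\op\), which are the proper split monomorphisms of \(\Delta G\) reversed, correspond to proper split epimorphisms of \(\Delta G\). So \(\ell(F(n)) = \ell^{\mathrm{mono}}(n)\), where \(\ell^{\mathrm{mono}}(n)\) is the maximal length of a chain of proper split monomorphisms ending at \(n\). Injections \(a \to n\) in \(\Delta\) are split as long as \(a \geq 0\), so such chains strictly lower the degree and reach down to \(0\), giving \(\ell^{\mathrm{mono}}(n) = n\). Therefore \(F(n) = n\).

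\emph{Main obstacle.} The subtle point is the split-epi characterization, which needs the uniqueness of normal forms. One must check carefully that a product \(\phi \circ \psi \circ h\) being an identity forces \(h = \id\) and \(\phi \circ \psi = \id\). This follows by applying the uniqueness of the normal form to \(\id = \id \circ \id\). An alternative is to count objects \(a\) admitting a morphism \(a \to n\) but no morphism to it from above; the chain-length argument seems cleaner.
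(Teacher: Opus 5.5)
Your proof is correct and follows the paper's argument: the paper also compares the supremum of lengths of chains of non-monic split epimorphisms starting at \(n\) with that of chains of non-epic split monomorphisms ending at \(n\), notes that both equal \(n\), and uses that the equivalence swaps the two kinds of chains. Your Step 1 and normal-form computations supply what the paper calls ``clear''. One caveat: the split-mono case is not literally ``likewise'', since the automorphism lands on the wrong side of \(\phi\); what you actually need (a proper split mono strictly raises degree) follows at once by applying your split-epi analysis to its retraction.
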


\begin{proof}
    We construct the following invariants.
    By a chain of morphisms we mean a finite sequence \((f_1, \dots, f_r)\) of composable morphisms.
    The chain starts at the domain of \(f_1\) and ends at the codomain of \(f_r\).
    Its length is the tuple size \(r\).
    \begin{itemize}
        \item Let \(h(n)\) be the supremum over all lengths of chains consisting of non-monic split epimorphisms starting at \(n \in \Delta G\).
        \item Let \(d(n)\) be the supremum over all lengths of chains consisting of non-epic split monomorphisms ending at \(n \in \Delta G\).
    \end{itemize}
    Clearly, we have \(h(n) = n = d(n)\).
    Now let \(F\colon (\Delta G)^\op \to \Delta G\) be an equivalence. Since \(F\) bijects the defining chains of \(h(n)\) to the defining chains of \(d(F(n))\) we conclude \(F(n) = d(F(n)) = h(n) = n\).
\end{proof}

\begin{corollary} \label{cor:self-dual-dim}
    If \((\Delta G)^\op \simeq \Delta G\), then \(\dim G = 1\).
\end{corollary}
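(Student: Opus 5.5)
By \Cref{lem:crossed-simplicial-strong-equiv}, any equivalence \(F\colon (\Delta G)^\op \to \Delta G\) is the identity on objects, so it gives bijections \(\Delta G(m,n) \iso \Delta G(n,m)\). These bijections intertwine precomposition with \(\Aut(m)\) on one side and postcomposition with \(\Aut(m)\) on the other, up to the group anti-isomorphism \(F|_{\Aut(m)}\). I would not compare cardinalities, since \(G_n\) may be infinite, as in the paracyclic case. Instead I would compare the structure of these group actions, which survives for infinite groups.

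\textbf{Step 1.} By the normal form, \(\Aut(m)\) acts freely and transitively on \(\Delta G(m,0) \iso G_m\) from the right. Transporting along \(F\), \(\Aut(m)\) acts freely and transitively on \(\Delta G(0,m)\) by postcomposition. The normal form gives \(\Delta G(0,m) \iso \Delta(0,m) \times G_0\), and \(G_0\) acts freely on the right. Passing to \(G_0\)-orbits, I obtain a transitive left action of \(\Aut(m)\) on the vertex set \(\Delta(0,m) \iso [m]\). Its stabilizers are in bijection with \(G_0\), namely the elements \(g\) with \(g \circ \delta_{\mathrm{vertex}} = \delta_{\mathrm{vertex}} \circ h\). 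So I get a surjective "vertex map" \(v_m\colon G_m \to [m]\) whose fibres are torsors over \(G_0\).

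\textbf{Step 2: \(\dim G \geq 1\).} The value \(\dim G = -1\) is impossible because \(\id_0 \in G_0^{\nd}\). Suppose \(\dim G = 0\). Then every \(g \in G_1\) is \(s_0(h)\) for a unique \(h \in G_0\), by uniqueness of the EZ-decomposition (\Cref{prop:ez-decomp}). I would check from the crossed relation \(g \circ \sigma_0 = \sigma_0 \circ g'\), equivalently \(g = h \triangleleft \sigma_0\), that such a degenerate \(g\) sends each vertex \(\delta_i\) to \(\delta_i \circ (\text{something in } G_0)\). This means \(v_1\) is constant, contradicting the transitivity from Step 1 on the two vertices of \([1]\).

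\textbf{Step 3: \(\dim G \leq 1\).} I would show that \(v\) is compatible with the simplicial structure. For a coface \(\delta_i\) and \(g \in G_m\), the crossed relation \(g \circ \delta_i = \delta_{i'} \circ (g \triangleleft \delta_i)\) should show that \(v\) intertwines faces with the corresponding vertex bookkeeping, and similarly for degeneracies. Then the condition "fibres of \(v_m\) are \(G_0\)-torsors for every \(m\)" can be phrased as saying that \(G_m\) is determined by \(G_0\) together with one vertex datum in \([m]\). Now let \(x \in G_m^{\nd}\) with \(m \geq 2\). Its faces, together with \(v_m(x)\), should already be realized by a degenerate element \(s_j(y)\) with \(y\) built from \(G_1\). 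Freeness of the action in Step 1 then forces \(x = s_j(y)\), which is a contradiction.

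The main obstacle is Step 3. One must make precise that the free transitive action on \(\Delta G(0,m)\) pins down each element of \(G_m\) by its vertex and a \(G_0\)-label, and then show that this data is always hit by a degenerate simplex when \(m \geq 2\). I would first try to carry this out using the stronger input that \(F\) also gives free transitive actions of \(\Aut(m)\) on \(\Delta G(1,m)\) modulo \(G_1\). This parametrizes \(G_m\) by edges of \([m]\) and \(G_1\), and it should directly exhibit every element of \(G_m\) as a degeneracy of an element of \(G_1\).
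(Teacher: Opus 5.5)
\textbf{There is a genuine gap in Step 3.} Your Step 1 is correct, but the bound \(\dim G \leq 1\), which is the substance of the statement, is not proved. Step 3 consists of expectations (``should show'', ``should already be realized'') and lacks any mechanism linking \(F\) to degeneracy. Your vertex map records how \(\Aut(m)\) acts by postcomposition on \(\Delta G(0,m)\). Degeneracy of \(f \in G_m = \Delta G(m,0)\), by contrast, means that \(f\) factors through some \(\sigma_j\), and nothing in Step 1 relates the two.

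The strengthening you propose for Step 3 also does not hold. Transported along \(F\), \(\Aut(m)\) acts freely on \(\Delta G(1,m)\), but with one orbit for each of the \(m+2\) elements of \(\Delta(m,1)\). For \(\Lambda\), which is self-dual, even the combined action with \(\Aut(1) \iso \bZ/2\) leaves at least two orbits when \(m \geq 2\).

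\textbf{The missing idea} is to apply \(F\) to the codegeneracies themselves, which is what the paper does.
\begin{itemize}
\item Since \(\sigma_j \in \Delta G(n,n-1)\) is split epic, \(F(\sigma_j)\) is split monic. Hence its normal form is \(\delta_{i(j)} \circ g_j\): a non-injective \(\Delta\)-part would factor through some \(\sigma_k\), which cannot be split monic because \(\sigma_k \circ \delta_k = \sigma_k \circ \delta_{k+1}\).
\item By \Cref{prop:crossed-reverse-codeg}, the assignment \(j \mapsto i(j)\) is injective.
\item Write \(F(f) = \phi \circ g\) with \(\phi \in \Delta(0,n)\). Then \(f = h \circ \sigma_j\) for some \(h\) if and only if \(F(f) \in F(\sigma_j) \circ \Delta G(0,n-1) = \delta_{i(j)} \circ \Delta G(0,n-1)\), that is, if and only if \(\phi(0) \neq i(j)\).
\item For \(n \geq 2\), at least two distinct values \(i(j)\) occur, so every \(f\) is degenerate.
\item For \(n = 1\), the \(f\) with \(\phi(0) = i(0)\) are exactly the nondegenerate ones, so \(G_1^{\nd} \iso G_0 \neq \varnothing\).
\end{itemize}
This gives both inequalities at once and never counts, so infinite \(G_n\) cause no trouble.

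\textbf{Step 2 also relies on a false claim}, namely that a degenerate element of \(G_1\) fixes both vertices.
\begin{itemize}
\item In \(\Xi_{\infty}\) one checks \(\sigma_0 \circ \rho_1 \circ \tau_1^{-1} = \rho_0 \circ \sigma_0\). So \(s_0(\rho_0)\) corresponds to the automorphism \(x \mapsto 1 - x\), which swaps \(0\) and \(1\).
\item In the reflexive crossed simplicial group, where \(\Aut(n) \iso \bZ/2\) acts on \([n]\) by order reversal, \(G\) is constant while \(\Aut(1)\) acts transitively on \([1]\). So transitivity alone can never produce your contradiction.
\end{itemize}
Freeness does suffice. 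Suppose \(\dim G = 0\); then \(s_0\colon G_0 \to G_1\) is bijective. If \(g \in \Aut(1)\) satisfies \(\sigma_0 \circ g = h \circ \sigma_0\) and \(g \circ v = v' \circ a\) for vertices \(v, v' \in \Delta(0,1)\), then composing with \(\sigma_0\) gives \(a = h\). Hence the composite \(\Aut(1) \to \Delta G(0,1) \iso [1] \times G_0 \to G_0\) (act on a fixed vertex, then project) would equal the bijection \(s_0^{-1}\). This is impossible: by Step 1 the first map is bijective, while the projection is two-to-one. With the codegeneracy argument above, however, this case needs no separate treatment.
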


\begin{proof}
    Let \(F\colon (\Delta G)^\op \to \Delta G\) be an equivalence.
    By the previous lemma, it is the identity on objects, and thus an isomorphism.
    We will show that every morphism \(f \in \Delta G(n, 0)\) for \(n \geq 2\) is degenerate.
    Write \(F(\sigma_j) = \delta_{i(j)} \circ g_j\) and \(F(f) = \phi \circ g\) where \(\phi \in \Delta(0, n)\).
    Note that \(j \mapsto i(j)\) is injective.
    Indeed, if \(i(j) = i(\ell)\) then
    \[ F(\sigma_j) = F(\sigma_\ell) \circ g_\ell^{-1} \circ g_j. \]
    Applying \(F^{-1}\) gives \(\sigma_j = h \circ \sigma_\ell\) for an automorphism \(h\) and \Cref{prop:crossed-reverse-codeg} implies \(j = \ell\).
    Choosing \(j\) such that \(i(j) \neq \phi(0)\) yields \(\phi = \delta_{i(j)} \circ \phi'\).
    We conclude
    \[ f = F^{-1}(\delta_{i(j)} \circ g_j \circ g_j^{-1} \circ \phi' \circ g) = F^{-1}(g_j^{-1} \circ \phi' \circ g) \circ \sigma_j \]
    and thus \(f\) is degenerate.
    For \(n = 1\) there is only \(F(\sigma_0) = \delta_{i(0)} \circ g_0\).
    The morphisms \(\phi \circ g \in \Delta G(0,n)\) which satisfy \(\phi(0) = i(0)\) give nondegenerate \(F^{-1}(\phi \circ g)\).
    They are indexed by any \(g \in G_0\), so \(G^\nd_1 \iso G_0 \neq \varnothing\) and \(\dim G = 1\).
\end{proof}

\begin{lemma} \label{lem:normalization-skeletal}
    \(\Nor(\Delta G)\) is skeletal.
\end{lemma}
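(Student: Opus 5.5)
To show that \(\Nor(\Delta G)\) is skeletal, I will prove that \(m \neq n\) implies \(m \not\iso n\) in \(\Nor(\Delta G)\). By definition \(\Nor(\Delta G)(m,n) = \Mod_{k[\Delta G]}(P_m, P_n)\), so the claim is equivalent to \(P_m \not\iso P_n\) in \(\Mod_{k[\Delta G]}\) for \(m \neq n\). The plan is to detect \(m\) from \(P_m\) through an invariant that is visibly preserved by isomorphisms of modules.

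The invariant is the lowest degree in which the module is nonzero. Let \(\ell(P) \coloneq \min\{ r : P(r) \neq 0 \}\). By \Cref{prop:projector-decomp}, \(P_m\) is a direct summand of the representable \(\cC^m\) with \(\cC = k[\Delta G]\), namely
\[ P_m(r) = u(p_m) \circ \cC(r,m). \]
First, \(P_m(m) \neq 0\): it contains \(p_m\) itself, and \(p_m\) is nonzero because its expansion in \Cref{rem:projector-expansion} has identity coefficient \(1\). Second, I claim \(P_m(r) = 0\) for \(r < m\). Write a morphism \(f \in \Delta G(r,m)\) in normal form \(f = \phi \circ g\) with \(\phi \in \Delta(r,m)\). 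For \(r < m\), the map \(\phi\) misses at least two elements of \([m]\), hence misses some \(i < m\), so \(\phi = \delta_i \circ \phi'\) for some \(i < m\). The proof of \Cref{prop:projector-decomp} gives \(p_m \circ \delta_i = 0\) for \(i < m\), hence \(p_m \circ f = 0\). Since \(\cC(r,m)\) is spanned by such \(f\), we get \(P_m(r) = 0\). Together these give \(\ell(P_m) = m\).

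Isomorphic modules have the same \(\ell\), so \(P_m \iso P_n\) forces \(m = n\), and \(\Nor(\Delta G)\) is skeletal. As a cross-check, the same conclusion follows from \Cref{cor:crossed-normalization-nonzero}: \(\Nor(\Delta G)(m,n) \neq 0\) requires \(n \geq m - d_m\) and \(n \leq m+1\), and the vanishing \(P_n(m) = 0\) for \(m < n\) shows that there is no nonzero map \(P_m \to P_n\) when \(n > m + 1\), consistent with that corollary.

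The only step needing care is that every \(\phi \in \Delta(r,m)\) with \(r < m\) factors through some \(\delta_i\) with \(i < m\) rather than only through \(\delta_m\). This holds because \(|\phi([r])| \leq r+1 \leq m\), so \(\phi\) misses at least one of the \(m+1\) points of \([m]\). If the missed point is \(m\), then since \(r < m\) the map still has image of size at most \(m < m+1\) within \([m]\), which forces at least two missed points, one of which is below \(m\). I expect this to be the main (minor) obstacle; everything else is bookkeeping with \Cref{prop:projector-decomp}.
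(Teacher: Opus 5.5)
There is a genuine gap, and it is at the step you flagged. The claim that every \(\phi \in \Delta(r,m)\) with \(r<m\) misses some \(i<m\) is false for \(r=m-1\). The coface \(\delta^m_m\) is injective with image \([m-1]\), so it misses only \(m\). Your count is off by one: an image of size at most \(m\) inside the \((m+1)\)-element set \([m]\) forces only one missed point. Two missed points are forced only when \(r \le m-2\).

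Moreover, \(p_m\circ\delta_m\) really is nonzero. \Cref{prop:projector-decomp} identifies \(\ker\yo(p_m)\) with the span of the images of \(\delta_i\) for \(i<m\) only. \Cref{ex:classical-dk} computes \(p_m\circ\delta_m=\big(\sum_{i=0}^m(-1)^i\delta_{m-i}\big)\circ p_{m-1}\), which is exactly where the differential comes from. Hence \(P_m(m-1)\neq 0\), so your invariant is \(\ell(P_m)=m-1\) for \(m\ge 1\) and \(\ell(P_0)=0\). It separates the \(P_m\) with \(m\ge 1\) from each other, but not \(P_0\) from \(P_1\); concretely, \(p_1\circ\delta_1=\delta_1-\delta_0\neq 0\), so \(P_1(0)\neq 0\).

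Your cross-check should have caught this. The vanishing you claim would give \(\Mod_{k[\Delta G]}(P_m,P_{m+1})=0\), i.e.\ \(\Nor(\Delta G)(m,m+1)=0\). But \Cref{cor:crossed-normalization-nonzero} says this module is nonzero, and the rank formula even gives \(\Nor(\Delta G)(m,m+1)\iso k[G_m]\). So your vanishing is not consistent with that corollary; it contradicts it.

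The fix is to use the covariant invariant, which is what the paper does: the largest \(x\) with \(\Nor(\Delta G)(m,x)\neq 0\) is \(x=m+1\), and this is injective in \(m\). The correct half of your argument gives the upper bound. For \(x\ge m+2\), every \(\phi\in\Delta(m,x)\) misses two points, so \(P_x(m)=0\). Since \(P_m\) is a retract of \(\cC^m\), \(\Mod_{k[\Delta G]}(P_m,P_x)\) is a retract of \(\Mod_{k[\Delta G]}(\cC^m,P_x)\iso P_x(m)=0\). Nonvanishing at \(x=m+1\) comes from \(\Nor(\Delta G)(m,m+1)\iso k[G_m]\neq 0\).

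Alternatively, keep \(\ell\) and add a separate argument that \(P_0\not\iso P_1\). For example, \(\Nor(\Delta G)(0,2)=0\) while \(\Nor(\Delta G)(1,2)\neq 0\), so \(\Mod_{k[\Delta G]}(P_0,P_2)\not\iso\Mod_{k[\Delta G]}(P_1,P_2)\).
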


\begin{proof}
    If \(m \iso n\), then we have \(\Nor(\Delta G)(m, x) \neq 0 \Leftrightarrow \Nor(\Delta G)(n, x) \neq 0\) for all \(x\).
    By \Cref{cor:crossed-normalization-nonzero}, the largest \(x\) with \(\Nor(\Delta G)(m, x) \neq 0\) is \(x = m + 1\). 
    We conclude \(m + 1 = n + 1\) and thus \(m = n\).
\end{proof}

\begin{theorem} \label{thm:crossed-self-strong-equiv}
     Every equivalence \(\Nor(\Delta G)^\op \simeq \Nor(\Delta G)\) is the identity on objects.
\end{theorem}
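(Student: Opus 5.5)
Let \(F\colon \Nor(\Delta G)^\op \to \Nor(\Delta G)\) be an equivalence. By \Cref{lem:normalization-skeletal}, \(\Nor(\Delta G)\) is skeletal, and hence so is its opposite. An equivalence between skeletal categories is bijective on objects, since isomorphic objects are equal. So \(F\) induces a bijection \(\pi\colon \bN \to \bN\) with
\[ \Nor(\Delta G)(m,n) \iso \Nor(\Delta G)(\pi(n), \pi(m)) \]
for all \(m, n\), and in particular the nonvanishing patterns correspond. The task is therefore reduced to a combinatorial statement: any bijection \(\pi\) of \(\bN\) satisfying
\[ \Nor(\Delta G)(m,n) \neq 0 \iff \Nor(\Delta G)(\pi(n),\pi(m)) \neq 0 \]
is the identity. \Cref{cor:crossed-normalization-nonzero} describes the relevant pattern completely: \(\Nor(\Delta G)(m,n) \neq 0\) if and only if \(m - d_m \leq n \leq m+1\).

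The plan is to recover each object from this pattern by an invariant that is mapped to its dual, in the spirit of \Cref{lem:crossed-simplicial-strong-equiv}. For each \(m\), row \(m\) has a largest nonzero column, namely \(m+1\). By the transport in \Cref{rem:crossed-structure-matrix}, the smallest \(m\) with \(\Nor(\Delta G)(m,n) \neq 0\) is \(n-1\), so column \(n\) has smallest nonzero row \(n-1\). These two facts give the cleanest invariant: "the number of objects \(x\) with \(\Nor(\Delta G)(x,m)\neq 0\) and \(\Nor(\Delta G)(m,x)\neq0\)". I would avoid it, however, because it requires controlling \(d_m\). Instead I would argue by induction, using the fact that the object \(0\) is special. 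Its row contains only \(\Nor(\Delta G)(0,0)\) and \(\Nor(\Delta G)(0,1)\), so there are exactly two nonzero entries, since \(d_0 = 0\).

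The decisive structural fact is the minimum behaviour. The unique \(m\) with \(\Nor(\Delta G)(m, \cdot)\) supported in \(\{m, m+1\}\) and with \(m\) minimal in its column is \(m=0\). Dually, under \(\pi\), \(\pi(0)\) must be an object whose column has exactly two nonzero entries. Column \(n\) contains the entries \((n-1,n)\) and \((n,n)\), together with the entries \((m,n)\) for \(m > n\) satisfying \(m - d_m \leq n\). If \(d_{n+1} \geq 1\), then \((n+1, n)\) is nonzero. The case to handle is \(\dim G = 0\), where all \(d_m = 0\) and the matrix is the bidiagonal \(\Ch^{\geq 0}\) pattern. In this case the column of \(0\) has only one nonzero entry \((0,0)\), while every other column has two. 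Rows, on the other hand, all have two entries. Hence no order-reversing bijection exists at all, and the statement holds vacuously. If \(\dim G \geq 1\), then \(G^{\nd}_0 \neq \varnothing\) and some \(G^{\nd}_r\) with \(r\geq1\) is nonempty. From the corollary, the row counts and column counts of nonzero entries are then preserved by \(\pi\) (rows go to columns). The leftmost column \(0\) has at least the entries \((0,0)\) and \((r,0)\), while row \(0\) has exactly two.

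The main obstacle is making this counting rigid enough to force \(\pi(n) = n\) for every \(n\). My approach is to prove \(\pi(n)=n\) by induction on \(n\), using the invariant "the largest \(x\) with \((n,x)\) nonzero is \(n+1\)" together with "the smallest \(x\) with \((x,n)\) nonzero is \(n-1\), or \(n\) if \(n=0\)". Under \(\pi\), the first invariant becomes the statement that the smallest \(y\) with \((y,\pi(n))\) nonzero is \(\pi(n+1)\). This gives \(\pi(n+1) = \pi(n)-1\) for \(\pi(n)\geq 1\). Since \(\pi\) is a bijection of \(\bN\), a sequence decreasing by one cannot continue indefinitely, which is a contradiction. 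So this order-reversing configuration fails, and the check must pin down that the adjacency "\(n \to n+1\)" is sent to "\(\pi(n+1)\to\pi(n)\)". Concretely, the goal is to combine the successor structure read from rows with the predecessor structure read from columns and deduce \(\pi(n+1)=\pi(n)+1\) with \(\pi(0)=0\). The delicate point is exactly that step, namely showing that \(\pi\) intertwines "largest in row" with "smallest in column" in the right direction. I would verify it carefully using \Cref{cor:crossed-normalization-nonzero} and the lower-diagonal transport from \Cref{rem:crossed-structure-matrix}.
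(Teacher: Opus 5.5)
Your reduction to a bijection \(\pi\) of \(\bN\) with \(\Nor(\Delta G)(m,n)\neq 0 \iff \Nor(\Delta G)(\pi(n),\pi(m))\neq 0\) is correct. So is your observation that no such \(\pi\) exists when \(\dim G=0\): every row has two nonzero entries, while column \(0\) has only one. Your base case can also be finished. If \(G_r^{\nd}\neq\varnothing\) for some \(r\geq 1\), every column \(n\geq 1\) contains the three entries \((n-1,n)\), \((n,n)\), \((n+r,n)\), so the two-entry column \(\pi(0)\) must be \(0\).

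The genuine gap is the inductive step, which is the real content of the theorem, and the mechanism you propose does not work. The anti-equivalence only gives \(\{y : \Nor(\Delta G)(y,\pi(n))\neq 0\}=\pi(\{x : \Nor(\Delta G)(n,x)\neq 0\})\). Since \(\pi\) is an arbitrary bijection, the \emph{largest} element of the row tells you nothing about which element of the column is \emph{smallest}. Your claim that the smallest such \(y\) is \(\pi(n+1)\) already fails for \(\pi=\id\): the smallest nonzero row in column \(n\geq1\) is \(n-1\). The resulting ``contradiction'' \(\pi(n+1)=\pi(n)-1\), taken at face value, would rule out every anti-equivalence. That contradicts the identity-on-objects self-dualities that exist when \(\dim G=1\) (e.g.\ for \(\Lambda\)). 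You then leave the decisive step explicitly unverified.

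The missing idea is to count inside a \(\pi\)-stable tail instead of tracking extremal positions. Suppose \(\pi(x)=x\) for all \(x<m\); then \(\pi\) permutes \(\bN\setminus[m-1]\). So \(\{x\geq m : \Nor(\Delta G)(m,x)\neq 0\}=\{m,m+1\}\) transports to the statement that exactly two objects \(y\geq m\), namely \(\pi(m)\) and \(\pi(m+1)\), satisfy \(\Nor(\Delta G)(y,\pi(m))\neq 0\). Let \(r\geq 1\) be any degree with \(G_r^{\nd}\neq\varnothing\), so that \(\Nor(\Delta G)(n+r,n)\neq 0\) for all \(n\). If \(\pi(m)>m\), then \(\pi(m)-1\), \(\pi(m)\) and \(\pi(m)+r\) are three distinct such objects, a contradiction. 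Hence \(\pi(m)\leq m\), and since \(\pi\) fixes \(0,\dots,m-1\), bijectivity forces \(\pi(m)=m\).

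This is the paper's proof. The paper produces \(r\) directly from \(F(r)=F(0)+1\), which makes the case \(\dim G=0\) disappear automatically. Your separate vacuous treatment of \(\dim G=0\) is an equally valid way to secure such an \(r\).
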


\begin{proof}
    Let \(F\colon \Nor(\Delta G)^\op \to \Nor(\Delta G)\) be an equivalence.
    Since \(\Nor(\Delta G)\) is skeletal, \(F\) is bijective on objects.
    Define \(r \in \bN\) by \(F(r) = F(0) + 1\). Then \(r > 0\) and
    \[ \Nor(\Delta G)(r, 0) \iso \Nor(\Delta G)(F(0), F(0) + 1) \neq 0. \]
    It follows that \(G_r^{\nd} \neq \varnothing\) and thus \(\Nor(\Delta G)(F(m), F(m+r)) \neq 0\) for all \(m\).
    We will now show by induction that \(F\) is the identity on objects on every initial segment of \(\bN\).
    For the empty segment this is trivial.
    Now suppose \(F(x) = x\) for \(x < m\).
    Then \(F\) restricts to a permutation on \(\bN \setminus [m-1]\).
    Observe that
    \begin{align*}
        \{ x \geq m : \Nor(\Delta G)(m, x) \neq 0 \} &= \{ m, m + 1 \} \\
        \iff \{ y \geq m : \Nor(\Delta G)(y, F(m)) \neq 0 \} &= \{ F(m), F(m+1) \}.
    \end{align*}
    Hence, there are two objects in \(\bN \setminus [m-1]\) with nonzero morphisms to \(F(m)\).
    Now, if \(F(m) > m\), then \(F(m) - 1\), \(F(m)\) and \(F(m) + r\) all belong to \(\bN \setminus [m-1]\) and admit nonzero morphisms to \(F(m)\), giving a contradiction.
    So we must have \(F(m) \leq m\) and thus \(F(m) = m\).
\end{proof}

\begin{corollary} \label{cor:self-dual-normal-dim}
    If \(\Nor(\Delta G)^\op \simeq \Nor(\Delta G)\), then \(\dim G = 1\).
\end{corollary}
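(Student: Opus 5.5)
By \Cref{thm:crossed-self-strong-equiv}, any linear equivalence \(F\colon \Nor(\Delta G)^\op \to \Nor(\Delta G)\) is the identity on objects. It is therefore an isomorphism of linear categories: an equivalence that is bijective on objects is fully faithful and bijective on objects. In particular we get isomorphisms of \(k\)-modules
\[ \Nor(\Delta G)(m,n) \iso \Nor(\Delta G)(n,m) \]
for all \(m,n \in \bN\). The plan is to compare the nonvanishing patterns on both sides using \Cref{cor:crossed-normalization-nonzero}. This should force the band below the diagonal in \Cref{rem:crossed-structure-matrix} to have width exactly one.

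First I rule out \(\dim G \geq 2\). Suppose \(G_r^{\nd} \neq \varnothing\) for some \(r \geq 2\). By part (3) of the corollary following \Cref{cor:crossed-normalization-nonzero}, \(\Nor(\Delta G)(n+r, n) \neq 0\) for every \(n\). Taking \(n = 0\) gives \(\Nor(\Delta G)(r,0) \neq 0\), so the isomorphism yields \(\Nor(\Delta G)(0,r) \neq 0\). But \Cref{cor:crossed-normalization-nonzero} with \(m = 0\) says \(\Nor(\Delta G)(0,x) \neq 0\) only for \(0 - d_0 \leq x \leq 1\), hence \(x \leq 1 < r\). This is a contradiction, so \(\dim G \leq 1\).

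Next I rule out \(\dim G \leq 0\). The diagonal \(\Nor(\Delta G)(0,1) \iso k[G_0]\) is nonzero because \(G_0\) is a group, hence nonempty. Under the isomorphism this gives \(\Nor(\Delta G)(1,0) \neq 0\). By \Cref{cor:crossed-normalization-nonzero} with \(m = 1\) this requires \(1 - d_1 \leq 0\), i.e.\ \(d_1 \geq 1\), so \(G_1^{\nd} \neq \varnothing\). Together with the first step, \(\dim G = 1\). The same argument also follows from the first paragraph of the proof of \Cref{thm:crossed-self-strong-equiv}, where \(r\) with \(F(r) = F(0)+1\) satisfies \(r > 0\) and \(G_r^{\nd} \neq \varnothing\); here \(F\) is the identity on objects, so \(r = 1\).

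There is no real obstacle once \Cref{thm:crossed-self-strong-equiv} is available. The only point needing care is that the invariant being compared, nonvanishing of hom-modules, is preserved. This requires \(F\) to be the identity on objects, which the theorem guarantees. It also requires \(k \neq 0\), so that free modules on nonempty sets are nonzero, and that is part of the standing assumptions.
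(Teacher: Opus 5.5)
Your proposal is correct and follows essentially the same route as the paper. You use \Cref{thm:crossed-self-strong-equiv} to get \(\Nor(\Delta G)(m,0) \iso \Nor(\Delta G)(0,m)\), rule out \(G_m^{\nd} \neq \varnothing\) for \(m \geq 2\) because \(\Nor(\Delta G)(0,m) = 0\), and obtain \(G_1^{\nd} \neq \varnothing\) from \(\Nor(\Delta G)(0,1) \neq 0\). The only cosmetic difference is that you read off the vanishing patterns via \Cref{cor:crossed-normalization-nonzero} and \(d_m\), where the paper uses the rank formula and \(\Nor(\Delta G)(m,0) \iso k[G_m^{\nd}]\) directly.
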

\begin{proof}
    \Cref{thm:crossed-self-strong-equiv} implies \(\Nor(\Delta G)(m, 0) = 0 \Leftrightarrow \Nor(\Delta G)(0, m) = 0\).
    The latter holds for all \(m \geq 2\) by the rank formula.
    Now the result follows from \(\Nor(\Delta G)(m,0) \iso k[G_m^{\nd}]\) and \(\Nor(\Delta G)(0,1) \neq 0\).
\end{proof}

We can now classify the crossed simplicial groups with self-dual normalization.

\begin{theorem} \label{thm:dim-one-classification}
    Let \(\Delta G\) be a crossed simplicial group with associated extension
    \[ \Delta K \to \Delta G \xrightarrow{p} \Delta H \]
    as in \Cref{thm:classification-extensions} where \(\Delta K\) is the kernel of \(p\).
    The following are equivalent.
    \begin{enumerate}
        \item There exists an equivalence \(\Nor(\Delta G)^\op \simeq \Nor(\Delta G)\).
        \item The underlying simplicial set \(G\) has dimension one.
        \item \(\Delta H\) is the cyclic or dihedral category and \(K\) is constant.
        \item There exists an equivalence \((\Delta G)^\op \simeq \Delta G\).
    \end{enumerate}
\end{theorem}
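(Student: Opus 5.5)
The equivalences (1)\(\Rightarrow\)(2) and (4)\(\Rightarrow\)(2) are already established as \Cref{cor:self-dual-normal-dim} and \Cref{cor:self-dual-dim}. The work that remains is (2)\(\Leftrightarrow\)(3), which is combinatorial, and (3)\(\Rightarrow\)(4) and (3)\(\Rightarrow\)(1), which are constructive. So I would prove the cycle (2)\(\Rightarrow\)(3)\(\Rightarrow\)(4) and add (3)\(\Rightarrow\)(1).

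For (2)\(\Leftrightarrow\)(3), I would use the degreewise short exact sequences \(1 \to K_n \to G_n \to H_n \to 1\) together with the fact that \(G_n\) is the set of automorphisms of \(n\). First I would check that \(\dim G \leq d\) forces \(\dim K \leq d\) and \(\dim H \leq d\). For \(H\) this holds because \(p\) is full, so a degenerate element maps to a degenerate one. For \(K\), the simplicial subset \(K \subseteq G\) is closed under degeneracies, and by uniqueness of the EZ-decomposition a simplex of \(K\) that is degenerate in \(G\) is degenerate in \(K\).

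Since \(K\) is a simplicial group, its dimension is \(0\) exactly when it is constant. It can never have dimension \(1\): a nonconstant simplicial group has nondegenerate simplices in arbitrarily high degree, because products of degenerate simplices such as \(s_0x\cdot s_1y\) are nondegenerate. Hence \(K\) is constant.

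Next I would run through Table 2 of \cite{dyckerhoff-kapranov-15}. The trivial case \(\Delta\), the reflexive case \(\{\pm1\}\), and the cases with symmetric or hyperoctahedral groups have dimensions \(0\), \(0\), \(\infty\) and \(\infty\). The cyclic \(\bZ/(n+1)\) and dihedral cases have dimension \(1\), since \(\tau_n\) is generated from \(\tau_1\). This shows \(\dim G = 1 \Rightarrow\) (3). For the converse I would check that an extension of the cyclic or dihedral category by a constant \(K\) has \(G_n^{\nd} = \varnothing\) for \(n \geq 2\). Here I would use a quotient presentation of \(\Delta G\) as a quotient of the paracyclic or paradihedral category, following the discussion in the introduction, where \(G_n\) is a quotient of \(\bZ\) or \(\bZ \rtimes \bZ/2\) generated by degree-one data.

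For (3)\(\Rightarrow\)(4) and (3)\(\Rightarrow\)(1), I would follow the plan announced in the introduction. For the paracyclic and paradihedral categories I would realise them as categories of \(\bZ\)-equivariant monotone maps \(\bZ \to \bZ\). The duality then sends a map to its order adjoint (say, its right adjoint), which reverses composition. This realises the duality \(\Delta_{\max}\)-compatibly as in \Cref{rem:normalization-base-change}, because on \(\Delta\) it restricts to the adjoint isomorphism \((\Delta_{\min})^{\op}\cong \Delta_{\max}\) composed with \(\Rev\).

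To pass to a general \(\Delta G\) satisfying (3), I would write it as the quotient of the paracyclic or paradihedral category by the relation identifying its \(G_n\)-orbits. I would then check that the order-adjoint duality preserves the corresponding congruence, since it acts on the central generator \(\tau^{n+1}\) by inversion. The resulting isomorphism \((\Delta G)^\op \cong \Delta G\) gives (4). Because it sits over an isomorphism \(k[\Delta_{\max}]^\op \cong k[\Delta_{\max}]\), \Cref{rem:normalization-base-change} yields (1).

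The main obstacle is this step: ensuring that the adjoint duality intertwines \(\Delta_{\max}\) with \(\Delta_{\max}^\op\) exactly, and not merely \(\Delta_{\min}\). It may need to be composed with \(\Rev\) or a shift, and it must be checked to descend through the quotient presentation compatibly with the constant kernel \(K\).
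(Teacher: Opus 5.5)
Your outline follows the paper's cycle, and your treatment of $K$ and of $\dim H\le 1$ is fine. There is, however, a genuine gap in (3)$\Rightarrow$(4), and hence in your route to (1).

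You want to present $\Delta G$ as a quotient of $\Lambda_\infty$ or $\Xi_\infty$, with $G_n$ a quotient of $\bZ$ or $\bZ\rtimes\bZ/2$. This is false in general, because the constant kernel $K$ can be an arbitrary group. Take $\Delta G(m,n)=\Lambda(m,n)\times K_0$ with componentwise composition. This is a crossed simplicial group with $H=\Lambda$, $K$ constant with value $K_0$, and $\dim G=1$. A full functor from $\Lambda_\infty$ or $\Xi_\infty$ to $\Delta G$ that is the identity on objects would make $\Aut_{\Delta G}(0)=K_0$ a quotient of $\bZ$ or of $D_\infty$. That fails for $K_0=(\bZ/2)^3$, and quotients of the paracyclic and paradihedral categories only reach cases like $\Lambda_N$ and $\Xi_N$. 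Your sketch of (3)$\Rightarrow$(2) rests on the same false claim, although that implication is not needed for the cycle.

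The missing idea is to enlarge $\Xi_\infty$ by $G_0$. The paper forms the pullback $\Delta P=\Xi_\infty\times_{\mathcal{B}C_2}\mathcal{B}G_0$ of pairs $(f,a)$ with $\sgn(f)=\eps(a)$. On it, $(f,a)\mapsto(\widehat{L}f,a^{-1})$ is a self-duality. The paper then builds a full functor $\Phi\colon\Delta P\to\Delta G$ over $\Delta H$, and this is where constancy of $K$ is really used:
\begin{itemize}
\item lifts of a morphism of $\Delta H$ form a $K_0$-torsor and are determined by their value at a vertex;
\item the lifts $t_n$ of $\tau_n$ are normalized by $t_n\circ v_0=v_1$;
\item the element $z\in K_0$ with $t_1\circ v_1=v_0\circ z$ satisfies $aza^{-1}=z^{\eps(a)}$.
\end{itemize}
The kernel congruence of $\Phi$ is then generated by $(\tau_n^{n+1},1)\sim(\id_n,z)$ (\Cref{thm:extension-quotient}). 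The lifted duality sends both sides to their inverses, so it descends (\Cref{cor:induced-self-duality}).

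A few smaller points.

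\textbf{Excluding $\dim H=0$.} In (2)$\Rightarrow$(3), the bound $\dim H\le 1$ still allows the trivial and reflexive types, so you must also show $\dim H\neq 0$. The paper argues as follows. If $\dim H=0$ and $g\in G_1$, write $p(g)=s_0(q)$ and lift $q$ to $a\in G_0$. Then $g\,s_0(a)^{-1}\in K_1=s_0(K_0)$, so $g=s_0(h)s_0(a)=s_0(ha)$ for some $h\in K_0$. Thus every element of $G_1$ is degenerate, contradicting $\dim G=1$.

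\textbf{The paradihedral case.} $\Xi_\infty$ is not a category of monotone equivariant maps, since $\rho$ reverses order. The extension $\widehat{L}(\rho_n)=\tau_n\circ\rho_n$ therefore has to be chosen and checked by hand (\Cref{prop:order-adjoint-paradihedral}).

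\textbf{Your ``main obstacle''.} The $\Delta_{\max}$-compatibility you single out is not the real difficulty. A duality sending $p_n$ to $q_n$ already suffices. In a $k[\Delta]$-ring both idempotents split off the same summand $\bigcap_{i<n}\ker(\sigma_i\circ -)$ of the representable. On $\Lambda_\infty$ and $\Xi_\infty$ one can alternatively compose with the twisting automorphism $T_a$ (\Cref{prop:paracyclic-selfduality}, \Cref{cor:paradihedral-self-duality}).
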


\begin{proof}
    We begin with \((1) \Rightarrow (2) \Rightarrow (3) \Rightarrow (1)\) and add the last claim at the end.

    \begin{itemize}
        \item \((1) \Rightarrow (2)\): This is \Cref{cor:self-dual-normal-dim}.
        \item \((2) \Rightarrow (3)\): Suppose \(\dim G = 1\). 
            Since \(K\) is a simplicial subset of \(G\) and \(G \to H\) is degreewise surjective, we have \(\dim K \leq 1\) and \(\dim H \leq 1\).
            We note that \(\dim K = 0\) since \(K\) is a finite-dimensional Kan complex (cf.~\cite[Lemma~8.2.8]{weibel94}) and therefore, must be constant.
            Next we show \(\dim H = 1\).
            Suppose we had instead \(\dim H = 0\). Let \(p\colon G \to H\) be the morphism in the exact sequence.
            Let \(g \in G_1\) and write \(p(g) = s_0(q)\) for some \(q \in H_0\). 
            Since \(p\) is degreewise surjective, we may choose \(a \in G_0\) with \(p(a) = q\).
            Using exactness and \(K_1 = s_0(K_0)\) we conclude \(g = s_0(h) s_0(a) = s_0(ha)\) for some \(h \in K_0\). 
            However, this contradicts \(\dim G = 1\).
            Thus, we must have \(\dim H = 1\).
            Applying the EZ-decomposition to \(H\) yields \(|H_n| = |H_0| + n \cdot |H_1^{\nd}|\).
            Using the classification of \(\Delta H\) by growth rates \cite[Table 2]{dyckerhoff-kapranov-15}, we deduce that \(\Delta H\) is the cyclic or dihedral category.
        \item \((3) \Rightarrow (1)\): This argument uses the technical machinery we develop in \Cref{sec:lifting-through-extensions}.
                By \Cref{thm:extension-quotient} and \Cref{cor:induced-self-duality} we recover \(\Delta G\) as a quotient of the construction \(\Delta P\) and obtain a self-duality on \(\Delta G\) whose linearization bijects the families of Dold--Puppe projectors \(p_n \mapsto q_n\). 
                This gives an isomorphism \(\Nor(\Delta G)^\op \iso \Nor(\Delta G)\) as described in \Cref{rem:normalization-base-change}.
        \item \((4) \Rightarrow (2)\): This is \Cref{cor:self-dual-dim}.
        \item \((3) \Rightarrow (4)\): This is \Cref{cor:induced-self-duality}. \qedhere
    \end{itemize}
 
\end{proof}

\section{Normalization of the cyclic family} \label{sec:cyclic-family}

In this section, we apply Morita normalization to the cyclic family of planar crossed simplicial groups.
Our starting point will be the Dwyer--Kan correspondence, which provides the normalization of \(\Lambda_+\).
We adapt the proof strategy of Dwyer and Kan given in \cite{dwyer-kan85} to our framework.
Note that a more direct Dold--Kan correspondence for the paracyclic category is given by Getzler in \cite{getzler26}.
We adopt Getzler's notation of the Dwyer--Kan operator and the Karoubi operator and compare the approaches in \Cref{rem:getzler-comparison}.

\begin{convention}
    In this section we will use \(\Delta_{\min}\) instead of \(\Delta_{\max}\). 
    Translating via the order reversal automorphism means we use the projectors \(q_n\) rather than \(p_n\).
\end{convention}

\begin{definition}
    We denote by \((-)^\circ \colon \Lambda_{\infty}^\op \to \Lambda_{\infty}\) the self-duality in \Cref{prop:paracyclic-selfduality} which restricts to a self-duality \(\Lambda_+^\op \to \Lambda_+\)
    and define the \emph{extra codegeneracies}
    \[ \sigma^n_n \coloneq (\delta^n_0)^\circ = \sigma^n_0 \circ \tau_n \in \Lambda_+(n, n-1) \quad \quad (n \geq 1).
    \]
    Furthermore, we define \(\Delta^\circ\) as the image of \(\Delta\) under \((-)^\circ\).
\end{definition}

\begin{remark}
    Using the extra codegeneracies, the self-duality \((-)^\circ\) acts by
    \begin{align*}
        (\delta^n_i)^\circ = \sigma^n_{n-i}, \quad (\sigma^n_j)^\circ = \delta^n_{n-j}, \quad (\tau_n)^\circ = \tau_n
    \end{align*}
    for \(0 \leq i \leq n\) and \(0 \leq j < n\).
\end{remark}

The following lemma is the equivalent of \cite[\nopp Remark~3.6]{dwyer-kan85}.
\begin{lemma}
    The diagram of inclusions
    \[\begin{tikzcd}[ampersand replacement=\&,cramped]
        {\Delta_{\min}} \&\& \Delta \\
        \\
        {\Delta^\circ} \&\& {\Lambda_+}
        \arrow[hook, from=1-1, to=1-3]
        \arrow[hook, from=1-1, to=3-1]
        \arrow[hook, from=1-3, to=3-3]
        \arrow[hook, from=3-1, to=3-3]
    \end{tikzcd}\]
    is a pushout in \(\Delta_{\min} \downarrow \Cat\).
\end{lemma}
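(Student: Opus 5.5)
The plan is to compare presentations by generators and relations. A pushout in \(\Delta_{\min} \downarrow \Cat\) of two categories under \(\Delta_{\min}\) is computed as an ordinary pushout of categories. If we have presentations of \(\Delta\) and \(\Delta^\circ\) that extend a presentation of \(\Delta_{\min}\), the pushout is presented by the union of the generators, amalgamated along those of \(\Delta_{\min}\), together with the union of the relations. So the task reduces to three steps: writing down these presentations, identifying the resulting presented category, and comparing it with \(\Lambda_+\).

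\emph{Step 1 (presentations).} We use that \(\Delta\) is presented by the cofaces \(\delta^n_i\) (\(0 \le i \le n\)) and codegeneracies \(\sigma^n_j\) (\(0 \le j < n\)) modulo the cosimplicial relations \eqref{eq:cosimplicial}. Next we check that \(\Delta_{\min}\) is presented by the subfamily \(\delta^n_i\) (\(1 \le i \le n\)), \(\sigma^n_j\) (\(0 \le j<n\)) modulo those relations of \eqref{eq:cosimplicial} that only involve these generators.

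For this we use the epi--mono normal form. A minimum-preserving map factors as a minimum-preserving surjection followed by a minimum-preserving injection, and the latter is a composite of \(\delta_i\) with \(i \ge 1\). The standard rewriting of words into normal form \(\delta_{i_1}\cdots\delta_{i_a}\sigma_{j_1}\cdots\sigma_{j_b}\) never creates a \(\delta_0\) from generators avoiding it. Each relation in \eqref{eq:cosimplicial} either preserves the set of indices \(\geq 1\) or shifts them by one without reaching \(0\), so the rewriting stays inside the subfamily. Transporting this presentation along the isomorphism \((-)^\circ\colon \Delta^\op \to \Delta^\circ\) presents \(\Delta^\circ\) by the generators \(\sigma^n_{n-i} = (\delta^n_i)^\circ\) and \(\delta^n_{n-j} = (\sigma^n_j)^\circ\). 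That is, the generators are \(\delta^n_1,\dots,\delta^n_n\) and \(\sigma^n_0,\dots,\sigma^n_n\), subject to the dualized relations. Its intersection with \(\Delta\) is precisely \(\Delta_{\min}\). On the shared generators the dualized relations restricted to \(\Delta_{\min}\) agree with the cosimplicial ones, which is a direct check using \Cref{rem:rev-action}-type index bookkeeping.

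\emph{Step 2 (identifying the pushout).} The pushout \(\mathcal{P}\) is then generated by \(\delta^n_0,\dots,\delta^n_n\) and \(\sigma^n_0,\dots,\sigma^n_n\), with the cosimplicial relations together with the extra relations involving the extra codegeneracy \(\sigma_n\). This is the Dwyer--Kan description of a duplicial object as a simplicial object with an extra degeneracy satisfying the usual identities. The canonical comparison \(\mathcal{P} \to \Lambda_+\) induced by the inclusions in the square is surjective on morphisms, because the remaining generator \(\tau_n\) of \(\Lambda_+\) lies in the image. Indeed, \(\tau_n\) can be expressed as a composite of an extra codegeneracy with a coface, namely \(\sigma^{n+1}_{n+1}\circ\delta^{n+1}_0\) up to the convention for \(\tau\) fixed in \Cref{prop:paracyclic-selfduality}. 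This is a consequence of the formula \(\sigma^n_n = \sigma^n_0\circ\tau_n\) together with the relation of \(\tau\) with the cofaces.

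\emph{Step 3 (inverse functor).} To construct an inverse \(\Lambda_+ \to \mathcal{P}\), we send \(\delta_i,\sigma_j\) to themselves and \(\tau_n\) to the composite found in Step 2. We then verify that every defining relation of \(\Lambda_+\) holds in \(\mathcal{P}\). These are the relations of \(\tau\) with faces and degeneracies; in \(\Lambda_+\) there is no relation \(\tau^{n+1}=\id\). Conversely, the relations for \(\sigma_n\) hold in \(\Lambda_+\), since \(\sigma_n = \sigma_0\tau_n\). The two functors are then mutually inverse on generators.

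I expect the main obstacle to be this relation check, in particular getting the index conventions for \(\tau_n\) and for the relations \(\tau_n\delta_i\), \(\tau_n\sigma_j\) exactly right so that the expression of \(\tau_n\) in terms of \(\sigma_{n+1}\) and \(\delta_0\) satisfies them. If this becomes unwieldy, I would fall back on a normal-form argument instead. Every morphism of \(\Lambda_+\) can be written uniquely as \(\phi\circ\tau^k\) with \(\phi\in\Delta\) and \(k \geq 0\). One then shows directly that a pair of functors on \(\Delta\) and \(\Delta^\circ\) agreeing on \(\Delta_{\min}\) extends uniquely by defining the image of \(\tau_n\) through \(\sigma_{n+1}\) and \(\delta_0\).
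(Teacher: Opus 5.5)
Your proof is correct, but it is organized differently from the paper's. The paper forms the pushout \(\cP=\Delta^\circ\coprod_{\Delta_{\min}}\Delta\) in \(\Cat\) and notes that the comparison functor \(\cP\to\Lambda_+\) is bijective on objects. It then obtains full faithfulness directly from the normal form of \(\Lambda_\infty\) (\Cref{cor:paracyclic-normalform}), without presenting either side. You instead present \(\cP\) by \(\delta^n_0,\dots,\delta^n_n,\sigma^n_0,\dots,\sigma^n_n\) subject to exactly the relations of \(\Delta\) and of \(\Delta^\circ\). You then build an inverse from a \(\delta,\sigma,\tau\)-presentation of \(\Lambda_+\).

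The underlying input is the same. That presentation of \(\Lambda_+\) is itself proved from the unique normal form \(\phi\circ\tau^k\) (\(k\geq 0\)) by moving \(\tau\)'s to the right. The paper's faithfulness claim tacitly needs every morphism of \(\cP\) to be rewritable as \(\phi\circ(\sigma_{n+1}\circ\delta_0)^k\) \emph{inside} \(\cP\), which is precisely the relation check you flag as the main obstacle. That check goes through with \(\tau_n=\sigma^{n+1}_{n+1}\circ\delta^{n+1}_0\), which holds exactly, so no convention caveat is needed. For example, \(\tau_n\delta_n=\delta_0\) follows from \(\delta_0\delta_n=\delta_{n+1}\delta_0\) in \(\Delta\) and \(\sigma_{n+1}\delta_{n+1}=\id\) in \(\Delta^\circ\). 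Likewise \(\tau_n\sigma_j=\sigma_{j+1}\tau_{n+1}\) (\(j<n\)) follows from \(\sigma_{j+1}\sigma_{n+2}=\sigma_{n+1}\sigma_{j+1}\) in \(\Delta^\circ\) and \(\sigma_{j+1}\delta_0=\delta_0\sigma_j\) in \(\Delta\). Your route gives an explicit Dwyer--Kan-style presentation of \(\Lambda_+\); the paper's gives brevity.

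Two side remarks. First, the presentation of \(\Delta_{\min}\), the identification of \(\Delta\cap\Delta^\circ\) and the consistency check on shared generators in Step 1 are not needed. Forming the pushout only requires a generating set of \(\Delta_{\min}\). Second, ``extra degeneracy satisfying the usual identities'' must exclude \(\sigma_n\delta_0=\delta_0\sigma_{n-1}\). That identity fails in \(\Lambda_+\), where the left side is \(\tau_{n-1}\). It is correctly absent from \(\cP\), since no relation of \(\Delta\) or \(\Delta^\circ\) involves both \(\delta_0\) and \(\sigma_n\).
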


\begin{proof}
    Let \(\cP = \Delta^\circ \coprod_{\Delta_{\min}} \Delta\) be the pushout in \(\Cat\) and \(F\colon \cP \to \Lambda_+\) the universal morphism of cocones.
    Since \(F\) is necessarily bijective on objects, it suffices to show that it is fully faithful.
    However, this follows by the normal form of \(\Lambda_{\infty}\) (cf.~\Cref{cor:paracyclic-normalform}).
    In particular, we have a pushout square in \(\Delta_{\min} \downarrow \Cat\).
\end{proof}

\begin{definition}
    Let \(\Du\) be the free linear category on the arrows \((u_n \colon n \to n+1)_{n \in \bN}\) and \((d_n\colon n+1 \to n)_{n \in \bN}\) subject to the relations \(u_{n+1} \circ u_{n} = 0\) and \(d_n \circ d_{n+1} = 0\).
\end{definition}

\begin{theorem}[Dwyer--Kan correspondence]
    The functor \(F\colon \Du \to \Nor(\Lambda_+)\), induced by
    \[ n \mapsto n, \quad u_n \mapsto [\delta^{n+1}_0], \quad d_n \mapsto [(-1)^{n+1} \sigma_{n+1}^{n+1}] \]
    is an isomorphism.
    In particular, the abstract Dold--Kan correspondence gives an equivalence
    \[ \Fun(\Lambda_+^\op, k\Mod) \iso \Mod_{k[\Lambda_+]} \simeq \Mod_{\Nor(\Lambda_+)} \iso \Mod_{\Du}. \]
\end{theorem}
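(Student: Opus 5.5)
The plan is to compute \(\Nor(\Lambda_+)\) directly: use the quotient description of \(\Nor(\cC)\) (with the projectors \(q_n\), since we work with \(\Delta_{\min}\)), find a basis of each hom-module via the rank formula of \Cref{thm:rank-thm}, and check that \(F\) is well defined and bijective on hom-modules.

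\textbf{Well-definedness.} We need the relations of \(\Du\) to hold in \(\Nor(\Lambda_+)\), i.e.
\[ [\delta_0^{n+2}] \circ [\delta_0^{n+1}] = 0 \quad\text{and}\quad [\sigma_{n+1}^{n+1}] \circ [\sigma_{n+2}^{n+2}] = 0. \]
Composition is \([g]\circ[f] = [g \circ q_r \circ f]\). By \Cref{rem:projector-expansion}, \(q_{n+1} = \id + \sum_{i\geq 1} \delta_i \circ a_i'\). Hence
\[ \delta_0 \circ q_{n+1} \circ \delta_0 = \delta_0\circ\delta_0 + \sum_{i \geq 1} \delta_0 \circ \delta_i \circ a_i' \circ \delta_0 . \]
Each summand is \(\delta_{i+1}\circ\delta_0\circ\cdots\) with \(i+1 \leq n+1 < n+2\), and \(\delta_0 \circ \delta_0 = \delta_1 \circ \delta_0\). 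So every term lies in \(\ker L\) and the first relation holds. The second relation is the image of the first under the self-duality \((-)^\circ\), which exchanges \(\delta_0\) and \(\sigma_n\). The cleanest route is to check that \((-)^\circ\) sends \(q_n\) to a projector with the same kernel and image as \(q_n\), or else to verify the relation directly with the expansion \(q_n = \id + \sum b_i'\circ\sigma_i\) and the relation \(\sigma_{n}\circ\sigma_{n+1} = \sigma_n\circ\sigma_n\) coming from \(\sigma_n = \sigma_0\tau_n\).

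\textbf{Ranks.} For \(\Lambda_+\) (paracyclic), \(G_n \iso \bZ\). This gives \(G_0^{\nd} = \bZ\), \(G_1^{\nd} \iso \bZ\setminus\{\text{degenerate}\}\), and \(\dim G = 1\). Since \(\bZ\) is infinite, the raw rank formula is unhelpful as it stands. I would therefore use the proof of \Cref{thm:rank-thm} qualitatively: \([\phi\circ g]\neq 0\) only if \(m-1 \leq n \leq m+1\), and for each \(g\) the number of admissible \(\phi\) is \(\binom{r+1}{m-n+1}\) with \(r\in\{0,1\}\). This count is still infinite, which cannot match \(\Du\), whose hom-modules \(\Du(m,n)\) are free of rank one for \(|m-n|\leq 1\), with endomorphisms spanned by \(\id\), \(u d\) and \(d u\).

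\textbf{The main obstacle.} The infinite count shows that the quotient description alone does not give a finite basis, so additional relations must be exploited. Following Dwyer--Kan, I would instead use the pushout lemma \(\Lambda_+ = \Delta^\circ \amalg_{\Delta_{\min}} \Delta\). Normalizing \(\Delta\) gives chain complexes, with differential \([\delta_0]\) up to sign by \Cref{ex:classical-dk} in reversed form. Normalizing \(\Delta^\circ\) gives cochain complexes, with differential \([\sigma_n]\). Normalizing \(\Delta_{\min}\) gives graded modules, by \Cref{ex:trivial}. I would then argue that normalization preserves this pushout, because \(\Nor\) is an equivalence of ring categories by \Cref{thm:abstract-dk}(2) and pushouts of rings under \(k[\Delta_{\min}]\) are transported. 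This yields \(\Nor(\Lambda_+) \iso \Ch^{\geq0}\amalg_{\text{discrete}}(\Ch^{\geq 0})^{\op}\)-type data, which is exactly the presentation of \(\Du\): the two families \(u\), \(d\) with \(u^2=0\), \(d^2=0\) and no mixed relations.

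The step I expect to be hardest is justifying that \(\Nor\) commutes with this pushout in \(\Ring_{k[\Delta_{\min}]}\). Two ingredients are needed. First, linearization sends pushouts in \(\Delta_{\min}\downarrow\Cat\) of bijective-on-objects functors to pushouts in \(\Ring_{k[\Delta_{\min}]}\); this holds because linearization is a left adjoint. Second, an equivalence of categories preserves pushouts. With both in hand, \(F\) is the induced comparison map, and the signs \((-1)^{n+1}\) are chosen to match the normalized differential of \(\Delta^\circ\).
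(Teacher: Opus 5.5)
Your closing argument is the paper's proof, and it is correct. You normalize the pushout square \(\Delta_{\min}\subseteq\Delta,\ \Delta^\circ\subseteq\Lambda_+\); the paper's lemma in fact shows this is a pushout in \(\Cat\), so linearization preserves it, as you say. You then use that \(\Nor\) is an equivalence to get \(\Nor(\Lambda_+)\iso\Nor(\Delta)\amalg_{k[\bN]}\Nor(\Delta^\circ)\). The two legs are \(\Ch^{\geq 0}\) (basis \([\delta^n_0]\)) and \(\Ch_{\geq 0}\) (basis \([\pm\sigma^n_n]\), via \((-)^\circ\), which fixes each \(q_n\)), and the presentation of \(\Du\) can then be read off. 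One step is left implicit. A pushout in \(k\Cat\) of identity-on-objects functors is again identity-on-objects, so the pushout in \(\Ring_{\Nor(\Delta_{\min})}\) is the pushout in \(k\Cat\). That is what licenses reading off generators and relations. On this route your separate well-definedness check is subsumed, and the signs are a convention that plays no role in bijectivity.

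The ``Ranks'' paragraph, however, is wrong on two counts, and the ``obstacle'' it motivates does not exist.

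First, \(\Lambda_+\) (the duplicial category) is not a crossed simplicial group. Any automorphism of \(n\) in \(\Lambda_+\) is some \(\tau_n^a\), and \(\tau_n^a\in\Lambda_+\) forces \(a\geq 0\); so only \(\id_n\) is invertible. The groups \(G_n\iso\bZ\) belong to \(\Lambda_\infty\), and \Cref{thm:rank-thm} does not apply to \(\Lambda_+\).

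Second, \(\Du\) does not have rank-one hom-modules. Only \(uu\) and \(dd\) are killed, so \(\Du(n,n)\iso k[x_n,y_n]/(x_ny_n)\) with \(x_n=d_n\circ u_n\) and \(y_n=u_{n-1}\circ d_{n-1}\). Likewise \(\Du(n,n+1)\) has basis \((u_n\circ d_n)^k\circ u_n\), \(k\geq 0\) --- exactly what your own phrase ``no mixed relations'' gives.

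Both sides therefore have countably infinite rank. A direct basis comparison via the quotient description is not blocked, only laborious; the pushout argument is preferable because it avoids that bookkeeping.

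A minor slip in your well-definedness computation: the kernel criterion for the \(q\)-normalization in degree \(n+2\) is \(\delta_i\circ f'\) with \(1\leq i\leq n+2\), not \(i<n+2\). The indices \(i+1\) run up to \(n+2\), but the conclusion stands.
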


\begin{proof}
    Since \(\Nor(-)\) is an equivalence, the diagram of inclusions
    \[\begin{tikzcd}[ampersand replacement=\&,cramped]
        {\Nor(\Delta_{\min})} \&\& {\Nor(\Delta)} \\
        \\
        {\Nor(\Delta^\circ)} \&\& {\Nor(\Lambda_+)}
        \arrow[hook, from=1-1, to=1-3]
        \arrow[hook, from=1-1, to=3-1]
        \arrow[hook, from=1-3, to=3-3]
        \arrow[hook, from=3-1, to=3-3]
    \end{tikzcd}\]
    is a pushout in \(\Ring_{\Nor(\Delta_{\min})}\) and thus in \(k\Cat\).
    By \Cref{ex:trivial}, \(\Nor(\Delta_{\min})\) is the linearization of the discrete category \(\bN\). Translating the basis of \Cref{ex:classical-dk} to \(\Delta_{\min}\), we obtain \(\Nor(\Delta) \iso \Ch^{\geq 0}\) with basis morphisms \([\delta^n_0]\). 
    The self-duality \((-)^\circ\) of \(\Lambda_+\) yields \(\Nor(\Delta^\circ) \iso \Ch_{\geq 0}\) with basis morphisms \([(-1)^n \sigma^n_n]\).
    We recognize \(F\) as the isomorphism of pushouts \(\Du \iso \Ch_{\geq 0} \coprod_{k[\bN]} \Ch^{\geq 0} \to \Nor(\Lambda_+)\).
\end{proof}

\begin{remark}
    The remaining normalizations of the cyclic family can be obtained from \(\Lambda_+\).
    Recall that every morphism \(f \in \Lambda_+(m,n)\) satisfies \(f(x + m + 1) = f(x) + n + 1\) for \(x \in \bZ\).
    We obtain a natural transformation \(T\colon \id_{\Lambda_+} \Rightarrow \id_{\Lambda_+}\) with components \(T_n = \tau_n^{n+1}\).
    \begin{itemize}
        \item \(\Lambda_{\infty}\) is the localization of \(\Lambda_+\) at \(T\).
        \item \(\Lambda\) is the quotient of \(\Lambda_+\) by the relation \(T = \id\).
        \item \(\Lambda_N\) is the quotient of \(\Lambda_+\) by the relation \(T^N = \id\). 
    \end{itemize}
    All constructions mentioned above are preserved by \(\Nor(-)\) as we have seen in \Cref{sec:normalization-standard-constructions}.
\end{remark}

In view of the Dwyer--Kan correspondence, it is useful to express these constructions as localizations and quotients of \(\Du\) with respect to the natural transformation 
\[ \pi \coloneq F^{-1}(\mathsf{T})\colon \id_{\Du} \Rightarrow \id_{\Du}. \]
The following two insights let us compute the components of \(\pi\).
We begin by studying the center of \(\Du\), following \cite[\nopp Theorem~6.3]{dwyer-kan85}.

\begin{remark} \label{rem:center-du}
    Let \(x_n \coloneq d_n \circ u_n\) and \(y_n \coloneq u_{n-1} \circ d_{n-1}\) where \(y_0 \coloneq 0\).
    Essentially by construction, we have 
    \[ \Du(0,0) \iso k[x_0], \quad \text{and} \quad \Du(n,n) \iso k[x_n, y_n] / (x_n \circ y_n) \quad (n \geq 1). \]
    Let \(\alpha\colon \id_{\Du} \Rightarrow \id_{\Du}\) and write \(\alpha_n = c_n + f_n(x_n) + g_n(y_n)\) for \(f_n, g_n \in t \cdot k[t]\).
    Naturality of \(\alpha\) can be verified on \(u_n\) and \(d_n\); it translates to \(c_n = c_{n+1}\) and \(f_n = g_{n+1}\).
    In particular, \(\alpha\) is fully determined by the constant \(c_0\) and the polynomials \(f_n\).
\end{remark}

We can therefore recover \(\pi\) by determining the components \(\pi_n \bmod y_n\).
Taking the quotient \(\Du / \la \id_r : r < n \ra\) forces the downward loop \(y_n\) to become zero.
The resulting quotient functor takes \(\pi_n\) to \((\pi_n \bmod y_n)\).
As we will show now, this construction is also compatible with normalization.

\begin{proposition}
    We have \(\Nor(\cC / \la \id_r : r < n \ra) \iso \Nor(\cC) / \la \id_r : r < n \ra\) for every \(\cC \in \Ring_{k[\Delta_{\min}]}\). 
    In particular, normalization induces a \(k\)-algebra isomorphism
    \[ \cC(n,n) / I(n,n) \iso \Nor(\cC) / J(n,n) \]
    where \(I \subseteq \cC\) and \(J \subseteq \Nor(\cC)\) are the ideals \(\la \id_r : r < n \ra\) above.
\end{proposition}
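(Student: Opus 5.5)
The plan is to derive the proposition from the quotient remark in \Cref{sec:normalization-standard-constructions}. That remark identifies \(\Nor(\cC/I)\) with \(\Nor(\cC)/\widebar{I}\), where \(\widebar{I}(m,n) = q_n \circ I(m,n) \circ q_m\) (we work with \(\Delta_{\min}\), so \(q\) replaces \(p\)). So the first claim reduces to showing that for \(I = \la \id_r : r < n \ra\) the ideal \(\widebar{I}\) equals the ideal \(J = \la \id_r : r < n\ra\) of \(\Nor(\cC)\).

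First I would describe \(I\) concretely. Its elements in \(\cC(a,b)\) are sums of composites \(g \circ f\) with \(f \in \cC(a,r)\), \(g \in \cC(r,b)\) and \(r < n\). Under the identification \(\Nor(\cC)(a,b) \iso q_b \circ \cC(a,b) \circ q_a\) from the quotient construction, the identity of \(r\) in \(\Nor(\cC)\) is the class \([q_r] = [\id_r]\). Composition is given by \([g]\circ[f] = [g \circ q_r \circ f]\).

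Next I would check the two inclusions. For \(J \subseteq \widebar{I}\): an element of \(J(a,b)\) is a sum of terms \([g]\circ[\id_r]\circ[f] = [g \circ q_r \circ f]\). As an element of \(q_b\,\cC(a,b)\,q_a\), such a term is \(q_b \circ g \circ q_r \circ f \circ q_a\), and \(g \circ q_r \circ f\) factors through \(r<n\), so it lies in \(I(a,b)\). For \(\widebar{I} \subseteq J\): a generator \(q_b \circ g \circ f \circ q_a\) with \(f\colon a \to r\), \(r<n\), is the composite \([g]\circ[\id_r]\circ[f]\) up to the inserted \(q_r\). Since \(q_r\) is itself a morphism of \(\cC\) through \(r\), the difference \(g\circ f - g \circ q_r \circ f = g \circ (\id - q_r)\circ f\) still factors through \(r\). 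By \Cref{rem:projector-expansion}, \(\id - q_r\) is a sum of terms \(\delta_i \circ a_i'\), so it factors through \(r-1<n\). Inducting downward on \(r\), every such element is a sum of elements of the form \([g']\circ[\id_{r'}]\circ[f']\) with \(r' \le r\). This is the main subtle step; it may be simpler to note directly that \(\widebar{I}\) is generated as an ideal of \(\Nor(\cC)\) by the images \([\id_r]=q_r\,\id_r\,q_r\). Once this holds, the universal isomorphism \(\Nor(\cC)/\widebar I \to \Nor(\cC/I)\) gives the first claim.

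For the \(k\)-algebra statement, I would take \((n,n)\) endomorphisms and show that \(\Nor(\cC/I)(n,n) \iso (\cC/I)(n,n)\). In \(\cC/I\) all objects \(r<n\) become zero, since their identities vanish. The projector \(q_n = \id + \sum_i \delta_i \circ a_i'\) has correction terms factoring through \(n-1\), so \(q_n = \id_n\) in \(\cC/I\). Hence \(\Nor(\cC/I)(n,n) = q_n\,(\cC/I)(n,n)\,q_n = (\cC/I)(n,n) = \cC(n,n)/I(n,n)\), and composition agrees because \([g]\circ[f] = [g q_n f] = [gf]\). Combining with the first part gives \(\cC(n,n)/I(n,n) \iso \Nor(\cC)(n,n)/J(n,n)\) as algebras.
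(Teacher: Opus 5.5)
Your proposal is correct and follows essentially the same route as the paper. The paper also reduces to the quotient remark, uses \(q_r - \id_r = \sum_i \delta_i \circ a_i'\) inductively to get \(\la \id_r : r < n \ra = \la q_r : r < n \ra\) in \(\cC\) (your downward induction is the same computation carried out inside \(\Nor(\cC)\)), and finishes with \(q_n = \id_n\) in \(\cC/I\). Your suggested shortcut of noting ``directly'' that the normalized ideal is generated by the \([\id_r]\) is not easier: it is exactly what your induction proves, so you can drop it.
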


\begin{proof}
    Let \(\cC\) be a \(k[\Delta_{\min}]\)-ring.
    Recall (cf.~\Cref{rem:projector-expansion}) that \(q_n - \id_n = \sum_{i=1}^n \delta_i \circ a_i'\) factors through objects \(r < n\).
    By induction, we conclude \(\la \id_r : r < n \ra = \la q_r : r < n \ra\).
    Therefore, its normalization is the ideal \(\la \id_r : r < n \ra \subseteq \Nor(\cC)\) which gives the isomorphism of categories.
    Since \([q_n]\) is the identity in \(\cC / I\), normalization is multiplicative at \(n\).
\end{proof}

We recover \cite[\nopp Proposition~6.5]{dwyer-kan85} directly from the index categories.

\begin{corollary} \label{cor:dwyer-kan-operator}
    \(\pi_n = (\id - x_n)^{n+1} + (\id - y_n)^{n} - \id = (\id - x_n)^{n+1} \circ (\id - y_n)^{n}\)
\end{corollary}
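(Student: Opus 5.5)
The plan is to use \Cref{rem:center-du}, which says that a natural endomorphism \(\alpha\) of \(\id_{\Du}\) is determined by its constant term \(c_0\) and the polynomials \(f_n\) with \(\alpha_n \equiv c_n + f_n(x_n) \bmod y_n\). Naturality then forces \(g_{n} = f_{n-1}\) and \(c_n = c_0\). So it suffices to compute \(\pi_n \bmod y_n\) for every \(n\). By the preceding discussion this reduction is exactly passage to the quotient \(\Du/\la \id_r : r<n\ra\). Under the Dwyer--Kan isomorphism \(F\), the preceding proposition identifies this quotient at the object \(n\) with the \(k\)-algebra \(k[\Lambda_+](n,n)/I(n,n)\), where \(I\) is the ideal of morphisms factoring through objects \(r<n\).

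\textbf{Step 1: describe the quotient algebra.} By the normal form of \(\Lambda_+\), a morphism \(n\to n\) is \(\phi\circ g\) with \(g\) an automorphism. It factors through \(r<n\) unless \(\phi\) is the identity. Hence \(k[\Lambda_+](n,n)/I(n,n)\) is the group algebra of \(\Aut_{\Lambda_+}(n)\iso\bZ\), generated by \(\tau_n\). In this algebra the component \(T_n=\tau_n^{n+1}\) is just the \((n+1)\)-st power of the generator.

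\textbf{Step 2: compute \(x_n\) in the quotient (main obstacle).} By definition \(x_n = F^{-1}\) of \([(-1)^{n+1}\sigma^{n+1}_{n+1}]\circ[\delta^{n+1}_0]\). Using the composition rule \([g]\circ[f]=[g\circ q_{n+1}\circ f]\), we need to evaluate
\[ (-1)^{n+1}\,\sigma^{n+1}_{n+1}\circ q_{n+1}\circ \delta^{n+1}_0 \bmod I. \]
I would expand \(q_{n+1}=\prod(\id-\delta_{i+1}\sigma_i)\) as a signed sum over subsets. Then I would push each term into normal form using the cosimplicial relations and \(\sigma_{n+1}=\sigma_0\circ\tau_{n+1}\), together with the paracyclic relations \(\tau\delta_i=\delta_{i-1}\tau\), etc. Only terms whose \(\Delta\)-part collapses to \(\id_n\) survive modulo \(I\), and the expected outcome is
\[ x_n \equiv \id - (-1)^n\tau_n \pmod{I}, \]
i.e.\ \(\id-x_n\) reduces to the Karoubi operator. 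The bookkeeping of which subsets survive, and with what sign, is the delicate point. I would first try an induction on \(n\), or a telescoping of the product, in the style of the computation \(p_n\circ\delta_n=(\sum(-1)^i\delta_{n-i})\circ p_{n-1}\) in \Cref{ex:classical-dk}.

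\textbf{Step 3: conclude.} From Step 2 we get \((\id-x_n)^{n+1}\equiv(-1)^{n(n+1)}\tau_n^{n+1}=T_n\). Hence \(\pi_n\equiv(\id-x_n)^{n+1} \bmod y_n\), giving \(c_n=1\) and \(f_n(t)=(1-t)^{n+1}-1\). Naturality gives \(g_n(t)=f_{n-1}(t)=(1-t)^n-1\). Substituting into \(\pi_n=c_n+f_n(x_n)+g_n(y_n)\) yields the first expression. The second equality follows by expanding the product \((\id-x_n)^{n+1}\circ(\id-y_n)^n\) and using \(x_n\circ y_n=0=y_n\circ x_n\), so that all mixed terms vanish. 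The case \(n=0\) is consistent, since \(y_0=0\).
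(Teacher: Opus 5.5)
Your overall architecture is the paper's. You reduce to \(\pi_n \bmod y_n\) via \Cref{rem:center-du} and pass to the quotient by \(\la \id_r : r<n\ra\), so that the comparison \(k[\Lambda_+](n,n)/I(n,n) \to \Du(n,n)/J(n,n) \iso k[x_n]\) is multiplicative. You then send \(T_n=\tau_n^{n+1}\) to the \((n+1)\)-st power of the image of \(\tau_n\), and rebuild \(\pi\) from \(c_n=1\), \(f_n\) and \(g_n=f_{n-1}\). Your opening reduction and Step~3 are fine, including the vanishing of mixed terms via \(x_n y_n = 0 = y_n x_n\).

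The gap is Step~2, which is the only substantive computation in the corollary. You state \(x_n \equiv \id-(-1)^n\tau_n\) only as the \emph{expected} outcome. The sign bookkeeping of your subset expansion of \(q_{n+1}\) is left open. As written, the argument therefore rests on an unverified claim at exactly the point that carries the content.

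The missing idea is to move \(q_{n+1}\) past \(\delta_0\) instead of expanding it. Apply \(\Rev\) to the identity \(p_{n+1}\circ\delta_{n+1} = \big(\sum_{i=0}^{n+1}(-1)^i\delta_{n+1-i}\big)\circ p_n\) from \Cref{ex:classical-dk}. This gives \(q_{n+1}\circ\delta_0=\big(\sum_{i=0}^{n+1}(-1)^i\delta_i\big)\circ q_n\), hence
\[
  \sigma_{n+1}\circ q_{n+1}\circ\delta_0 = \sum_{i=0}^{n+1} (-1)^i\, \sigma_{n+1}\circ\delta_i\circ q_n .
\]
Checking on \(\bZ\), one has \(\sigma_{n+1}\circ\delta_0=\tau_n\) and \(\sigma_{n+1}\circ\delta_{n+1}=\id_n\). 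For \(1\le i\le n\) one has \(\sigma_{n+1}\circ\delta_i=\delta_i\circ\sigma_n\), and these terms are annihilated by \(q_n\circ(-)\). Hence \([\sigma_{n+1}]\circ[\delta_0]=[\tau_n]+(-1)^{n+1}\id\) in \(\Nor(\Lambda_+)\). With \(F(u_n)=[\delta_0]\) and \(F(d_n)=(-1)^{n+1}[\sigma_{n+1}]\), this yields \(F^{-1}([\tau_n])=(-1)^n(\id-x_n)\) exactly, not just modulo \(I\). No induction or subset count is needed.

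There is also a smaller error in Step~1. \(\Lambda_+\) is not a crossed simplicial group, and \(\Aut_{\Lambda_+}(n)\) is trivial, since \(\tau_n^{-1}(0)=-1<0\). Its normal form is \(\phi\circ\tau_n^a\) with \(a\ge 0\). So \(k[\Lambda_+](n,n)/I(n,n)\) is the polynomial algebra \(k[\tau_n]\), not a group algebra of \(\bZ\). This is consistent with its isomorphism to \(k[x_n]\), in which \(\id-x_n\) is not invertible. Your argument only uses multiplicativity and \(T_n=\tau_n^{n+1}\), so this does not affect the conclusion, but the statement should be corrected.
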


\begin{proof}
    The isomorphisms \(\Nor(k[\Lambda_+] / I) \iso \Nor(\Lambda_+) / I \iso \Du / I\) with \(I = \la \id_r : r < n \ra\) induce a \(k\)-algebra map
    \[ \phi\colon k[\Lambda_+](n,n) \to k[\Lambda_+](n,n) / I(n,n) \iso \Du(n,n) / I(n,n) \iso k[x_n] \]
    where \(\phi(f) = F^{-1}([f]) \bmod y_n\).
    In particular, we have \(\phi(\tau_n^{n+1}) = \phi(\tau_n)^{n+1}\).
    This reduces the problem to finding \(F^{-1}([\tau_n])\).
    Noting that \(\tau_n = \sigma_{n+1} \circ \delta_0\), we compute its relative \([\sigma_{n+1}] \circ [\delta_{0}]\) in \(\Nor(\Lambda_+)\).
    Using \(q_{n+1} \circ \delta_0 = \Rev(p_{n+1} \circ \delta_{n+1})\) and \Cref{ex:classical-dk}, we obtain \(q_{n+1} \circ \delta_0 = \big( \sum_{i=0}^{n+1} (-1)^i \delta_i \big) \circ q_{n}\).
    Therefore,
    \begin{align*}
        \sigma_{n+1} \circ q_{n+1} \circ \delta_0 &= \big( \sigma_{n+1} \circ \delta_0 + (-1)^{n+1} \sigma_n \circ \delta_{n+1} \big) \circ q_n + \sum_{i=1}^{n} (-1)^i \delta_i \circ c_i \\
        &= \big( \tau_n + (-1)^{n+1} \, \id_n \big) \circ q_n + \psi
    \end{align*}
    for certain \(c_i\). In particular, we have \([\psi] = 0\) and thus \([\sigma_{n+1}] \circ [\delta_0] = [\tau_n] + (-1)^{n+1} \, \id\).
    Using \(F(u_n) = [\delta_0]\) and \(F(d_n) = (-1)^{n+1} [\sigma_{n+1}]\), we conclude \(F^{-1}([\tau_n]) = (-1)^n (\id - x_n)\) and thus, \(\pi_n \bmod y_n = \big( (-1)^n (\id - x_n) \big)^{n+1} = (\id - x_n)^{n+1}\). Now the components of \(\pi\) can be reconstructed from \Cref{rem:center-du} via
    \[ c_n = 1, \quad f_n(t) = (1 - t)^{n+1} - 1, \quad g_n(t) = (1 - t)^n - 1. \qedhere \]
\end{proof}

\begin{remark} \label{rem:getzler-comparison}
    In \cite[\nopp Theorem~4.2]{getzler26} Getzler characterizes \(\Mod_{\Nor(\Lambda_{\infty})}\) as duchain complexes with invertible Karoubi operator.
    We can recover a similar result from Morita normalization.
    To obtain matching sign conventions, we define the Karoubi operator \(\kappa\colon \id_{\Du} \Rightarrow \id_{\Du}\) with components
    \[ \kappa_n \coloneq \id - (x_n + y_n) = (\id - x_n) \circ (\id - y_n). \]
    Under \(\Du \iso \Nor(\Lambda_+)\) it corresponds to \(F(\kappa_n) = (-1)^n [\tau_n - \delta_0 \circ \sigma_n]\) and \(F(\kappa_0) = [\tau_0]\).
    Using \Cref{cor:dwyer-kan-operator}, we obtain the identities \(\pi_n = \kappa_n^n \circ (\id - x_n)\) and \(\kappa_n^{n+1} = \pi_n \circ (\id - y_n)\).
    It follows that \(\pi_n\) is invertible if and only if \(\kappa_n\) is invertible.
    This gives an isomorphism of linear categories \(\Nor(\Lambda_\infty) \iso \Du[\pi^{-1}] \iso \Du[\kappa^{-1}]\).
\end{remark}

\begin{corollary} \label{cor:cyclic-family-normalized}
    We summarize the normalization of the cyclic family.
    \begin{align*}
        \Nor(\Lambda_+) &\iso \Du, \\
        \Nor(\Lambda_{\infty}) &\iso \Du[\pi^{-1}] \iso \Du[\kappa^{-1}], \\
        \Nor(\Lambda) &\iso \Du / \la \pi - \id \ra, \\
        \Nor(\Lambda_{N}) &\iso \Du / \la \pi^N - \id \ra. 
    \end{align*}
\end{corollary}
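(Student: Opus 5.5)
The plan is to derive each line of the summary from the isomorphism \(F\colon \Du \iso \Nor(\Lambda_+)\) of the Dwyer--Kan correspondence, combined with the compatibility of normalization with localizations and quotients from \Cref{sec:normalization-standard-constructions}. The first line is exactly the Dwyer--Kan correspondence. For the other three, start from the preceding remark: \(\Lambda_\infty \iso k[\Lambda_+][T^{-1}]\) after linearization, \(k[\Lambda] \iso k[\Lambda_+]/\la T - \id\ra\), and \(k[\Lambda_N] \iso k[\Lambda_+]/\la T^N - \id \ra\). All of these are \(k[\Delta_{\min}]\)-rings via \(\Lambda_+\), so the remarks on localization and quotients apply.

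\textbf{Localization.} The localization remark gives \(\Nor(k[\Lambda_+][T^{-1}]) \iso \Nor(\Lambda_+)[\widebar{T}^{-1}]\). Here \(\widebar{T}_n = [q_n \circ T_n \circ q_n]\) is the natural transformation of \(\Nor(\Lambda_+)\) that \(F\) pulls back to \(\pi\). Since \(F\) is an isomorphism it transports localizations, so \(\Nor(\Lambda_\infty) \iso \Du[\pi^{-1}]\). \Cref{rem:getzler-comparison} shows that \(\pi_n\) is invertible exactly when \(\kappa_n\) is, via \(\pi_n = \kappa_n^n(\id - x_n)\) and \(\kappa_n^{n+1} = \pi_n(\id - y_n)\). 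These two localizations therefore have the same universal property, giving \(\Du[\pi^{-1}] \iso \Du[\kappa^{-1}]\).

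\textbf{Quotients.} Let \(I = \la T^N - \id\ra\), with \(N=1\) for \(\Lambda\). The quotient remark gives \(\Nor(\Lambda_+/I) \iso \Nor(\Lambda_+)/\widebar{I}\), where \(\widebar{I}(m,n) = q_n \circ I(m,n) \circ q_m\). Since \(T\) is natural, \(T^N - \id\) is natural as well, so it commutes with all morphisms and in particular with the projectors. Every element of \(I(m,n)\) is a sum of terms \(g\circ(T^N_m - \id)\circ f\), which can be rewritten as \((T^N_n - \id)\circ h\). It follows that \(\widebar{I}(m,n) = q_n (T_n^N - \id) q_n \circ \Nor\)-morphisms, so \(\widebar{I}\) is the ideal generated by the components \(\widebar{T}^N - \id = \widebar{T^N - \id}\). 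Transporting along \(F\) gives \(\Du/\la \pi^N - \id\ra\).

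\textbf{Main obstacle.} The one point needing care is the identity \(\widebar{T^N} = \widebar{T}^N\), that is, that \(\eta \mapsto \widebar{\eta}\) is multiplicative on natural endotransformations. This follows from naturality: \(q_n T_n q_n \, q_n T_n q_n = q_n T_n q_n T_n q_n = q_n q_n T_n T_n q_n\), using \(T_n q_n = q_n T_n\) and \(q_n^2 = q_n\). The same argument shows that \(\widebar{\id} = \id\), so \(\widebar{T^N - \id}\) corresponds to \(\pi^N - \id\) under \(F\).
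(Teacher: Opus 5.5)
Your proposal is correct and follows the same route as the paper. It combines the Dwyer--Kan isomorphism \(F\), the compatibility of \(\Nor\) with localizations and quotients from \Cref{sec:normalization-standard-constructions} applied to \(T\), transport along \(F\) via \(\pi = F^{-1}(\overline{T})\), and the identities of \Cref{rem:getzler-comparison} to pass from \(\pi\) to \(\kappa\). The details you add, namely multiplicativity of \(\eta \mapsto \overline{\eta}\) and the fact that \(\overline{I}\) is generated by the components of \(\overline{T}^N - \id\), are exactly what the paper leaves implicit, and they check out; only note that in \(g \circ (T^N_m - \id) \circ f\) the component index should be the intermediate object rather than \(m\).
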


\section{Self-duality of crossed simplicial groups} \label{sec:crossed-self-duality}

Following the work of Elmendorf \cite{elmendorf93}, we are going to construct the paracyclic category by suitably extending \(\Delta\) in such a way that order adjoints always exist.
We recall (cf.~\Cref{rem:order-adjoint-chain}) that we have a chain of adjunctions \(\delta^n_{i+1} \dashv \sigma^n_i \dashv \delta_i^n\) for \(0 \leq i < n\). 
However, \(\delta^n_n\) lacks a left adjoint and \(\delta^n_0\) lacks a right adjoint. 
We can remedy this problem by extending \(\phi\colon [m] \to [n]\) to \(\tilde \phi\colon \bZ \to \bZ\) via the rule \(\tilde{\phi}(x + m + 1) = \tilde{\phi}(x) + n + 1\). 
This idea leads to the definition of the paracyclic category which we introduce below.

\begin{proposition}
    Let \(f\colon \bZ \to \bZ\) be weakly increasing. Then \(f\) admits a left adjoint if and only if \(f\) is unbounded in both directions. In this case, the left adjoint \(Lf \dashv f\) can be computed via \(Lf(x) = \min \{ y \in \bZ : x \leq f(y) \}\).
\end{proposition}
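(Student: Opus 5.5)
We have to prove that a weakly increasing \(f\colon \bZ \to \bZ\) has a left adjoint if and only if \(f\) is unbounded in both directions, and that the adjoint is then given by the stated formula. The argument is the same Galois-connection bookkeeping as in \Cref{prop:adjoints-in-simplex}. We regard \(\bZ\) as a poset category. A left adjoint \(g \dashv f\) is then a weakly increasing \(g\colon \bZ \to \bZ\) satisfying
\[ g(x) \leq y \iff x \leq f(y) \quad \text{for all } x, y \in \bZ. \]
For each \(x \in \bZ\) set \(T(x) \coloneq \{ y \in \bZ : x \leq f(y) \}\). The adjunction condition says exactly that \(T(x) = \{ y : y \geq g(x) \}\). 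So a left adjoint exists precisely when every \(T(x)\) is a nonempty up-set that is bounded below, and in that case \(g(x) = \min T(x)\). This already gives uniqueness and the stated formula for \(Lf\).

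\emph{Necessity.} Suppose \(g \dashv f\) exists and fix \(x\). Taking \(y = g(x)\) gives \(x \leq f(g(x))\). Since \(x\) is arbitrary, \(f\) is unbounded above. For unboundedness below, suppose \(f(y) \geq c\) for all \(y\). Then \(x = c\) satisfies \(x \leq f(y)\) for every \(y\), so \(g(c) \leq y\) for all \(y \in \bZ\). This is impossible, since \(\bZ\) has no minimum. Hence \(f\) is unbounded below.

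\emph{Sufficiency.} Assume \(f\) is unbounded in both directions.
\begin{itemize}
    \item \(T(x)\) is nonempty because \(f\) is unbounded above.
    \item \(T(x)\) is an up-set because \(f\) is monotone.
    \item \(T(x)\) is bounded below: since \(f\) is unbounded below, there is \(y_0\) with \(f(y_0) < x\). Monotonicity gives \(f(y) < x\) for all \(y \leq y_0\), so every element of \(T(x)\) exceeds \(y_0\).
\end{itemize}
A nonempty subset of \(\bZ\) that is bounded below has a minimum, so \(Lf(x) \coloneq \min T(x)\) is well defined. If \(x \leq x'\), then \(T(x') \subseteq T(x)\), so \(Lf\) is weakly increasing. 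Finally, because \(T(x)\) is an up-set with minimum \(Lf(x)\), we get \(T(x) = \{ y : y \geq Lf(x) \}\), which is exactly the adjunction \(Lf \dashv f\).

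The only step requiring any care is the boundedness-below argument. It is precisely there that unboundedness of \(f\) below is used, in direct analogy with the role of the condition \(\phi(m) = n\) in the finite case.
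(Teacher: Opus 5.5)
Your proof is correct and is essentially the paper's argument: you characterize \(Lf(x)\) as the minimum of \(\{y : x \leq f(y)\}\) and read off when these sets are nonempty and have minima, while writing out the necessity and sufficiency checks that the paper only asserts. One small slip in your closing remark: in the finite case the condition \(\phi(m) = n\) corresponds to nonemptiness of these sets, i.e.\ to unboundedness \emph{above}, whereas existence of minima is automatic in \([m]\), so the analogy points the other way.
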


\begin{proof}
    The adjunction condition is \(Lf(x) \leq y \iff x \leq f(y)\) for all \(x,y \in \bZ\). Consider the sets \(S(x) \coloneq \{ y \in \bZ : x \leq f(y) \}\). Assuming \(Lf\) exists, the adjunction implies \(Lf(x) = \min S(x)\). Note that \(S(x)\) is nonempty for all \(x\) precisely if \(f\) admits no upper bound. Furthermore, every \(S(x)\) admits a minimum precisely if \(f\) admits no lower bound. 
\end{proof}

\begin{corollary} \label{cor:existence-adjoints}
    Every weakly increasing map \(f\colon \bZ \to \bZ\) which is unbounded in both directions admits left and right adjoints \(Lf \dashv f \dashv Rf\). The right adjoint can be computed via \(Rf(y) = \max \{ x \in \bZ : f(x) \leq y \}\). The assignments \(f \mapsto Lf\) and \(f \mapsto Rf\) are mutual inverses. Furthermore, we have \(Rf(y) = Lf(y+1)-1\).
\end{corollary}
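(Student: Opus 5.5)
The plan is to deduce everything from the preceding proposition, applied both to \(f\) and to the reversed map \(f^\vee(x) \coloneq -f(-x)\). If \(f\) is weakly increasing and unbounded in both directions, then so is \(f^\vee\). Hence \(f^\vee\) has a left adjoint \(Lf^\vee\), with \(Lf^\vee(x) = \min\{y : x \leq -f(-y)\}\).

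\emph{Existence of \(Rf\).} Negation \(x \mapsto -x\) is an order-reversing involution of \(\bZ\), so it swaps left and right adjoints. Concretely, start from \(Lf^\vee(x) \leq y \iff x \leq f^\vee(y)\) and substitute \(x = -b\), \(y = -a\). This gives \(f(a) \leq b \iff a \leq -Lf^\vee(-b)\). So \(Rf(b) \coloneq -Lf^\vee(-b)\) is a right adjoint of \(f\). Unwinding the minimum gives \(Rf(y) = \max\{x : f(x) \leq y\}\). Alternatively, one can argue directly as in the proposition: the sets \(T(y) = \{x : f(x) \leq y\}\) are nonempty because \(f\) is unbounded below, and bounded above because \(f\) is unbounded above.

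\emph{Mutual inverses.} First I would check that \(Lf\) and \(Rf\) are again unbounded in both directions, so that \(R(Lf)\) and \(L(Rf)\) make sense. This follows since \(Lf(x) \leq y \iff x \leq f(y)\): letting \(x \to \pm\infty\) forces \(Lf\) to be unbounded, and the same argument works for \(Rf\). Then \(Lf \dashv f\) says precisely that \(f\) is a right adjoint of \(Lf\). Adjoints in a poset are unique, so \(R(Lf) = f\). Dually, \(L(Rf) = f\). Uniqueness of adjoints in a poset is immediate, since \(g(x) \leq y \iff h(x) \leq y\) for all \(y\) forces \(g = h\).

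\emph{The formula \(Rf(y) = Lf(y+1) - 1\).} This is where the integers, rather than a general poset, are used, and it is the only mildly delicate step. Set \(m = Lf(y+1) = \min\{x : y+1 \leq f(x)\}\). For \(x < m\) we have \(f(x) \leq y\), while \(f(m) \geq y+1 > y\). Since \(f\) is monotone, \(\{x : f(x) \leq y\} = \{x : x \leq m-1\}\), whose maximum is \(m-1\). This is the claimed formula; the key point is that \(f(x) < y+1\) is equivalent to \(f(x) \leq y\) in \(\bZ\).
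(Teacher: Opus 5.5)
Your proposal is correct and follows essentially the same route as the paper. Like the paper, you obtain \(Rf\) by dualizing along the order-reversing involution of \(\bZ\) (made explicit via \(f^\vee\), where the paper just says ``formal dualization''), derive \(RLf = f = LRf\) from uniqueness of adjoints, and get \(Rf(y) = Lf(y+1)-1\) from the discreteness fact \(f(x) \leq y \iff \neg(y+1 \leq f(x))\); your direct check that \(Lf\) and \(Rf\) are again unbounded in both directions is a slightly more careful version of the paper's remark that \(Lf\) and \(Rf\) also admit adjoints.
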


\begin{proof}
    Since the poset category \(\bZ\) is self-dual, a left adjoint of \(f\) exists if and only if a right adjoint exists. Hence, if \(f\) admits adjoints, so do \(Lf\) and \(Rf\). By uniqueness of adjoints, we have \(RLf = f = LRf\). The defining equation for \(Rf\) is obtained by formal dualization. It remains to show \(Rf(y) = Lf(y+1)-1\). For all \(x,y \in \bZ\) we have 
    \begin{align*}
        x \leq Rf(y) &\iff f(x) \leq y \\
        &\iff \neg(y + 1 \leq f(x)) \\
        &\iff \neg (Lf(y+1) \leq x) \\
        &\iff x + 1 \leq Lf(y+1) \\
        &\iff x \leq Lf(y + 1) - 1,
    \end{align*}
    hence the result follows by uniqueness of the adjoint.
\end{proof}

\begin{proposition} \label{prop:reverse-spiral}
    Let \(f\colon \bZ \to \bZ\) be weakly increasing. If \(f(x + m + 1) = f(x) + n + 1\) for all \(x \in \bZ\), then \(Lf(x + n + 1) = Lf(x) + m + 1\) for all \(x \in \bZ\). The same holds for \(Rf\).
\end{proposition}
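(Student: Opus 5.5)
Note first that the hypothesis makes \(f\) unbounded in both directions: from \(f(x + m + 1) = f(x) + n + 1\) we get \(f(x + k(m+1)) = f(x) + k(n+1)\) for all \(k \in \bZ\). Hence \(Lf\) and \(Rf\) exist by \Cref{cor:existence-adjoints}, and the statement is meaningful. We treat \(Lf\) through the explicit formula \(Lf(x) = \min S(x)\), where \(S(x) = \{ y \in \bZ : x \leq f(y) \}\).

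The key step is to compare the sets \(S(x + n + 1)\) and \(S(x)\). For \(y \in \bZ\) we have
\[ x + n + 1 \leq f(y) \iff x \leq f(y) - n - 1 = f(y - m - 1) \iff y - m - 1 \in S(x). \]
Thus \(S(x + n + 1) = S(x) + (m + 1)\) as subsets of \(\bZ\). Translation preserves minima, so \(Lf(x + n + 1) = \min S(x) + m + 1 = Lf(x) + m + 1\).

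For \(Rf\) there are two routes. We can run the dual argument with \(T(y) = \{ x : f(x) \leq y \}\) and maxima, which gives \(T(y + n + 1) = T(y) + (m+1)\) in the same way. Alternatively, we use the identity \(Rf(y) = Lf(y+1) - 1\) from \Cref{cor:existence-adjoints}:
\[ Rf(y + n + 1) = Lf(y + 1 + n + 1) - 1 = Lf(y+1) + m + 1 - 1 = Rf(y) + m + 1. \]
Either way the claim for \(Rf\) follows.

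There is no real obstacle here. The only points needing care are checking unboundedness, so that the adjoints exist, and getting the direction of the shift right: the period roles of \(m\) and \(n\) swap for the adjoint.
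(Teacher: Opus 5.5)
Your proof is correct and is essentially the paper's argument. Both rest on the identity \(f(y) - n - 1 = f(y - m - 1)\): the paper plugs it into the adjunction to get \(Lf(x+n+1) \leq y \iff Lf(x) + m + 1 \leq y\) for all \(y\), while you plug it into the equivalent formula \(Lf(x) = \min S(x)\) to get \(S(x+n+1) = S(x) + (m+1)\). Your unboundedness check and your derivation of the \(Rf\) case from \(Rf(y) = Lf(y+1) - 1\) spell out what the paper leaves implicit or covers with ``by duality''.
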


\begin{proof}
    We apply the adjunction:
    \begin{align*}
        Lf(x + n + 1) \leq y &\iff x \leq f(y) - n - 1 = f(y - m - 1) \\
        &\iff Lf(x) + m + 1 \leq y.
    \end{align*}
    Since this holds for all \(x,y \in \bZ\), we conclude \(Lf(x + n + 1) = Lf(x) + m + 1\). The result for \(Rf\) follows by duality.
\end{proof}

\subsection{The cyclic family}

\begin{definition}
    The \emph{paracyclic category} \(\Lambda_{\infty}\) is given by the following data. The objects are natural numbers and a morphism \(f \in \Lambda_{\infty}(m,n)\) is given by a weakly increasing map \(f\colon \bZ \to \bZ\) satisfying the spiraling property \(f(x + m + 1) = f(x) + n + 1\) for all \(x \in \bZ\). Morphisms are composed as maps of sets. We identify \(\Delta\) as a wide subcategory of \(\Lambda_{\infty}\) by extending the morphisms of \(\Delta\) via the spiraling condition. Furthermore, we denote by \(\tau_n \in \Lambda_{\infty}(n,n)\) the automorphism given by the map \(\tau\colon \bZ \to \bZ\) where \(\tau(x) = x + 1\).
\end{definition}

\begin{theorem} \label{thm:paracyclic-self-duality}
    The assignment \(f \mapsto Lf\) extends to a self-duality \(L\colon \Lambda_{\infty}^\op \to \Lambda_{\infty}\) which is the identity on objects. Its inverse is given by \(R\).
\end{theorem}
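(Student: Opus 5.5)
The plan is to verify directly that \(L\) is a well-defined contravariant functor on \(\Lambda_{\infty}\) which is the identity on objects and has inverse \(R\). All the needed order-theoretic input is already available from \Cref{cor:existence-adjoints} and \Cref{prop:reverse-spiral}.

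\textbf{Step 1: \(L\) lands in \(\Lambda_{\infty}\).} Let \(f \in \Lambda_{\infty}(m,n)\). The spiraling property \(f(x+m+1) = f(x)+n+1\) forces \(f\) to be unbounded in both directions. Indeed, \(f(x + k(m+1)) = f(x) + k(n+1)\) and \(n+1 \geq 1\), so letting \(k \to \pm\infty\) gives values tending to \(\pm\infty\). Hence \Cref{cor:existence-adjoints} provides a weakly increasing left adjoint \(Lf\colon \bZ \to \bZ\). By \Cref{prop:reverse-spiral}, \(Lf\) satisfies \(Lf(x+n+1) = Lf(x)+m+1\), so \(Lf \in \Lambda_{\infty}(n,m)\). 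Thus \(L\) sends \(\Lambda_{\infty}(m,n)\) to \(\Lambda_{\infty}^{\op}(m,n) = \Lambda_{\infty}(n,m)\) and is the identity on objects. The same argument shows that \(R\) sends \(\Lambda_{\infty}(m,n)\) to \(\Lambda_{\infty}(n,m)\).

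\textbf{Step 2: functoriality.} Regard \(\bZ\) as a poset category, so that a weakly increasing map is a functor and the adjunctions above are honest adjunctions in \(\Cat\). Identities are self-adjoint, so \(L(\id) = \id\). For composable \(f\) and \(g\), the adjunctions \(Lf \dashv f\) and \(Lg \dashv g\) compose to \(Lf \circ Lg \dashv g \circ f\). By uniqueness of adjoints, which holds strictly in a poset, \(L(g \circ f) = Lf \circ Lg\). This is exactly contravariant functoriality, so \(L\colon \Lambda_{\infty}^{\op} \to \Lambda_{\infty}\) is a functor. The same argument makes \(R\) a functor \(\Lambda_{\infty}^{\op} \to \Lambda_{\infty}\), or equivalently \(\Lambda_{\infty} \to \Lambda_{\infty}^{\op}\).

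\textbf{Step 3: inverse.} \Cref{cor:existence-adjoints} gives \(RLf = f = LRf\) on morphisms, and both functors are the identity on objects. Hence \(R\) is a strict inverse of \(L\), and \(L\) is an isomorphism of categories, in particular a self-duality.

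I expect no real obstacle. The only point requiring care is Step 1, namely checking unboundedness from the spiraling condition, which uses \(n+1 \geq 1\). One should also be careful about variance: \(f \in \Lambda_{\infty}(m,n)\) yields \(Lf \in \Lambda_{\infty}(n,m)\), which is what a functor out of \(\Lambda_{\infty}^{\op}\) requires.
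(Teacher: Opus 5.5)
Your proposal is correct and follows the paper's approach: the paper's proof simply cites \Cref{cor:existence-adjoints} and \Cref{prop:reverse-spiral}. You have filled in the details it leaves implicit, namely unboundedness from the spiraling property and functoriality via composing adjunctions and strict uniqueness of adjoints in a poset.
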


\begin{proof}
    This follows immediately from \Cref{cor:existence-adjoints} and \Cref{prop:reverse-spiral}.
\end{proof}

\begin{remark}
    The self-duality \(L\) of \(\Lambda_{\infty}\) we have constructed is the one given by Elmendorf in \cite{elmendorf93}. By direct computation using the adjunctions, we obtain
    \[
        L(\delta^n_i) = \begin{cases}
            \sigma_i^n, & \text{if } i < n, \\
            \sigma_0^n \circ \tau_n, & \text{if } i = n
        \end{cases}, 
        \quad L(\sigma_j^n) = \delta_{j+1}^n, \quad L(\tau_n) = \tau_n^{-1}.
    \]
    In particular, we have \(L(p_n) = q_n = \Rev(p_n)\). While this is already enough to show that \(\Nor(\Lambda_{\infty})\) is self-dual, we present two modifications below that preserve the respective projector families and the shift automorphisms \(\tau_n\).
\end{remark}

\begin{definition} \label{def:twisting-automorphism}
    For every sequence of integers \(a = (a_n)_{n \in \bN} \subseteq \bZ\) we define a \emph{twisting automorphism} \(T_a\colon \Lambda_{\infty} \to \Lambda_{\infty}\) which is the identity on objects and maps \(f \in \Lambda_{\infty}(m,n)\) to \(T_a f\) where \(T_a f(x) \coloneq a_n - f(a_m - x)\) for all \(x \in \bZ\).
\end{definition}

\begin{definition}
    We denote by \(\Lambda_+\) the wide subcategory of \(\Lambda_{\infty}\) where 
    \[ \Lambda_{+}(m,n) = \{ f \in \Lambda_{\infty}(m,n) : f(0) \geq 0 \}. \]
\end{definition}

\begin{proposition} \label{prop:paracyclic-selfduality}
    Let \(a = (a_n)\) and \(b = (b_n)\) be the sequences \(a_n = n\) and \(b_n = 0\).
    \begin{enumerate}
        \item \(T_a \circ L\) preserves the family \((p_n)\).
        \item \(T_b \circ L \eqcolon (-)^\circ \) preserves the family \((q_n)\) and restricts to a self-duality of \(\Lambda_+\).
    \end{enumerate}
    Furthermore, both self-dualities preserve \(\tau_n\).
\end{proposition}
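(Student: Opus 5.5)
The plan is to reduce everything to a few explicit computations on generators. Throughout, "preserves \((p_n)\)" means that the contravariant functor sends \(p_n \in \Lambda_\infty(n,n)\) to \(p_n\), and likewise for \(q_n\) and \(\tau_n\).

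First I would check that for any sequence \(a\) the twisting \(T_a\) is a covariant, involutive automorphism of \(\Lambda_\infty\) that is the identity on objects.
\begin{itemize}
\item Monotonicity is preserved, since \(x \mapsto a_m - x\) and \(y \mapsto a_n - y\) are both order-reversing.
\item The spiraling property is preserved: \(T_a f(x+m+1) = a_n - f(a_m - x - m - 1) = a_n - f(a_m - x) + n + 1\).
\item Functoriality is the one-line computation \(T_a g(T_a f(x)) = a_k - g(a_n - a_n + f(a_m - x)) = T_a(g \circ f)(x)\).
\item Applying the formula twice gives \(T_a T_a = \id\).
\end{itemize}
Hence, by \Cref{thm:paracyclic-self-duality}, both \(T_a \circ L\) and \(T_b \circ L\) are self-dualities of \(\Lambda_\infty\). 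For \(a_n = n\), restricting to \([m]\) gives \(T_a f(x) = n - f(m-x)\). So \(T_a\) restricts to \(\Rev\) on \(\Delta\) (\Cref{def:order-reversal-aut}). For \(b_n = 0\) one has \(T_b f(x) = -f(-x)\), and the relation \(T_b f = \tau_n^{-n} \circ T_a f \circ \tau_m^{m}\) is useful for translating between the two twistings.

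Next I would compute the images of the generators \(\delta^n_i\), \(\sigma^n_j\) and \(\tau_n\). The values of \(L\) are listed in the remark after \Cref{thm:paracyclic-self-duality}: \(L(\delta_i)=\sigma_i\) for \(i<n\), \(L(\delta_n)=\sigma_0\tau_n\), \(L(\sigma_j)=\delta_{j+1}\), \(L(\tau_n)=\tau_n^{-1}\). Applying \(T_a\), which acts as \(\Rev\) on \(\Delta\), gives the following.
\begin{itemize}
\item \(T_aL(\delta_i\sigma_i) = \Rev(\delta_{i+1}\sigma_i) = \delta_{n-1-i}\sigma_{n-1-i}\) for \(0 \le i < n\).
\item \(T_a L(\tau_n)(x) = n - (n - x - 1) = x + 1\), so \(\tau_n\) is fixed.
\end{itemize}
Since \(T_aL\) is contravariant, it reverses the order of the factors in \(p_n = \prod_{i=0}^{n-1}(\id - \delta_i\sigma_i)\). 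The reindexing \(i \mapsto n-1-i\) then restores exactly the original ordered product, so \(T_aL(p_n) = p_n\).

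For \((-)^\circ = T_b L\), I would verify the formulas \((\delta_i)^\circ = \sigma_{n-i}\) and \((\sigma_j)^\circ = \delta_{n-j}\) directly from the descriptions \(Lf(x) = \min\{y : x \le f(y)\}\) and \(T_bf(x) = -f(-x)\). Here \(\sigma_n\) denotes the extra codegeneracy \(\sigma_0\tau_n\). This is best done by evaluating on a fundamental domain \([0,n]\) and using the spiraling property. From these formulas, \((\delta_i\sigma_{i-1})^\circ = \delta_{n-i+1}\sigma_{n-i}\). Reversing the order of composition and reindexing by \(j = n-i\) turns \(\prod_{i=n}^{1}(\id - \delta_i\sigma_{i-1})\) back into \(q_n\). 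The check \(\tau_n^\circ = \tau_n\) is the same computation as for \(T_a\): \(-( -(-x) - 1) = x + 1\).

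Finally, I would show that \((-)^\circ\) restricts to \(\Lambda_+\).
\begin{itemize}
\item If \(f(0) \ge 0\), then \(0\) lies in \(\{y : 0 \le f(y)\}\), so \(Lf(0) \le 0\) and hence \(T_bLf(0) = -Lf(0) \ge 0\).
\item For the restriction to be a self-duality, I also need the inverse \(R \circ T_b\) to preserve \(\Lambda_+\). If \(T_b f(0) \ge 0\) then \(f(0) \le 0\), so \(Rf(0) = \max\{x : f(x) \le 0\} \ge 0\).
\end{itemize}
The main obstacle I expect is the bookkeeping in the generator computations for \(T_bL\), in particular at the boundary indices \(\delta_0\) and \(\delta_n\), where the \(\tau_n\)-twist appears and the spiraling extension must be tracked carefully. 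I would handle these by explicit evaluation on \([0,n]\) rather than by abstract manipulation.
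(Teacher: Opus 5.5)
Your proposal is correct and takes essentially the paper's route: $T_a$ acts as $\Rev$ on $\Delta$ (the paper shortcuts your generator computation via $L(p_n)=q_n=\Rev(p_n)$), preservation of $(q_n)$ is the same direct computation, the restriction to $\Lambda_+$ uses the same adjunction equivalence $T_bLf(0)\geq 0 \Leftrightarrow f(0)\geq 0$, and $\tau_n$ is fixed because $L(\tau_n)=\tau_n^{-1}$ and $T_c(\tau_n^{-1})=\tau_n$ for any sequence $c$. One cosmetic slip: your expression for $\tau_n^\circ$ should read $-\bigl((-x)-1\bigr)=x+1$.
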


\begin{proof}
    \(T_a\) acts on \(\Delta \subseteq \Lambda_{\infty}\) as \(\Rev\), which shows (1). By direct computation, we see that \(T_b \circ L\) preserves \((q_n)\). Using the adjunction, we have \(T_b L f(0) \geq 0 \Leftrightarrow f(0) \geq 0\) which shows claim (2). To see that both self-dualities preserve \(\tau_n\), we note that in fact \(T_c(\tau_n) = \tau_n^{-1}\) for every sequence \(c = (c_n)\) of integers.
\end{proof}

\begin{remark}
    The functor \((-)^\circ\colon \Lambda_+^\op \to \Lambda_+\) has been used by Kowalzig and Krähmer
    \cite{kowalzig-kraehmer-2011} to construct generalized cyclic dualities.
\end{remark}

\begin{proposition}
    The automorphisms of the object \(n \in \Lambda_{\infty}\) are precisely \(\{ \tau_n^a : a \in \bZ \}\).
\end{proposition}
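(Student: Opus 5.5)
Every \(\tau_n^a\) is an automorphism of \(n\), since \(\tau_n\) is invertible with inverse \(x \mapsto x - 1\), which again satisfies the spiraling property with \(m = n\). The content of the statement is therefore the converse: every automorphism of \(n\) in \(\Lambda_{\infty}\) is a power of \(\tau_n\).

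Let \(f \in \Lambda_{\infty}(n,n)\) be invertible, with inverse \(g\). The composites \(g \circ f\) and \(f \circ g\) are the identity maps of \(\bZ\), because composition in \(\Lambda_{\infty}\) is composition of maps of sets. Hence \(f\colon \bZ \to \bZ\) is a bijection. A weakly increasing bijection of \(\bZ\) is strictly increasing, and it must hit every integer. It follows that \(f(x+1) = f(x) + 1\) for all \(x\), since otherwise the integer \(f(x)+1\) would be skipped. Consequently \(f(x) = x + a\) with \(a \coloneq f(0)\), and this is exactly \(\tau_n^a\), because \(\tau_n^a(x) = x + a\).

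It remains to check that the powers \(\tau_n^a\) are pairwise distinct, so that the automorphism group is the infinite cyclic group \(\{\tau_n^a : a \in \bZ\}\) with no relations. This holds because the maps \(x \mapsto x + a\) are distinct for distinct \(a\). No step is a real obstacle; the only point needing care is the observation that invertibility in \(\Lambda_{\infty}\) forces bijectivity on \(\bZ\), which is immediate since morphisms are composed as maps of sets.
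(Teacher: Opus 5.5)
Your proposal is correct and follows essentially the same argument as the paper: invertibility makes the underlying map a weakly increasing bijection of \(\bZ\), hence \(f(x+1) = f(x)+1\), so \(f = \tau_n^{f(0)}\). Your extra checks that each \(\tau_n^a\) lies in \(\Lambda_{\infty}\) and that the powers are pairwise distinct are harmless additions the paper leaves implicit.
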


\begin{proof}
    Let \(f \in \Lambda_{\infty}(n,n)\) be an automorphism. The underlying map \(f\colon \bZ \to \bZ\) is a strictly increasing bijection, so it satisfies \(f(x + 1) = f(x) + 1\) for all \(x \in \bZ\). By induction, we conclude \(f(x) = f(0) + x\) and thus \(f = \tau_n^{a}\) where \(a = f(0)\).
\end{proof}

\begin{corollary} \label{cor:paracyclic-normalform}
    \(\Lambda_{\infty}\) is a crossed simplicial group with \(\Aut_{\Lambda_{\infty}}(n) \iso \bZ\).
\end{corollary}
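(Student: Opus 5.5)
We need to show that \(\Lambda_{\infty}\) is a crossed simplicial group with \(\Aut_{\Lambda_{\infty}}(n) \iso \bZ\). That is, every \(f \in \Lambda_{\infty}(m,n)\) factors uniquely as \(f = \phi \circ \tau_m^a\) with \(\phi \in \Delta(m,n)\), extended spirally, and \(a \in \bZ\). The previous proposition already identifies the automorphism group as \(\{\tau_n^a\}\), and \(a \mapsto \tau_n^a\) is injective because \(\tau_n^a(0) = a\). So \(\Aut(n) \iso \bZ\), and only the normal form remains. The inclusion \(\Delta \hookrightarrow \Lambda_{\infty}\) is faithful, since a spiral map restricted to \([m]\) recovers \(\phi\), and it is bijective on objects. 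Concretely, a morphism of \(\Delta\) corresponds exactly to a spiral map \(g\) with \(g([m]) \subseteq [n]\).

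\textbf{Existence.} Given \(f\), I look at the window \(\{x, x+1, \dots, x+m\}\) for varying \(x\). Set
\[ a \coloneq \min \{ x \in \bZ : f(x) \geq 0 \}, \]
which exists because \(f\) is weakly increasing and unbounded in both directions by the spiral property. Put \(\phi \coloneq f \circ \tau^{-a}\), i.e. \(\phi(y) = f(y + a)\). This is again weakly increasing and spiral of type \((m,n)\), hence lies in \(\Lambda_{\infty}(m,n)\). It remains to check \(\phi([m]) \subseteq [n]\):
\begin{itemize}
\item \(\phi(0) = f(a) \geq 0\) by the choice of \(a\);
\item \(f(a-1) < 0\) by minimality of \(a\), so \(f(a-1) \leq -1\);
\item the spiral property gives \(\phi(m) = f(a+m) = f(a-1) + n + 1 \leq n\).
\end{itemize}
By monotonicity \(\phi([m]) \subseteq [n]\), so \(\phi \in \Delta(m,n)\). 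Since \(\tau_m^a(y) = y + a\), we get \(f = \phi \circ \tau_m^a\).

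\textbf{Uniqueness.} Suppose \(\phi \circ \tau^a = \psi \circ \tau^b\) with \(\phi, \psi \in \Delta\). Then \(\psi = \phi \circ \tau^{a-b}\). For a morphism \(\phi\) of \(\Delta\), extended spirally, \(\phi(-1) = \phi(m) - n - 1 < 0\) while \(\phi(0) \geq 0\). Hence \(0\) is the minimal \(x\) with \(\phi(x) \geq 0\), and the same holds for \(\psi\). The minimal \(x\) with \((\phi \circ \tau^{c})(x) \geq 0\) is \(-c\), so comparing the two minima forces \(c = a - b = 0\), and then \(\phi = \psi\).

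\textbf{Main obstacle.} The only delicate point is the boundary check \(\phi(m) \leq n\) in the existence step. Choosing \(a\) as the minimal nonnegative point and then applying the spiral relation at \(a-1\) resolves it. Everything else is formal.
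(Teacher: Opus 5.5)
Your proof is correct apart from a sign slip, and it argues more directly than the paper. The slip: \(f \circ \tau_m^{-a}\) is the map \(y \mapsto f(y-a)\), not \(y \mapsto f(y+a)\). Your boundary checks use \(\phi(y) = f(y+a)\), i.e.\ \(\phi = f \circ \tau_m^{a}\). With that \(\phi\) the factorization reads \(f = \phi \circ \tau_m^{-a}\). The map \(\phi \circ \tau_m^{a}\) that you wrote is \(y \mapsto f(y+2a)\). Flipping the exponent fixes existence, and your uniqueness argument is fine as written.

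The paper does not work with \(f\) directly. It transports the problem through the order-adjoint self-duality \(L\colon \Lambda_{\infty}^\op \to \Lambda_{\infty}\), which moves the automorphism to the other side. It writes \(Lf = \tau_m^{a} \circ \psi\) with \(a = Lf(0)\) and \(\psi(0) = 0\); this normalization is trivial, and uniqueness follows by evaluating at \(0\). Then \(\psi = L\phi\) for some \(\phi \in \Delta(m,n)\), and dualizing gives \(f = \phi \circ R(\tau_m)^a = \phi \circ \tau_m^{-a}\).

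Your shift \(\min\{x : f(x) \geq 0\}\) is exactly \(Lf(0)\), so the underlying computation is the same. The paper's route is shorter once the self-duality is available, and it explains the normal form as the dual of the obvious left factorization. However, it leaves implicit that \(L\) identifies \(\Delta(m,n)\) with the maps \(\psi \in \Lambda_{\infty}(n,m)\) satisfying \(\psi(0) = 0\). That identification is precisely your check \(\phi(-1) < 0 \leq \phi(0)\) combined with the spiral relation. Your version is self-contained and needs no adjoints.
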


\begin{proof}
    Let \(f \in \Lambda_{\infty}(m,n)\). \(Lf \in \Lambda_{\infty}(n,m)\) factors uniquely as \(Lf = \tau_m^a \circ \psi\) with \(a = Lf(0)\) and \(\psi(0) = 0\). By \Cref{thm:paracyclic-self-duality}, we have \(\psi = L \phi\) for a unique \(\phi \in \Delta(m,n)\). It follows that \(f\) has the unique factorization \(f = \phi \circ R(\tau_m)^a\). We immediately see from the adjunction that \(R(\tau_n) = \tau_n^{-1}\), so \(\Lambda_{\infty}\) is a crossed simplicial group.
\end{proof}

\begin{definition} \label{def:cyclic-category}
    For \(N \in \bN_{\geq 1}\) we define the \emph{\(N\)-cyclic category}
    \[ \Lambda_N \coloneq \Lambda_{\infty} / \la \tau_n^{N(n+1)} = \id_n : n \geq 0 \ra. \]
    The \emph{cyclic category} is defined as \(\Lambda \coloneq \Lambda_1\). 
\end{definition}

\begin{proposition}
    \(\Lambda_N\) is a crossed simplicial group where \(\Aut_{\Lambda_N}(n) \iso \bZ / N(n+1)\).
\end{proposition}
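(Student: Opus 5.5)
We need to show that \(\Lambda_N\) is a crossed simplicial group with \(\Aut_{\Lambda_N}(n) \iso \bZ/N(n+1)\). The plan is to transport the unique normal form of \(\Lambda_\infty\) from \Cref{cor:paracyclic-normalform} through the quotient. First we identify the relation concretely. The element \(\tau_n^{n+1}\) is the map \(x \mapsto x + n + 1\), and by the spiraling property any \(f \in \Lambda_\infty(m,n)\) satisfies \(f \circ \tau_m^{m+1} = \tau_n^{n+1} \circ f\). Hence \(T = (\tau_n^{n+1})_n\) is a central natural transformation of \(\id_{\Lambda_\infty}\), and so is \(T^N\). We read the quotient \(\Lambda_\infty / \la T^N = \id \ra\) as the category quotient by the congruence generated by \(T^N_n \sim \id_n\). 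Because \(T^N\) is natural, this congruence has a simple description: \(f \sim g\) in \(\Lambda_\infty(m,n)\) if and only if \(g = T_n^{Nk} \circ f\) for some \(k \in \bZ\). This relation is already compatible with composition, using naturality to move powers of \(T\) past morphisms, so it is the generated congruence.

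Next, for \(f \in \Lambda_\infty(m,n)\) write its normal form \(f = \phi \circ \tau_m^a\) with \(\phi \in \Delta(m,n)\). Naturality gives
\[ T_n^{Nk} \circ f = \phi \circ T_m^{Nk} \circ \tau_m^a = \phi \circ \tau_m^{a + Nk(m+1)}. \]
So the equivalence class of \(f\) is exactly \(\{\phi \circ \tau_m^{a'} : a' \equiv a \bmod N(m+1)\}\). By uniqueness of the normal form in \(\Lambda_\infty\), two morphisms \(\phi \circ \tau_m^a\) and \(\phi' \circ \tau_m^{a'}\) are identified precisely when \(\phi = \phi'\) and \(a \equiv a' \bmod N(m+1)\). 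It follows that \(\Delta \to \Lambda_N\) is faithful and bijective on objects, and that every morphism of \(\Lambda_N\) factors uniquely as \(\phi \circ [\tau_m]^{\bar a}\) with \(\bar a \in \bZ/N(m+1)\).

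It remains to identify the automorphism group and confirm the normal form has the required shape. Automorphisms of \(n\) in \(\Lambda_N\) are classes \(\phi \circ [\tau_n]^{\bar a}\) with \(\phi \in \Delta(n,n)\) invertible. Since an idempotent or noninvertible \(\phi\) stays noninvertible in the quotient (its class is distinct from the identity class by uniqueness), this forces \(\phi = \id\). Thus \(\Aut_{\Lambda_N}(n) = \{[\tau_n]^{\bar a}\} \iso \bZ/N(n+1)\), the isomorphism following from \(\Aut_{\Lambda_\infty}(n) = \{\tau_n^a\} \iso \bZ\) modulo the subgroup \(N(n+1)\bZ\). The factorization \(f = \phi \circ g\) with \(g\) an automorphism is then the unique normal form required by the definition.

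The main obstacle is the first step: justifying that the congruence generated by the relations is exactly the relation "differ by a power of \(T^N\)". The key is centrality of \(T\), which makes that relation closed under composition on both sides. Once this is in place, everything reduces to uniqueness of normal forms in \(\Lambda_\infty\).
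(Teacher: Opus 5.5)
Your proposal is correct and follows the paper's own argument. The paper likewise identifies the congruence generated by \(\tau_n^{N(n+1)} = \id_n\) as \(f \sim f \circ \tau_m^{\ell N(m+1)}\), which your naturality of \((\tau_n^{n+1})_n\) is exactly what makes ``readily verified'', and then reads everything off from the unique normal form in \(\Lambda_\infty\). One justification is loose: in the automorphism step, \([\phi] \neq [\id_n]\) does not by itself show that \([\phi]\) is non-invertible. Your congruence description repairs this at once, since \([g] \circ [\phi] = [\id_n]\) forces \(g \circ \phi = \tau_n^{Nk(n+1)}\) in \(\Lambda_\infty\), so \(\phi\) is injective, hence a strictly increasing self-map of \([n]\), hence \(\phi = \id_n\).
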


\begin{proof}
    It is readily verified that \(\la \tau_n^{N(n+1)} = \id_n : n \geq 0 \ra\) identifies \(f, f' \in \Lambda_{\infty}(m,n)\) precisely if 
    \[ f = f' \circ \tau_m^{\ell N(m+1)} \quad \text{ for some } \ell \in \bZ. \]
    Now the result follows from the normal form of \(\Lambda_{\infty}\). 
\end{proof}

\subsection{The dihedral family}

\begin{definition}
    We denote by \(\rho\colon \bZ \to \bZ\) the map \(\rho(x) = -x\).
    The \emph{paradihedral category} \(\Xi_{\infty}\) has objects \(n \in \bN\).
    Morphisms are given by 
    \[ \Xi_{\infty}(m,n) = \Lambda_{\infty}(m,n) \cup (\rho \circ \Lambda_{\infty}(m,n)) \] 
    with composition as maps of sets.
    We denote by \(\rho_n \in \Xi_{\infty}(n,n)\) the morphism corresponding to the map \(\rho\).
\end{definition}

\begin{proposition}
    \(\Xi_{\infty}\) is a crossed simplicial group with \(\Aut_{\Xi_{\infty}}(n) \iso D_{\infty}\).
\end{proposition}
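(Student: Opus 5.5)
We have to show three things about \(\Xi_{\infty}\): composition is well defined, \(\Delta \subseteq \Lambda_{\infty} \subseteq \Xi_{\infty}\) gives the unique factorization defining a crossed simplicial group, and \(\Aut_{\Xi_{\infty}}(n) \iso D_{\infty}\).

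\emph{Closure under composition.} Each \(f \in \Lambda_{\infty}(m,n)\) satisfies the spiraling property \(f(x+m+1) = f(x)+n+1\). A direct computation gives \(\rho \circ f \circ \rho (x) = -f(-x)\), which is again weakly increasing and spiraling of type \((m,n)\). Hence
\[ \rho \circ \Lambda_{\infty}(m,n) \circ \rho \subseteq \Lambda_{\infty}(m,n). \]
Consequently
\[ (\rho \circ g) \circ (\rho \circ f) = (\rho g \rho) \circ f \in \Lambda_{\infty} \quad \text{and} \quad g \circ (\rho \circ f) = \rho \circ (\rho g \rho) \circ f, \]
and the remaining cases are trivial. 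So \(\Xi_{\infty}\) is closed under composition. Associativity and identities hold because composition is composition of maps of sets.

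\emph{Automorphisms.} Elements of \(\Lambda_{\infty}(n,n)\) are weakly increasing, while elements of \(\rho \circ \Lambda_{\infty}(n,n)\) are weakly decreasing. A map \(\bZ \to \bZ\) is weakly increasing and weakly decreasing only if it is constant, and a constant map is never spiraling. Hence the union defining \(\Xi_{\infty}(m,n)\) is disjoint. An element \(\rho \circ f\) is invertible in \(\Xi_{\infty}\) exactly when \(f\) is invertible in \(\Lambda_{\infty}\), since \(\rho\) is invertible. Therefore
\[ \Aut(n) = \{ \tau_n^a, \ \rho_n \tau_n^a : a \in \bZ \}, \]
using the earlier proposition that \(\Aut_{\Lambda_{\infty}}(n) = \{\tau_n^a\}\). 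The relations \(\rho_n^2 = \id\) and \(\rho_n \tau_n \rho_n = \tau_n^{-1}\) are immediate, since \(-(-x+1) = x-1\). Together with the disjointness, this identifies \(\Aut(n)\) with \(\bZ \rtimes \bZ/2 = D_{\infty}\).

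\emph{Normal form.} Take a morphism \(h \in \Xi_{\infty}(m,n)\).
\begin{itemize}
  \item If \(h \in \Lambda_{\infty}\), then \Cref{cor:paracyclic-normalform} gives \(h = \phi \circ \tau_m^{a}\) with \(\phi \in \Delta(m,n)\).
  \item If \(h = \rho \circ f\), write \(h = (\rho f \rho) \circ \rho_m\). Then \(\rho f \rho \in \Lambda_{\infty}\), so it factors as \(\phi \circ \tau_m^{a}\), and hence \(h = \phi \circ (\tau_m^{a} \rho_m)\).
\end{itemize}

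For uniqueness, suppose \(\phi g = \phi' g'\) with \(\phi, \phi' \in \Delta(m,n)\) and \(g, g' \in \Aut(m)\). Because \(\phi\) and \(\phi'\) are increasing, \(g\) and \(g'\) lie in the same coset of \(\Lambda_{\infty}\), by the increasing/decreasing dichotomy above. In the \(\rho\) coset we cancel \(\rho_m\) on the right. In both cases we reduce to uniqueness in \(\Lambda_{\infty}\), which is \Cref{cor:paracyclic-normalform}.

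The only real obstacle is ensuring that the union is disjoint and that the increasing/decreasing dichotomy is airtight; the rest is bookkeeping.
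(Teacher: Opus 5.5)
Your proof is correct and takes essentially the same route as the paper. Your observation that \(\rho \circ f \circ \rho(x) = -f(-x)\) lies in \(\Lambda_{\infty}\) is exactly the paper's twisting automorphism \(T_b\) with \(b_n = 0\), and both arguments reduce the normal form to \Cref{cor:paracyclic-normalform} via \(\rho_n^{\eps} \circ f = T_b^{\eps}(f) \circ \rho_m^{\eps}\). Beyond that, you make explicit the closure under composition and the disjointness of \(\Lambda_{\infty}(m,n)\) and \(\rho \circ \Lambda_{\infty}(m,n)\), which the paper simply asserts when it says the factorization \(\rho_n^{\eps} \circ f\) is unique ``by definition''.
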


\begin{proof}
    By definition, every morphism in \(\Xi_{\infty}(m,n)\) factors uniquely as \(\rho^{\eps}_n \circ f\) where \(\eps \in \{0, 1\}\) and \(f \in \Lambda_{\infty}(m,n)\).
    The automorphism \(T_b\colon \Lambda_{\infty} \to \Lambda_{\infty}\) in \Cref{prop:paracyclic-selfduality} maps \(T_b(f) = \rho_n \circ f \circ \rho_m\).
    Using the unique normal form \(T_b^{\eps}(f) = \phi \circ \tau_m^a\), we conclude 
    \[ \rho_n^\eps \circ f = T_b^{\eps}(f) \circ \rho_m^\eps = \phi \circ \tau_m^a \circ \rho^{\eps}_m \]
    for unique \(\phi \in \Delta(m,n)\), \(a \in \bZ\) and \(\eps \in \{0, 1\}\). In particular, this implies the claim \(\Aut_{\Xi_{\infty}}(n) \iso \bZ/2 \ltimes \bZ \iso D_{\infty}\).
\end{proof}

\begin{definition}
    For \(N \in \bN_{\geq 1}\) we define the \emph{\(N\)-dihedral category} as the quotient
    \[ \Xi_N \coloneq \Xi_{\infty} / \la \tau_n^{N(n+1)} = \id_n : n \geq 0 \ra. \]
    The \emph{dihedral category} is defined as \(\Xi \coloneq \Xi_1\).
\end{definition}

\begin{proposition}
    \(\Xi_N\) is a crossed simplicial group where \(\Aut_{\Xi_{N}}(n) \iso D_{N(n+1)}\).
\end{proposition}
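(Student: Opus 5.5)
The plan mirrors the proof for \(\Lambda_N\): first describe the ideal generated by the relations \(\tau_n^{N(n+1)} = \id_n\) in \(\Xi_\infty\), then transfer the normal form of \(\Xi_\infty\) to the quotient. For the ideal, I will show that two morphisms \(f, f' \in \Xi_\infty(m,n)\) are identified in \(\Xi_N\) precisely if \(f = f' \circ \tau_m^{\ell N(m+1)}\) for some \(\ell \in \bZ\). To see that this relation is a congruence, the key point is that \(T \colon \id \Rightarrow \id\) with components \(\tau_n^{n+1}\) is natural on \(\Lambda_\infty\). Indeed, \(\tau_n^{n+1}(x) = x + n + 1\), and the spiraling property gives \(f \circ \tau_m^{m+1} = \tau_n^{n+1} \circ f\).

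On \(\Xi_\infty\), \(T\) fails to be central, but only up to inversion: \(\rho_n \circ \tau_n^{n+1} = \tau_n^{-(n+1)} \circ \rho_n\), since \(-(x+n+1) = -x - (n+1)\). Consequently, for every morphism \(g\) of \(\Xi_\infty\) we have \(g \circ \tau_m^{\ell N(m+1)} = \tau_n^{\pm \ell N(n+1)} \circ g\), with the sign determined by whether \(g\) involves \(\rho\). The set \(\{\tau_m^{\ell N(m+1)} : \ell \in \bZ\}\) is closed under inversion, so post-composing and precomposing with such powers generate the same relation. Hence the relation \(f \sim f \circ \tau_m^{\ell N(m+1)}\) is stable under composition on both sides. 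It is therefore exactly the congruence generated by the relations \(\tau_n^{N(n+1)} = \id_n\).

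Next, I take the normal form from the previous proposition: \(f = \phi \circ \tau_m^a \circ \rho_m^\eps\) with \(\phi \in \Delta(m,n)\), \(a \in \bZ\), \(\eps \in \{0,1\}\) unique. I then compute
\[ f \circ \tau_m^{\ell N(m+1)} = \phi \circ \tau_m^{a \pm \ell N(m+1)} \circ \rho_m^\eps . \]
So in the quotient the normal form persists, with \(a\) now taken modulo \(N(m+1)\), and uniqueness descends. This shows that \(\Xi_N\) is a crossed simplicial group, because the factorization property is inherited and \(\Delta \to \Xi_N\) stays faithful and bijective on objects.

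Finally, the automorphism group is the quotient of \(D_\infty = \bZ/2 \ltimes \bZ\) by the subgroup \(N(n+1)\bZ\) of rotations. This subgroup is normal, since \(\rho\) acts on it by inversion. The quotient is \(\bZ/2 \ltimes \bZ/N(n+1) \iso D_{N(n+1)}\).

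The main obstacle is the congruence step, namely verifying that the \(\rho\)-twisted commutation relation holds and that the described relation really is the whole generated ideal. Everything else is a formal transfer of the \(\Xi_\infty\) normal form.
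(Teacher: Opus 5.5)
Your proposal is correct and follows the paper's proof. You identify the congruence generated by \(\tau_n^{N(n+1)} = \id_n\) as \(f \sim f \circ \tau_m^{\ell N(m+1)}\), then transfer the normal form of \(\Xi_\infty\) using \(\rho_n \circ \tau_n = \tau_n^{-1} \circ \rho_n\), and you supply the details the paper leaves as ``readily verified'': the \(\rho\)-twisted naturality of \(\tau^{n+1}\), closure of the relation under composition, and the sign computation in the normal form.
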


\begin{proof}
    It is readily verified that \(\la \tau_n^{N(n+1)} = \id_n : n \geq 0 \ra\) identifies \(f,f' \in \Xi_{\infty}(m,n)\) precisely if
    \[ f = f' \circ \tau_m^{\ell N(m + 1)} \quad \text{ for some } \ell \in \bZ. \]
    Now the result follows from the normal form of \(\Xi_{\infty}\) and \(\rho_n \circ \tau_n = \tau_n^{-1} \circ \rho_n\).
\end{proof}

\begin{remark}
    The way we have constructed \(\Xi_{\infty}\) already suggests how we can extend the self-duality from \(\Lambda_{\infty}\).
    If \(\widehat{L}\colon \Xi_{\infty}^\op \to \Xi_{\infty}\) is an extension of \(L\), then for all \(f \in \Lambda_{\infty}(m,n)\) it must satisfy (in view of \Cref{cor:existence-adjoints})
    \begin{align*}
        Lf \circ \widehat L(\rho_n) &= \widehat L(\rho_n \circ f) = \widehat{L}(T_b(f) \circ \rho_m) = \widehat{L}(\rho_m) \circ L(T_b(f)) = \widehat{L}(\rho_m) \circ T_b(Rf) \\
        &= \widehat{L}(\rho_m) \circ \rho_m \circ \tau_m^{-1} \circ Lf \circ \tau_n \circ \rho_n.
    \end{align*}
    A natural choice for the extension is therefore \(\widehat{L}(\rho_n) = \tau_n \circ \rho_n\).
\end{remark}

\begin{proposition} \label{prop:order-adjoint-paradihedral}
    The self-duality \(L\colon \Lambda_{\infty}^\op \to \Lambda_{\infty}\) extends to a self-duality \(\widehat{L}\colon \Xi_{\infty}^\op \to \Xi_{\infty}\) which acts via
    \[
        \widehat L(\delta_i^n) = \begin{cases}
            \sigma_i^n & \text{if } i < n, \\
            \sigma_0^n \circ \tau_n & \text{if } i = n, 
        \end{cases}
        \quad
        \widehat{L}(\sigma_j^n) = \delta^n_{j+1},
        \quad
        \widehat L(\tau_n) = \tau_n^{-1}, \\
        \quad
        \widehat L(\rho_n) = \tau_n \circ \rho_n.
    \]
    Its inverse is the extension \(\widehat{R}\colon \Xi_{\infty}^\op \to \Xi_{\infty}\) of \(R\colon \Lambda_{\infty}^\op \to \Lambda_{\infty}\) by \(\widehat{R}(\rho_n) = \rho_n \circ \tau_n \).
\end{proposition}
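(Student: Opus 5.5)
Every morphism of \(\Xi_{\infty}\) can be written uniquely as \(\rho_n^{\eps} \circ f\) with \(f \in \Lambda_{\infty}(m,n)\) and \(\eps \in \{0,1\}\). I would define \(\widehat L\) on such morphisms by
\[ \widehat L(\rho_n^\eps \circ f) \coloneq L(f) \circ (\tau_n \circ \rho_n)^\eps , \]
and \(\widehat R\) analogously by \(\widehat R(\rho_n^\eps \circ f) \coloneq R(f) \circ (\rho_n \circ \tau_n)^\eps\). The plan has three checks: \(\widehat L\) is a functor \(\Xi_{\infty}^\op \to \Xi_{\infty}\); \(\widehat R\) is its inverse; the listed values hold. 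The values on \(\delta_i^n\), \(\sigma_j^n\) and \(\tau_n\) are those of \(L\) from the preceding remark, and \(\widehat L(\rho_n) = \tau_n \circ \rho_n\) holds by definition. So the only real content is functoriality.

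For functoriality, it suffices to verify three things.
\begin{itemize}
    \item \(\widehat L\) restricted to \(\Lambda_{\infty}\) equals \(L\). This is immediate.
    \item \(\widehat L(\rho_n)\) is an involution. We have \(\tau_n\rho_n\tau_n\rho_n = \tau_n \tau_n^{-1} = \id\), using \(\rho_n \tau_n = \tau_n^{-1}\rho_n\).
    \item The twisted commutation relation from the preceding remark holds: \(\widehat L(\rho_n \circ f) = \widehat L(T_b(f) \circ \rho_m)\) for all \(f \in \Lambda_{\infty}(m,n)\).
\end{itemize}
These suffice because \(\Xi_{\infty}\) is generated by \(\Lambda_{\infty}\) and the \(\rho_n\), subject to exactly the relations \(\rho_n^2=\id\) and \(\rho_n f = T_b(f)\rho_m\); this is the content of the normal form proof.

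The third check unwinds as follows. The left side is \(Lf \circ \tau_n\rho_n\) by definition. The right side is \(\tau_m\rho_m \circ L(T_b f)\). So we need
\[ L(T_b f) = \rho_m \tau_m^{-1} \circ Lf \circ \tau_n \rho_n = \tau_m \rho_m \circ Lf \circ \tau_n \rho_n, \]
using \(\rho_m\tau_m^{-1} = \tau_m\rho_m\). I would prove this with the order-adjoint formulas on \(\bZ\). Conjugation by the order-reversing map \(\rho\) exchanges left and right adjoints, so \(L(\rho f \rho) = \rho \circ Rf \circ \rho\). Then \Cref{cor:existence-adjoints} gives \(Rf(y) = Lf(y+1) - 1\), that is, \(Rf = \tau^{-1} \circ Lf \circ \tau\). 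Combining the two yields
\[ L(T_b f) = \rho\tau^{-1} \circ Lf \circ \tau\rho = \tau\rho \circ Lf \circ \tau\rho, \]
which is what we need. This is the same computation as in the preceding remark, read backwards.

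For inverseness, I would compute \(\widehat R \widehat L(\rho_n) = \widehat R(\tau_n\rho_n) = \widehat R(\rho_n) \circ R(\tau_n) = \rho_n\tau_n\tau_n^{-1} = \rho_n\), recalling that \(\widehat R\) is contravariant. Symmetrically \(\widehat L\widehat R(\rho_n) = \rho_n\). Together with \(RL = \id = LR\) on \(\Lambda_{\infty}\), this shows the two functors are mutually inverse on generators. The functoriality of \(\widehat R\) follows by the same adjoint computation with the roles of \(L\) and \(R\) exchanged.

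The main obstacle is the bookkeeping of \(\rho\) against left and right adjoints and the sign or shift conventions. In particular, one must confirm that conjugation by \(\rho\) turns a left adjoint into a right adjoint with exactly the \(\tau\)-shift \(Rf = \tau^{-1} Lf\, \tau\), and not \(\tau\, Lf\, \tau^{-1}\). I would check this directly from the defining inequalities \(Lf(x)\le y \iff x\le f(y)\).
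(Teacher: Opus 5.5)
Your proposal is correct and is essentially the paper's proof. In both, the key step is the identity \(L(T_b f) = \tau\rho\circ Lf\circ\tau\rho\), which follows from two facts: conjugation by \(\rho\) exchanges left and right adjoints, and \(Rf = \tau^{-1}\circ Lf\circ\tau\).

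The difference is only in packaging. You check the defining relations \(\rho_n^2=\id\) and \(\rho_n f = T_b(f)\rho_m\) of a presentation of \(\Xi_\infty\), which handles the mixed composites uniformly and makes the inverse check explicit. The paper instead verifies the composite of two normal forms \(f\circ\rho\) and \(g\circ\rho\) directly.
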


\begin{proof}
    Since every morphism in \(\Xi_{\infty}(m,n)\) can be uniquely written in the form \(f \circ \rho_m^{\eps}\) with \(f \in \Lambda_{\infty}(m,n)\) and \(\eps \in \{0, 1\}\), we see that \(\widehat{L}\) is a well-defined map on morphisms.
    The only non-trivial case to verify for functoriality is (with suppressed indices)
    \begin{align*}
        \widehat{L}(f \circ \rho) \circ \widehat{L}(g \circ \rho) &= \tau \circ \rho \circ Lf \circ \tau \circ \rho \circ Lg \\
        &= \tau \circ T_b(Lf) \circ \tau^{-1} \circ Lg  \\
        &= \tau \circ R(T_b(f)) \circ \tau^{-1} \circ Lg \\
        &= L(T_b(f)) \circ Lg \\
        &= \widehat{L}(g \circ T_b(f)) \\
        &= \widehat{L}(g \circ \rho \circ f \circ \rho). 
    \end{align*}
    The argument for \(\widehat{R}\) is analogous.
    The actions of \(\widehat{L}\) on cofaces, codegeneracies and the shift are already given by \(L\), in which case we simply verify their adjunction directly.
\end{proof}

\begin{corollary} \label{cor:paradihedral-self-duality}
    Extending the morphisms in \Cref{def:twisting-automorphism} to \(\Xi_{\infty}\) and using \Cref{prop:paracyclic-selfduality} we obtain the following results.
    \begin{enumerate}
        \item \(T_a \circ \widehat{L}\) preserves the family \((p_n)\) and restricts to \(T_a \circ L\) on \(\Lambda_{\infty}\). 
        \item \(T_b \circ \widehat{L} \eqcolon (-)^\circ\) preserves the family \((q_n)\) and restricts to \(T_b \circ L\) on \(\Lambda_{\infty}\).
    \end{enumerate}
\end{corollary}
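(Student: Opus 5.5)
We first extend the twisting automorphisms to \(\Xi_{\infty}\) by the same formula \(T_c f(x) \coloneq c_n - f(c_m - x)\), now for \(f \in \Xi_{\infty}(m,n)\). Since \(T_c f = \rho \circ t_{c_n} \circ f \circ t_{c_m} \circ \rho\), where \(t_c\) is translation by \(c\), we can write \(T_c(f) = (\tau_n^{c_n}\circ\rho_n) \circ f \circ (\rho_m \circ \tau_m^{c_m})\). This is conjugation by elements of \(\Xi_{\infty}\): the factors \(\tau_n^{c_n}\circ\rho_n\) and \(\rho_m\circ\tau_m^{c_m}\) are involutions and mutually inverse. Hence \(T_c\) is automatically a well-defined automorphism of \(\Xi_{\infty}\) that is the identity on objects. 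It restricts to the old \(T_c\) on \(\Lambda_{\infty}\), because conjugation preserves the subset \(\Lambda_{\infty}(m,n)\), the weakly increasing spiraling maps. Consequently \(T_c \circ \widehat L\colon \Xi_{\infty}^\op \to \Xi_{\infty}\) is a self-duality, and it is a composite of an isomorphism with an anti-isomorphism.

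The restriction statements in (1) and (2) then follow directly. By \Cref{prop:order-adjoint-paradihedral}, \(\widehat L\) restricts to \(L\) on \(\Lambda_{\infty}\), and we have just seen that the extended \(T_c\) restricts to the original \(T_c\). So \(T_a\circ\widehat L|_{\Lambda_{\infty}} = T_a\circ L\) and \(T_b\circ\widehat L|_{\Lambda_{\infty}} = T_b \circ L\).

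Preservation of the projector families is then inherited from \Cref{prop:paracyclic-selfduality}. The projectors \(p_n\) and \(q_n\) lie in \(k[\Delta] \subseteq k[\Lambda_{\infty}]\), and the linearized functors agree with \(T_a\circ L\) and \(T_b\circ L\) on \(k[\Lambda_{\infty}]\). Therefore \(T_a\circ\widehat L(p_n) = p_n\) and \(T_b\circ\widehat L(q_n) = q_n\).

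The only point needing care is that the extended \(T_c\) really maps \(\Xi_{\infty}(m,n)\) into itself and is functorial. The conjugation description above settles this: the maps are composed as maps of sets, and the conjugating elements lie in \(\Xi_{\infty}\). We should also check that this extension is compatible with \(T_b(f) = \rho_n\circ f\circ\rho_m\) as used earlier. For \(c = b\) the conjugating elements are exactly \(\rho_n\) and \(\rho_m\), so the two descriptions agree.
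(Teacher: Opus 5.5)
This is correct and is essentially the paper's own (implicit) argument: extend \(T_c\) to \(\Xi_{\infty}\) by the same formula, note that \(\widehat L\) and the extended \(T_c\) restrict to \(L\) and \(T_c\) on \(\Lambda_{\infty}\), and inherit preservation of \((p_n)\) and \((q_n)\) from \Cref{prop:paracyclic-selfduality}, since the projectors lie in \(k[\Delta] \subseteq k[\Lambda_{\infty}]\).

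There is one slip to fix. The correct identity is \(T_c(f) = (\tau_n^{c_n}\circ\rho_n)\circ f\circ(\tau_m^{c_m}\circ\rho_m)\). Your right factor \(\rho_m\circ\tau_m^{c_m}\) is the map \(x\mapsto -x-c_m\), not \(x \mapsto c_m - x\). Also, \(\tau^{c}\circ\rho\) and \(\rho\circ\tau^{c}\) are not mutually inverse unless \(c=0\). Similarly, the outer factor in your first formula should be \(t_{c_n}\circ\rho\) rather than \(\rho\circ t_{c_n}\).

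The repair costs nothing: each \(\tau_k^{c_k}\circ\rho_k\) is an involution in \(\Xi_{\infty}\). So the corrected formula still exhibits \(T_c\) as conjugation by a family of automorphisms, and your functoriality argument goes through unchanged.
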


\subsection{Lifting self-duality to extensions} \label{sec:lifting-through-extensions}

Throughout this section we fix an extension \(p\colon \Delta G \to \Delta H\) of crossed simplicial groups with kernel \(\Delta K\) such that
\begin{itemize}
    \item \(\Delta H \in \{ \Lambda, \Xi \}\), and
    \item \(K\) is a constant simplicial group.
\end{itemize}
Since \(\Aut_{\Lambda}(0)\) is trivial, we regard it as a subgroup of \(\Aut_{\Xi}(0) \iso C_2 = \{1, -1\}\) and denote by \(\eps\colon G_0 \to C_2\) the composite group homomorphism \(G_0 \xrightarrow{p} H_0 \hookrightarrow C_2\).
We form the following pullback \(\Delta P\) in \(\Cat\).
\[\begin{tikzcd}[ampersand replacement=\&,cramped]
	{\Delta P} \&\& {\mathcal{B}G_0} \\
	\\
	{\Xi_{\infty}} \&\& {\mathcal{B}C_2}
	\arrow[from=1-1, to=1-3]
	\arrow[from=1-1, to=3-1]
	\arrow["\eps", from=1-3, to=3-3]
	\arrow["\sgn"', from=3-1, to=3-3]
\end{tikzcd}\]
Here, \(\mathcal{B}G\) is the delooping of a group into a one-object category and \(\sgn\) records whether a morphism in \(\Xi_{\infty}\) is increasing or decreasing.
We use the following explicit description of \(\Delta P\).
\begin{itemize}
    \item Objects of \(\Delta P\) are natural numbers \(n \in \bN\).
    \item Morphisms in \(\Delta P(m,n)\) are pairs \((f, a)\) where \(f \in \Xi_{\infty}(m,n)\), \(a \in G_0\). These are subject to the condition \(\sgn(f) = \eps(a)\).
    \item Composition is given by \((g, b) \circ (f, a) = (g \circ f, b \circ a)\).
\end{itemize}
Note that \(\Delta P\) is itself a crossed simplicial group and we can lift the order adjoint self-duality of \(\Xi_{\infty}\) to a self-duality on \(\Delta P\) via the functor
\[ (\Delta P)^\op \to \Delta P, \quad n \mapsto n, \quad (f, a) \mapsto ( \widehat{L}f, a^{-1} ). \]
Furthermore, \(\Delta P\) comes with a canonical morphism of crossed simplicial groups
\begin{align*}
    \pi\colon \Delta P \to \Delta H, \quad n \mapsto n, \quad (f, a) \mapsto [f].
\end{align*}
We will explicitly construct a lift:
\[\begin{tikzcd}[ampersand replacement=\&,cramped]
	\&\& {\Delta G} \\
	\\
	{\Delta P} \&\& {\Delta H}
	\arrow["p", from=1-3, to=3-3]
	\arrow[dashed, from=3-1, to=1-3]
	\arrow["\pi"', from=3-1, to=3-3]
\end{tikzcd}\]

\begin{definition}
    We denote by \(v_i^n \in \Delta(0, n)\) the \(i\)-th vertex operator with image \(i \in [n]\).
\end{definition}

\begin{convention}
    Since \(K\) is constant, all simplicial operators on it act as bijections.
    We identify the elements of \(K_0\) with elements in \(K_n\) via the action of the terminal morphism:
    \[ K_0 \xrightarrow{\sim} K_n, \quad k \mapsto k_n \coloneq k \triangleleft \omega_n. \]
    Note that \(k \mapsto k_n\) is an isomorphism of groups.
\end{convention}

\begin{lemma}
    Let \(f \in \Delta H(m,n)\).
    For every vertex operator \(v_i^m\) we have a bijection of fibers
    \[ p^{-1}(f) \to p^{-1}(f \circ v_i^m), \quad \overline{f} \mapsto \overline{f} \circ v^m_i. \]
\end{lemma}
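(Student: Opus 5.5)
The plan is to use the extension structure: $p$ is full, and the fibres of $p$ over a morphism are torsors for the kernel. First I will set up the fibre description. Write a morphism of $\Delta H(m,n)$ in normal form $f=\phi\circ h$ with $\phi\in\Delta(m,n)$ and $h\in H_m$. Since $p$ restricts to the identity on $\Delta$ and is surjective on automorphisms $G_m\to H_m$, every lift has the unique form $\phi\circ g$ with $g\in G_m$ and $p(g)=h$. Hence $p^{-1}(f)\cong p^{-1}(h)$, which is a coset $gK_m$ of $K_m$ in $G_m$. In the same way, $p^{-1}(f\circ v_i^m)$ is identified with a coset of $K_0$ in $G_0$. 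The map $\overline f\mapsto\overline f\circ v_i^m$ is well defined, because $p(\overline f\circ v_i^m)=f\circ v_i^m$. So it suffices to show that this map is a bijection between the two cosets, and for this I will check compatibility with the $K$-actions.

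Concretely, I take $\overline f=\phi\circ g$ and compute $g\circ v_i^m$ in normal form as $v_{g(i)}\circ(g\triangleleft v_i)$. Here $v_{g(i)}$ denotes the $\Delta$-component obtained from the crossed structure, and $g\triangleleft v_i\in G_0$. Thus $\overline f\circ v_i^m=\phi\circ v_{j}\circ (g\triangleleft v_i^m)$, where the vertex $j$ depends only on $p(g)=h$. Indeed, $p$ is a morphism of crossed simplicial groups, so it preserves normal forms, and $j$ is read off from $h\circ v_i$ in $\Delta H$. Consequently the map on fibres becomes $gK_m\to G_0$, $g'\mapsto g'\triangleleft v_i^m$, taken on the coset of lifts of $h$.

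Next I will use the crossed simplicial identity $(gk)\triangleleft v=(g\triangleleft (k\circ v))\cdot(k\triangleleft v)$, or whichever version of the cocycle formula matches the paper's conventions. For $k\in K_m$, the kernel $\Delta K$ maps to $\Delta$ in $\Delta H$, so $k$ acts trivially on vertices after applying $p$. Because $K$ is constant, $k=k'_m=k'\triangleleft\omega_m$ for a unique $k'\in K_0$, and then $k\triangleleft v_i^m=k'\triangleleft(\omega_m\circ v_i^m)=k'$. The main obstacle is to justify that $k\circ v_i^m=v_i^m\circ k'$ holds in $\Delta G$, i.e.\ that the $\Delta$-part of $k\circ v_i$ is exactly $v_i$. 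This follows because $\Delta K\to\Delta$ is the kernel pullback: $k$ lies over an identity in $\Delta H$, so $k\circ v_i$ lies over $v_i$ in $\Delta H$. Since $\Delta$ embeds in $\Delta H$ compatibly with normal forms, the $\Delta$-component must be $v_i$. Granting this, $(gk)\circ v_i=g\circ v_i\circ k'=v_j\circ(g\triangleleft v_i)\,k'$. So the map sends $gk_m'\mapsto(g\triangleleft v_i)k'$, which is equivariant along the group isomorphism $K_0\to K_m$, $k'\mapsto k'_m$, from the Convention.

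Finally, an equivariant map between torsors over isomorphic groups, with the action transported along a group isomorphism, is a bijection. Both fibres are nonempty because $p$ is full. Hence the map $p^{-1}(f)\to p^{-1}(f\circ v_i^m)$ is bijective.
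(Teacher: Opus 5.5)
Your proposal is correct and follows essentially the same route as the paper. You describe each fibre via normal forms as a $K_0$-torsor ($\{\overline{f}\circ k_m\}$, resp.\ $\{\overline{f}\circ v_i^m\circ k\}$), and you prove the key identity $k_m\circ v_i^m = v_i^m\circ k$ by applying $p$, using uniqueness of normal forms in $\Delta H$, and using that $K$ is constant; your coset and equivariance bookkeeping is a more explicit version of the paper's indexing argument.
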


\begin{proof}
    Write \(f = \phi \circ g\) in normal form and choose a lift \(\overline{g} \in G_m\).
    Then \(\overline{f} \coloneq \phi \circ \overline{g}\) is a lift of \(f\).
    By uniqueness of the normal form, the fiber over \(f\) must be indexed as
    \[ p^{-1}(f) = \{ \phi \circ \overline{g} \circ k_m : k \in K_0 \} = \{ \overline{f} \circ k_m : k \in K_0 \} \]
    Now write \(k_m \circ v^m_i = v^m_j \circ a\) with \(a \in G_0\). 
    Applying \(p\) yields \(v^m_i  = v^m_j \circ p(a)\).
    By uniqueness of the normal form we conclude \(i = j\) and \(a \in K_0\).
    In particular, \(a = k_m \triangleleft v_i^m\). Since \(K\) is constant, we must have \(a = k\).
    It follows that
    \[ k_m \circ v_i^m = v_i^m \circ k \]
    for all \(v_i^m\) and all \(k \in K_0\).
    Finally, note that \(\overline{f} \circ v_i^m\) is a lift of \(f \circ v_i^m\). Hence its fiber is indexed as
    \[ \{ \overline{f} \circ v_i^m \circ k : k \in K_0 \} = \{ \overline{f} \circ k_m \circ v_i^m : k \in K_0 \}. \qedhere \]
\end{proof}

Using the lemma, we see that lifts of morphisms are uniquely determined if we specify their value on a vertex.

\begin{construction}
    We construct a functor \(\Phi\colon \Delta P \to \Delta G\) such that \(p \circ \Phi = \pi\).
    For \(n \geq 1\) there exists a unique lift \(t_n \in G_n\) of the cyclic generator \(\tau_n \in H_n\) such that \(t_n \circ v_0^n = v_1^n\).
    Note that \(t_n \circ \delta_n = \delta_0\) because the same holds for the image under \(p\) and they match at vertex zero.
    In terms of vertex operators this implies
    \[ t_n \circ v^n_i = v^n_{i+1} \quad (0 \leq i < n). \]
    We define \(z \in K_0\) by the equation \(t_1 \circ v_1^1 = v_0^1 \circ z\).
    The last rotation is then given by 
    \[ t_n \circ v_n^n = v^n_0 \circ z. \]
    Indeed, let \(\phi, \psi \in \Delta(1, n)\) select vertices \((n-1, n)\) and \((0, n)\).
    Then \(\tau_n \circ \phi = \psi \circ \tau_1\).
    Their lifts \(t_n \circ \phi\) and \(\psi \circ t_1\) agree at vertex zero, so they are equal.
    Evaluating at the other vertex shows the claim.
    We set \(t_0 \coloneq z\).

    For \(a \in G_0\) set \(a_n \coloneq a \triangleleft \omega_n \in G_n\).
    Let \(r_{a,n}\) be the identity if \(\eps(a) = 1\) and the order reversal \(j \mapsto n - j\) otherwise.
    The normal forms and projection to \(\Delta H\) give
    \[ a_n \circ v_i^n = v^n_{r_{a,n}(i)} \circ a \quad \quad (0 \leq i \leq n). \]
    Indeed, composing with \(\omega_n\) gives the automorphism part as \(a\) and applying \(p\) gives the vertex index.
    Finally, the relation \(a \circ z = z^{\eps(a)} \circ a\) is readily verified at vertex zero.
    We can now extend the vertex operators \(v^n_j\) to all indices \(j \in \bZ\). Write \(j = q(n+1) + i\) for \(0 \leq i \leq n\) and set
    \[ v^n_j \coloneq v^n_i \circ z^q \in \Delta G(0, n). \]
    The previous equations involving vertex operators hold in the same way for \(j \in \bZ\).
    Moreover, identifying \(\phi \in \Delta(m,n)\) with its periodic extension to \(\bZ\) gives
    \[ \phi \circ v^m_j = v^n_{\phi(j)} \quad \quad (j \in \bZ). \]
    
    For \((f, a) \in \Delta P(m,n)\) let \(\Phi(f, a) \in \Delta G(m,n)\) be the unique morphism satisfying
    \[ p(\Phi(f,a)) = \pi(f,a), \quad \Phi(f,a) \circ v^m_0 = v^n_{f(0)} \circ a. \]
    In fact, the assignment satisfies the stronger property
    \[ \Phi(f, a) \circ v^m_j = v^n_{f(j)} \circ a \quad \quad (j \in \bZ). \]
    Indeed, let \(f = \phi \circ \tau_m^{\ell} \circ \rho_{a,m} \in \Xi_{\infty}(m,n)\) be the normal form. Then \(\pi(f, a)\) has a lift \(\phi \circ t_m^{\ell} \circ a_m\) in \(\Delta G\) and using the previous identities, we conclude
    \begin{align*}
        \phi \circ t_m^{\ell} \circ a_m \circ v_j^m &= \phi \circ t_m^{\ell} \circ v^m_{\rho_{a,m}(j)} \circ a \\
        &= v^n_{\phi(\rho_{a,m}(j) + \ell)} \circ a \\
        &= v^n_{f(j)} \circ a. 
    \end{align*}
    Functoriality of \(\Phi\) can now be verified directly at vertex \(0\).
\end{construction}

\begin{theorem} \label{thm:extension-quotient}
    There exists an element \(z \in K_0\) satisfying \(aza^{-1} = z^{\eps(a)}\) for all \(a \in G_0\) and an isomorphism of crossed simplicial groups \(\Delta P / \la (\tau_n^{n+1}, 1) = (\id_n, z) : n \geq 0 \ra \iso \Delta G\).
\end{theorem}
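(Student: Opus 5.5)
The element \(z\) will be the one from the preceding construction, defined by \(t_1 \circ v_1^1 = v_0^1 \circ z\). We already know \(z \in K_0\), and the relation \(a \circ z = z^{\eps(a)} \circ a\) was checked at vertex zero. The plan is to show that the functor \(\Phi\colon \Delta P \to \Delta G\) built there is full, bijective on objects, and a morphism of crossed simplicial groups. Its fibres should then be exactly the classes generated by \((\tau_n^{n+1}, 1) \sim (\id_n, z)\). The induced functor from the quotient will then be an isomorphism.

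First, \(\Phi\) is a morphism of crossed simplicial groups. It is the identity on objects, and for \(\phi \in \Delta\) both \(\Phi(\phi,1)\) and \(\phi\) lift \(\pi(\phi,1)\) and agree at vertex zero. The lemma on fibres then gives \(\Phi(\phi,1) = \phi\).

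Second, \(\Phi\) is surjective on hom-sets. Take \(g \in \Delta G(m,n)\). Write \(p(g) = \pi(f,a)\) using fullness of \(\pi\) onto \(\Delta H\): choose the \(\Xi_\infty\)-lift \(f\) of \(p(g)\) with sign \(\eps(a)\), where \(a\) is the \(G_0\)-component in \(g \circ v_0^m = v_{i}^n \circ a\). Then \(\Phi(f,a)\) and \(g\) lie in the same \(p\)-fibre. Their values at vertex zero differ by right multiplication by some \(k \in K_0\). Since \((f, ak)\) is still an element of \(\Delta P\) (because \(\eps(k)=1\)), we may replace \(a\) by \(ak\) and get \(\Phi(f,ak) = g\) by uniqueness of lifts.

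Third, we compute the fibres of \(\Phi\). We have \(\Phi(\tau_n^{n+1},1) \circ v_0^n = v^n_{n+1} = v_0^n \circ z = \Phi(\id_n,z) \circ v_0^n\). Both sides lie over \(\tau_n^{n+1} = \id\) in \(\Delta H\), so they agree. Hence \(\Phi\) factors through the quotient, since the generated congruence is compatible with composition and \(\Phi\) is a functor.

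Conversely, suppose \(\Phi(f,a) = \Phi(f',a')\). Comparing images in \(\Delta H\) shows \(f' = f \circ \tau_m^{\ell(m+1)}\) up to the \(\Xi\)-relation. Using \(\Phi(f,a)\circ v_0^m = v^n_{f(0)}\circ a\) and the convention \(v^n_{j+q(n+1)} = v^n_j \circ z^q\), the difference is absorbed by \(a' = z^{\pm\ell} a\). This is exactly the relation generated by \((\tau^{n+1},1)\sim(\id,z)\), together with \(az = z^{\eps(a)}a\) to move \(z\) past \(a\).

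The hardest step is this last injectivity bookkeeping. It requires checking that the vertex values \(v^n_j \circ a\) for \(j \in \bZ\), \(a \in G_0\) are pairwise distinct modulo exactly \(j \mapsto j+(n+1)\), \(a \mapsto z^{-1}a\) (up to sign conventions). For that I would use uniqueness of normal forms in \(\Delta G(0,n)\), namely \(v_i^n \circ a\) with \(0\le i\le n\). If \(z\) had finite order this is still fine, since it is encoded by the quotient. Finally, the quotient is a crossed simplicial group and the isomorphism respects the \(\Delta\)-structure by the first step.
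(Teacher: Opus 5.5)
Your proposal is correct and follows essentially the same route as the paper: \(\Phi\) is full because a morphism of \(\Delta G\) is determined by its image in \(\Delta H\) together with its value \(v^n_{f(0)} \circ a\) at vertex zero, and the kernel congruence is identified using uniqueness of the normal forms \(v^n_i \circ a\) in \(\Delta G(0,n)\) together with \(v^n_{j+q(n+1)} = v^n_j \circ z^q\). The differences are cosmetic. The paper avoids your correction by \(k \in K_0\) in the fullness step by choosing the representative \(f\) with \(0 \leq f(0) \leq n\). It also leaves implicit the checks you make explicit, namely that \(\Phi\) restricts to the identity on \(\Delta\) and that \(\Phi\) identifies the generating pairs \((\tau_n^{n+1},1)\) and \((\id_n,z)\).
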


\begin{proof}
 We will show that \(\Phi\colon \Delta P \to \Delta G\) gives the required quotient presentation.
 First we show that \(\Phi\) is full.
 Let \(F \in \Delta G(m,n)\). Choose a representative \(f \in \Xi_{\infty}(m,n)\) of \(p(F) \in \Delta H(m,n)\) such that \(0 \leq f(0) \leq n\).
 Then we have
 \[ F \circ v^m_0 = v^n_{f(0)} \circ a \]
 for some \(a \in G_0\) with \(\eps(a) = \sgn(f)\), and thus \(F = \Phi(f, a)\).
 This shows that \(\Phi\) is full.

 It is left to characterize the kernel equivalence relation of \(\Phi\).
 We claim that
 \[ \Phi(f, a) = \Phi(g, b) \iff g = \tau_n^{(n+1)\ell} \circ f \quad \text{and} \quad b = z^{-\ell} \circ a \quad \text{for some } \ell \in \bZ.  \]
 Indeed, equality after applying \(p\) to the left side shows that \(f\) and \(g\) differ by a multiple of the full rotation \(\tau_n^{n+1}\).
 Their prescribed vertex values agree precisely when
 \[ v^n_{f(0)} \circ a = v^n_{g(0)} \circ b = v^n_{f(0)} \circ z^\ell \circ b \]
 which by uniqueness of this form, is equivalent to \(b = z^{-\ell} \circ a\).
 These identifications are precisely those generated by \((\tau_n^{n+1}, 1) \sim (\id_n, z)\) for \(n \geq 0\).
\end{proof}

\begin{corollary} \label{cor:induced-self-duality}
    The lifted order adjoint self-duality of \(\Delta P\) descends to a self-duality
    \[ (\Delta G)^\op \to \Delta G, \quad n \mapsto n, \quad \Phi(f, a) \mapsto \Phi(\widehat{L}(f), a^{-1}). \]
\end{corollary}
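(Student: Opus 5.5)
By \Cref{thm:extension-quotient}, \(\Phi\) identifies \(\Delta G\) with the quotient of \(\Delta P\) by the congruence generated by \((\tau_n^{n+1}, 1) \sim (\id_n, z)\). I will write \(D\colon (\Delta P)^\op \to \Delta P\), \((f,a) \mapsto (\widehat{L} f, a^{-1})\), for the lifted self-duality. The plan is to show that \(D\) respects this congruence. Then \(D\) descends to a functor \((\Delta G)^\op \to \Delta G\) given by the displayed formula. Doing the same for the inverse \((f,a) \mapsto (\widehat{R} f, a^{-1})\) produces a quasi-inverse, in fact a strict inverse on the quotient.

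First I check that \(D\) is a well-defined contravariant functor on \(\Delta P\). The condition \(\sgn(\widehat{L} f) = \eps(a^{-1})\) holds because \(\widehat{L}\) preserves \(\sgn\): we have \(\widehat{L}(\rho_n) = \tau_n \circ \rho_n\) and \(\widehat{L}(\Lambda_\infty) = \Lambda_\infty\), and \(\eps(a^{-1}) = \eps(a)\) in \(C_2\). Contravariance in the second component is the identity \((b a)^{-1} = a^{-1} b^{-1}\). Composition is covered by \Cref{prop:order-adjoint-paradihedral}.

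The main step is compatibility with the congruence. A congruence generated by relations between parallel morphisms is preserved by any functor that sends each generating pair to an identified pair, because the generated congruence is closed under composition on both sides. So it suffices to compare \(D(\tau_n^{n+1}, 1) = (\widehat{L}(\tau_n)^{n+1}, 1) = (\tau_n^{-(n+1)}, 1)\) with \(D(\id_n, z) = (\id_n, z^{-1})\). The kernel description in the proof of \Cref{thm:extension-quotient} gives \(\Phi(f,a) = \Phi(g,b)\) exactly when \(g = \tau_n^{(n+1)\ell} \circ f\) and \(b = z^{-\ell} a\) for some \(\ell\). Taking \(f = \tau_n^{-(n+1)}\), \(a = 1\) and \(\ell = 1\) gives \(g = \id_n\) and \(b = z^{-1}\). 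Hence the two images are identified, as required.

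The expected obstacle is the case \(\eps(a) = -1\). Here one might worry that conjugation by \(\rho\) inverts the rotation, and that \(z\) is a root of the relation \(aza^{-1} = z^{\eps(a)}\). However, the generating relation only involves the pair with \(a = 1\), so sign issues do not enter that comparison. They only affect closure under composition, and that is handled by the congruence argument above.

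Descent is also needed for the inverse. The same computation works with \(\widehat{R}(\tau_n) = \tau_n^{-1}\). The two descended functors are mutually inverse because \(D\) and its inverse are mutually inverse on \(\Delta P\) and \(\Phi\) is full. Finally, the descended functor is the identity on objects, so it is the asserted self-duality.
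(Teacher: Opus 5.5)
Your proof is correct and follows the paper's argument. The paper also checks only the generating relations, observing that \((\tau_n^{n+1},1)\) and \((\id_n,z)\) are sent to their inverses so the relation survives; you instead invoke the explicit kernel description from the proof of \Cref{thm:extension-quotient}. The paper then handles the inverse the same way, and your extra checks (\(\widehat{L}\) preserves \(\sgn\); the descended functors are mutually inverse by fullness of \(\Phi\)) supply details the paper leaves implicit.
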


\begin{proof}
    The self-duality on \(\Delta P\) maps
    \begin{align*}
        (\tau_n^{n+1}, 1) \mapsto (\tau_n^{n+1}, 1)^{-1} \quad \text{and} \quad (\id_n, z) \mapsto (\id_n, z)^{-1} 
    \end{align*}
    so it preserves the generating relations.
    Its inverse does likewise, so it descends to a self-duality on \(\Delta G\). 
\end{proof}

\printbibliography

\end{document}